\newcommand\inlinesum{\textstyle\sum\nolimits}
\def\publicationmode{}
    \newcommand\suppress[1]{} 
    \newcommand\antisuppress[1]{#1}
    \newcommand\suppress[1]{#1}
    \newcommand\antisuppress[1]{} 
\theoremstyle{plain}
\newtheorem{theorem}{Theorem}[section]
\newtheorem{lemma}[theorem]{Lemma}
\newtheorem{proposition}[theorem]{Proposition}
\newtheorem{corollary}[theorem]{Corollary}
\newtheorem*{theorem*}{Theorem}
\theoremstyle{definition}
\newtheorem{example}[theorem]{Example}
\newtheorem{remark}[theorem]{Remark}
\theoremstyle{plain}
\newtheorem*{proposition*}{Proposition}
\newcommand{\calB}{{\mathcal B}}
\newcommand{\calK}{{\mathcal K}}
\newcommand{\calL}{{\mathcal L}}
\newcommand{\calU}{{\mathcal U}}
\newcommand{\calV}{{\mathcal V}}
\newcommand{\calW}{{\mathcal W}}
\newcommand{\bbN}{{\mathbb N}}
\newcommand{\bbR}{{\mathbb R}}
\newcommand{\bbQ}{{\mathbb Q}}
\newcommand{\bbT}{{\mathbb T}}
\newcommand{\bbV}{{\mathbb V}}
\newcommand{\bbZ}{{\mathbb Z}}
\newcommand\suchthat{\@ifstar
  {\mathrel{}\middle\vert\mathrel{}}
  {\mid}}
\newcommand{\card}[1]{\left|#1\right|}
\newcommand{\func}{\delta}
\newcommand{\Mult}{M}
\newcommand{\Leb}{\calL}
\DeclareMathOperator{\dist}{\Delta}
\newcommand{\Dist}{\dist}
\newcommand{\equivalent}{\Longleftrightarrow}
\newcommand{\Ball}{\mathcal{B}}
\newcommand{\Indices}[2]{\{#1,\dots,#2\}}
\let\originalleft\left
\let\originalright\right
\renewcommand{\left}{\mathopen{}\mathclose\bgroup\originalleft}
\renewcommand{\right}{\aftergroup\egroup\originalright}
\newcommand{\extrapoint}{x_\star}
 \setlist{nosep} 
\begin{document}

\title
[On Lipschitz partitions of unity and the Assouad--Nagata dimension]
{On Lipschitz partitions of unity\\and the Assouad--Nagata dimension}

\author{Martin W.\ Licht}
\email{martin.licht@epfl}
\address{\'Ecole Polytechnique F\'ed\'erale de Lausanne (EPFL), 1005 Lausanne, Switzerland}
\subjclass[2000]{51F30,54E35,54F45}
\keywords{Assouad--Nagata dimension, Lebesgue number, Lipschitz analysis, partition of unity}

\maketitle

\begin{abstract}
    We show that the standard partition of unity subordinate to an open cover of a metric space 
    has Lipschitz constant $\max(1,\Mult-1)/\Leb$, 
    where $\Leb$ is the Lebesgue number and $\Mult$ is the multiplicity of the cover. 
    If the metric space satisfies the approximate midpoint property,
    as length spaces do, 
    then the upper bound improves to $(\Mult-1)/(2\Leb)$.
    These Lipschitz estimates are optimal. 
    We also address the Lipschitz analysis of $\ell^{p}$-generalizations of the standard partition of unity, their partial sums, and their categorical products.
    Lastly, we characterize metric spaces with Assouad--Nagata dimension $n$ as exactly those metric spaces
    for which every Lebesgue cover admits an open refinement with multiplicity $n+1$ 
    while reducing the Lebesgue number by at most a constant factor.
\end{abstract}

\section{Introduction}\label{section:introduction}

Partitions of unity subordinate to an open cover are fundamental in topology, geometry, and analysis.
In numerous applications where the underlying space is a metric space,
we are interested in partitions of unity that feature Lipschitz regularity and that have the same index set as the original open cover.
The primary goal of this work is finding \emph{optimal} bounds on those Lipschitz constants in terms of only two parameters of the open cover: 
the \emph{Lebesgue number} and the \emph{multiplicity}.
The following paraphrases our main result.

\begin{theorem*}
If $\calU = ( U_\alpha )_{ \alpha \in \kappa }$ is an open cover of a metric space $(X,\dist)$
with Lebesgue number $\Leb > 0$ and multiplicity $\Mult$,
then the standard partition of unity $( \lambda_{\alpha} )_{ \alpha \in \kappa }$ subordinate to $\calU$ satisfies
\begin{displaymath}
    \forall \alpha \in \kappa : \forall x,y \in X : 
    \left| \lambda_{\alpha}(x) - \lambda_{\alpha}(y) \right| \leq \max(1,\Mult-1) \Leb^{-1} \dist(x,y)
    . 
\end{displaymath}
If, additionally, $X$ has the approximate midpoint property, then  
\begin{displaymath}
    \forall \alpha \in \kappa : \forall x,y \in X : 
    \left| \lambda_{\alpha}(x) - \lambda_{\alpha}(y) \right| \leq \frac 1 2 (\Mult-1) \Leb^{-1} \dist(x,y)
    . 
\end{displaymath}
Under the given assumptions, these bounds are the best possible for the standard partition of unity.
The same bounds apply to arbitrary partial sums of this partition of unity. 
\end{theorem*}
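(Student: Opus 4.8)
I would phrase everything in terms of the $1$-Lipschitz functions $\phi_\alpha(x):=\dist(x,X\setminus U_\alpha)$, which are strictly positive on $U_\alpha$ and vanish elsewhere; the standard partition of unity is $\lambda_\alpha=\phi_\alpha/\Phi$ with $\Phi:=\inlinesum_\alpha\phi_\alpha$, the Lebesgue number gives $\Phi\ge\Leb$, and the multiplicity says that at most $\Mult$ of the $\phi_\alpha$ are nonzero at any point. Since for $J\subseteq\kappa$ the partial sum $\inlinesum_{\alpha\in J}\lambda_\alpha$ equals $\lambda_J:=\Phi_J/\Phi$ with $\Phi_J:=\inlinesum_{\alpha\in J}\phi_\alpha$, it suffices to prove both estimates for $\lambda_J$ with $J$ arbitrary. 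Fix $x,y$, set $d:=\dist(x,y)$, and — interchanging $x$ and $y$ if necessary — assume $\lambda_J(x)\ge\lambda_J(y)$, so $D:=\lambda_J(x)-\lambda_J(y)\ge 0$; if $D=0$ we are done, hence $\Phi_J(x)>0$. Let $m_J(x)$ be the number of $\alpha\in J$ with $\phi_\alpha(x)>0$, and similarly $m_{J^c}$; pointwise $m_J+m_{J^c}\le\Mult$.

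For the first bound I would argue by cases. (i) If $\Phi_J(y)=0$: then $D=\Phi_J(x)/\Phi(x)$, each of its $m_J(x)$ positive summands is at most $d$ because $y\notin U_\alpha$, and when $d<\Leb$ the Lebesgue number at $y$ forces some index of $J^c$ to be active at $x$, so $m_J(x)\le\Mult-1$ and $D\le(\Mult-1)d/\Leb$; when $d\ge\Leb$ simply $D\le 1\le d/\Leb$, which is the only place the term $1$ in $\max(1,\Mult-1)$ is used. (ii) If $\Phi_J(y)>0$ and $\Phi_{J^c}(x)=0$: then $\lambda_J(x)=1$ and $D=\Phi_{J^c}(y)/\Phi(y)$ is a sum of at most $\Mult-1$ terms (as $\Phi_J(y)>0$), each at most $d$, so $D\le(\Mult-1)d/\Leb$. (iii) If $\Phi_J(y)>0$ and $\Phi_{J^c}(x)>0$ (whence $m_J(x)\le\Mult-1$): assuming $\Phi_J(x)\ge m_J(x)\,d$ — otherwise $D\le\lambda_J(x)<(\Mult-1)d/\Leb$ already — the $1$-Lipschitz bounds give $\Phi_J(y)\ge\Phi_J(x)-m_J(x)\,d$ and $\Phi_{J^c}(y)\le\Phi_{J^c}(x)+(\Mult-1)\,d$, so with $a=\Phi_J(x)$, $b=\Phi_{J^c}(x)$, $c=m_J(x)$ one is led to
\begin{displaymath}
D\;\le\;\frac{a}{a+b}-\frac{a-c\,d}{a-c\,d+b+(\Mult-1)\,d}\;=\;\frac{d\,\bigl((\Mult-1)a+cb\bigr)}{(a+b)\,\bigl(a+b+(\Mult-1-c)\,d\bigr)}\;\le\;\frac{(\Mult-1)\,d}{\Leb},
\end{displaymath}
the last step using $a+b=\Phi(x)\ge\Leb$, $\Mult-1-c\ge 0$, and the trivial inequality $(\Mult-1-c)\bigl(b+(\Mult-1)d\bigr)\ge 0$.

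For the second bound the crucial extra input is that the approximate midpoint property forces $\Phi\ge 2\Leb$. I would prove this by fixing $x$, choosing $\beta_0$ with $\phi_{\beta_0}(x)=:r$ maximal (done if $r\ge 2\Leb$), picking $z\notin U_{\beta_0}$ with $\dist(x,z)<r+\epsilon$, and iterating approximate midpoints to obtain $w$ with $\dist(x,w)<r-\Leb+\epsilon$ and $\dist(w,z)<\Leb$; then $\phi_{\beta_0}(w)\le\dist(w,z)<\Leb$, so some $\gamma\ne\beta_0$ has $\phi_\gamma(w)\ge\Leb$, whence $\phi_\gamma(x)>2\Leb-r-\epsilon$ and $\Phi(x)\ge\phi_{\beta_0}(x)+\phi_\gamma(x)>2\Leb-\epsilon$; letting $\epsilon\downarrow 0$ gives the claim. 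The same property lets one subdivide any pair $x,y$ into a chain whose successive gaps are all below $\Leb$ and whose total length is as close to $\dist(x,y)$ as desired; telescoping $\lambda_J$ along it reduces the second bound to the case $d<\Leb$. But for $d<\Leb$ the case analysis above goes through verbatim with $\Phi\ge\Leb$ replaced everywhere by $\Phi\ge 2\Leb$ — in case (i), $d<\Leb$ is now automatic and $m_J(x)\le\Mult-1$ with it — and the final estimate then reads $D\le(\Mult-1)d/(2\Leb)$, as required.

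For optimality I would exhibit covers attaining the constants: for $\Mult=1$, the two singletons of a two-point space make $\lambda_1$ an indicator with ratio exactly $1/\Leb$; for $\Mult\ge 2$, a space with no approximate midpoints (a suitable metric graph) carrying a cover in which $x$ lies in $\Mult$ of the sets, with $\phi_\alpha(x)=\Leb$ for one of them and the other $\Mult-1$ values arbitrarily small, turns the displayed estimate into an equality and drives $|\lambda_\alpha(x)-\lambda_\alpha(y)|/\dist(x,y)$ to $(\Mult-1)/\Leb$; and on $\bbR$ the cover $\{(-\Leb,\infty),(-\infty,\Leb)\}$ makes $\lambda_1$ affine of slope $1/(2\Leb)$ on $[-\Leb,\Leb]$, with $\Mult$-fold analogues on geodesic spaces achieving $(\Mult-1)/(2\Leb)$. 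The step I expect to be hardest is the length-space part: converting the merely approximate, purely local midpoint hypothesis into genuine near-geodesics — both for $\Phi\ge 2\Leb$ and for the reduction to small $d$ — and making sure the would-be extremal configurations, $\lambda_J(x)=1$ with $\Phi(x)$ close to $\Leb$, are genuinely excluded by $\Phi\ge 2\Leb$ rather than only by the weaker $\Phi\ge\Leb$.
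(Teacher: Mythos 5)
Your proof is correct, and it rests on the same two pillars as the paper's: the $1$-Lipschitz distance functions $\func_\alpha$ with $\inlinesum_\alpha\func_\alpha\ge\Leb$, improved to $\inlinesum_\alpha\func_\alpha\ge 2\Leb$ under the approximate midpoint property. The execution, however, differs in three places. First, the core estimate: the paper (Theorem~\ref{theorem:mainresult}, which treats all $\ell^p$-type normalizations at once) expands $\lambda_\mu(x)-\lambda_\mu(y)$ via an algebraic identity whose numerator involves only the partial sums $\func_\mu$ and $\func_{\kappa\setminus\mu}$, and then counts that at most $\Mult-1$ indices can move adversely; your three-case analysis with the monotonicity of $t\mapsto t/(t+s)$ and the explicit quotient bound is a more elementary route that works for $p=1$, which is all the stated theorem needs. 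Second, large distances: the paper disposes of $\dist(x,y)\ge 2\Leb$ by $|\lambda_\mu|\le 1$ and treats $\dist(x,y)<2\Leb$ directly, whereas you chain and telescope along fine approximate discrete paths to reduce to $\dist(x,y)<\Leb$; this is valid, but it quietly uses the full strength of Lemma~\ref{lemma:APMimpliespaths}, which you would indeed recover by iterating midpoints as you indicate. Third, the normalizer bound: your direct construction (walk from $x$ toward a nearest point outside the best set, stop at distance just under $\Leb$ from it, and pull the second large distance function back to $x$) gives $\inlinesum_\alpha\func_\alpha\ge 2\Leb$, which suffices for $p=1$; the paper's Lemma~\ref{lemma:estimateforlambdasum} proves the slightly stronger fact that the two largest values already sum to $2\Leb_\ast$ (needed for $p>1$) via a contradiction with the optimal Lebesgue number --- the same idea in different packaging, and yours is arguably more transparent. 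Two minor caveats: you tacitly exclude the degenerate case $U_\alpha=X$, where $\func_\alpha\equiv\infty$, which the paper settles in one sentence; and your optimality witness for $\Mult\ge 2$ without the midpoint property is only gestured at (``a suitable metric graph''), while the paper's concrete example $X=[-\Leb,\Leb]\cup\{\extrapoint\}$ with $\dist(\cdot,\extrapoint)=\Leb$, $U_1=[-\Leb,\Leb]$ and $U_2=\dots=U_\Mult=(0,\Leb]\cup\{\extrapoint\}$ realizes exactly the configuration you describe; your example on $\bbR$ for the midpoint case coincides with the paper's.
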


We now give a short overview of the literature and provide further background.
It is easily seen that any open cover of a metric space has a subordinate partition of unity whose member functions are locally Lipschitz~\cite[Theorem~5.3]{luukkainen1977elements}.
In fact, there exist partitions of unity whose members are Lipschitz; see~\cite{cobzacs2019lipschitz} and the references therein~\cite{robinson1983uniformly,fried1984open,frolik1984existence}.
However, stronger assumptions are needed if we want the partition of unity to have the same index set as the original open cover and the Lipschitz constants to be uniformly bounded. 
We address quantitative bounds that depend on the multiplicity $\Mult$ and a Lebesgue number $\Leb$ of the cover. 
To the author's best knowledge, $\Mult/\Leb$ has been the best bound for the Lipschitz constant of the standard partition of unity subordinate to the cover as reported in the literature so far~\cite{buyalo2008hyperbolic};
see also, e.g.,~\cite{bell2003property,dadarlat2007uniform,duan2010property,dydak2016large,xia2019strong}.
Here, the \emph{canonical} or \emph{standard} partition of unity is the one defined in terms of distance functions.
We show that the Lipschitz constant of that partition of unity is at most $\max(1,\Mult-1)/\Leb$.
If the metric space satisfies the approximate midpoint property~\cite{aviles2019complete}, 
which means it is a dense subset of a complete length space, 
then this further improves to $(\Mult-1)/(2\Leb)$.
Recall that the standard partition of unity is modeled on $\ell^{1}$ normalization. 
Partitions of unity modeled on more general $\ell^{p}$ normalization are of interest in settings of higher smoothness. 
We estimate their Lipschitz constants as well.

Bounding the Lipschitz constants of a partition of unity subordinate to an open cover of a metric space is of intrinsic interest and has broad utility in analysis. Furthermore, the vector of values $\Lambda$ of the partition of unity, which is the categorical product of the functions and which we henceforth call \emph{vectorization}, describes the barycentric coordinates of any point in the nerve complex associated to any pointwise finite open cover~\cite{bell2003property,dydak2003partitions,dadarlat2007uniform,dranishnikov2009cohomological,cencelj2013asymptotic,austin2014partitions}.
Using a new identity, we bound the Lipschitz constant of the vectorization with respect to the $\ell^q$-metric
by $\sqrt[q]{2} \max(1,\Mult-1) / \Leb$, or even by $\sqrt[q]{2} (\Mult-1) / (2\Leb)$ if the approximate midpoint property holds, 
which further improves the bound $\sqrt[q]{2\Mult}(\Mult+1)/\Leb$ reported previously~\cite{buyalo2008hyperbolic}.

The upper bounds for the Lipschitz constant depend on the multiplicity $\Mult$ of the cover. 
We therefore wish to address the following additional question: 
how can we modify any open cover to reduce the multiplicity while not decreasing the Lebesgue number too much? 
Specifically, we seek a decrease of the Lebesgue number by merely a constant factor that depends only on the metric space itself. 
We connect this to the following characterization of the Assouad--Nagata dimension:
a metric space has \emph{Assouad--Nagata dimension at most $n$}
if and only if there exists a constant $c > 0$ such that every open cover with some Lebesgue number $\Leb$
has an open refinement with Lebesgue number $c \Leb$ and multiplicity $n+1$.
For subsets of Euclidean space, we estimate $c \geq 1/\left( \sqrt{8n}(n+1) \right)$ using thick triangulations.
This characterization of the Assouad--Nagata dimension, albeit so far not widely discussed, 
is a very natural analogue to known characterizations of the asymptotic dimension~\cite{lang2005nagata,bell2008asymptotic,le2015assouad,dydak2016large}
and of general interest. 

The remainder of this manuscript is structured as follows.
Section~\ref{section:topology} introduces definitions and discusses partitions of unity and their vectorization within a topological setting. 
Section~\ref{section:lipschitz} introduces the metric setting and our quantitative main results. 
Lastly, Section~\ref{section:nagata} discusses aspects of the Assouad--Nagata dimension.

\section{Partitions of unity over topological spaces}\label{section:topology}

We commence our discussion within a purely topological setting.
Let $X$ be any topological space.
An \emph{open cover} of $X$ is a family $\calU = ( U_{\alpha} )_{\alpha \in \kappa}$, indexed over some set $\kappa$, of open subsets of $X$ whose union equals $X$. 
We call an open cover $\calU$ \emph{pointwise finite} if every $x \in X$ is contained in at most finitely-many members of $\calU$,
and we call $\calU$ \emph{locally finite} if every $x \in X$ has an open neighborhood that intersects at most finitely-many members of $\calU$.
We say that $\calU$ has \emph{multiplicity} $\Mult$ if each point $x \in X$ is included in at most $\Mult$ distinct members of $\calU$.

A \emph{partition of unity over $X$} is a family of non-negative continuous functions $\Lambda = ( \lambda_{\alpha} )_{\alpha \in \kappa}$, indexed over some set $\kappa$, that are pointwise almost always zero and which sum up to one everywhere, that is, 
\begin{gather*}
    \forall x \in X : \card{\{ \alpha \in \kappa \suchthat \lambda_{\alpha}(x) > 0 \}} < \infty,
    \qquad
    \forall x \in X : \inlinesum_{\alpha \in \kappa} \lambda_{\alpha}(x) = 1.
\end{gather*}
We call $\Lambda$ \emph{locally finite} if every point $x \in X$ has an open neighborhood over which only finitely many members of $\Lambda$ are not identically zero. 
We call $\Lambda$ \emph{locally vanishing} if for each $x \in X$ and $\epsilon > 0$ there exists an open neighborhood $V_x$ of $x$ such that all but finitely many members of $\Lambda$ are strictly less than $\epsilon$ over $V_x$.

We say that the partition of unity $\Lambda$ is \emph{weakly subordinate} to some cover $\calU$ 
if 
\begin{gather*}
    \forall \lambda \in \Lambda : \exists U \in \calU : 
    \{ x \in X \suchthat \lambda(x) > 0 \} \subseteq U.
\end{gather*}
We say that the partition of unity $\Lambda = ( \lambda_{\alpha} )_{\alpha \in \kappa}$ is \emph{subordinate}, or \emph{strongly subordinate}, to some cover $\calU = ( U_{\alpha} )_{\alpha \in \kappa}$ provided that 
\begin{gather*}
    \forall \alpha \in \kappa : 
    \{ x \in X \suchthat \lambda_{\alpha}(x) > 0 \} \subseteq U_{\alpha}.
\end{gather*}
Every partition of unity $( \lambda_{\alpha} )_{\alpha \in \kappa}$ is (strongly) subordinate to its \emph{induced} open cover, which is the cover given by $(\{ x \in X \suchthat \lambda_{\alpha}(x) > 0 \})_{\alpha \in \kappa}$.
We are interested in partial sums of partitions of unity. 
Given any arbitrary subset of the index set $\mu \subseteq \kappa$, we introduce the partial sum 
\begin{gather*}
    \lambda_{\mu}(x) := \inlinesum_{ \alpha \in \mu } \lambda_{\alpha}(x)
    . 
\end{gather*}
\begin{remark}
    The literature generally uses the above two notions of \emph{partition of unity subordinate to a cover} interchangeably. 
    Indeed, the distinction between weakly subordinate and (strongly) subordinate partitions of unity is a mere technicality for the purposes of topology: 
    if, say, some locally finite continuous partition of unity is weakly subordinate to an open cover, then taking partial sums yields a continuous partition of unity strongly subordinate to that open cover. 
    However, the distinction matters once quantitative properties are our concern.     
    Lastly, an even stricter notion of partition of unity subordinate to an open cover requires that the support of each function, a closed set, be included in a member of the open cover. However, this notion is not our main interest.
\end{remark}

\begin{example}
    Consider the metric Hedgehog space $X$ with $\kappa$-many spines, glued at the common zero.
    We think of $X$ as $\kappa$-many intervals $[0,1]_\alpha$, $\alpha \in \kappa$, that share the origin,
    and carrying the topology induced by the taxi cab metric.
    We introduce a continuous function $\lambda_{\alpha} : X \to [0,1]$ with support in the spine $[0,1]_\alpha$.
    Together with $\lambda_0(x) = 1 - \sum_{\alpha\in\kappa} \lambda_{\alpha}(x)$,
    we obtain a partition of unity $\Lambda$; it is generally neither locally vanishing nor even locally finite.
    If we pick $\lambda_{\alpha}(x) = x$ for every $x \in [0,1]_\alpha$,
    then $\Lambda$ is locally vanishing but not locally finite.
    Lastly, one easily finds a locally finite partition of unity; details are left to the reader.
\end{example}

While the partition of unity is a family of functions, 
we can also reinterpret it as a single vector-valued function. 
With some minor abuse of notation, we introduce the \emph{vectorization} of
$\Lambda = (\lambda_{\alpha})_{\alpha\in\kappa}$ as the function 
\begin{gather*}
    \Lambda : X \to \bbR^{\kappa}, \quad x \mapsto ( \lambda_{\alpha}(x) )_{\alpha \in \kappa}.
\end{gather*}
For every $x \in X$, the vector $\Lambda(x)$ has almost all entries equal to zero. 

The question of whether $\Lambda$ is continuous arises whenever $\bbR^{\kappa}$ carries a topology. 
We can equip $\bbR^{\kappa}$ with the product topology, which is defined as the topology of pointwise convergence of sequences.
Another topology on $\bbR^{\kappa}$ is the uniform topology: as the name suggests, a sequence converges in that topology if it converges pointwise and that pointwise convergence is uniform.
The set $\bbR^{\kappa}$ can also be equipped with the box topology:
a sequence $(x_i)_{i \in \bbN}$ converges in the box topology if and only if it converges uniformly and if, additionally, 
there is a finite subset $\mu \subseteq \kappa$ and $N \in \bbN$ such that $x_{N,\alpha} = x_{i,\alpha}$ for all $\alpha \in \kappa\setminus\mu$ and $i > N$. 
The box topology is generally finer than the uniform topology, which in turn is generally finer than the product topology;
they coincide if $\kappa$ is finite.

We characterize these topologies on $\bbR^\kappa$ via bases for their open sets,
beginning with the finest topology. 
A basis for the box topology is given by the sets of the form $\prod_{\alpha \in \kappa} (x_\alpha-\epsilon_\alpha,x_\alpha+\epsilon_\alpha)$, where $x = (x_\alpha)_{\alpha\in\kappa} \in \bbR^\kappa$ and $\epsilon = (\epsilon_\alpha)_{\alpha\in\kappa} \in (0,1)^\kappa$.
A basis for the uniform topology is given by the sets of the form $\prod_{\alpha \in \kappa} (x_\alpha-\epsilon,x_\alpha+\epsilon)$ where $x = (x_\alpha)_{\alpha\in\kappa} \in \bbR^\kappa$ and $\epsilon > 0$. 
Lastly, the product sets $\prod_{\alpha \in \kappa} I_\alpha$ where all but finitely many of the open $I_\alpha \subseteq \bbR$ are all of $\bbR$ define a basis of the open sets in the product topology.

\begin{proposition}
    Let $\Lambda=(\lambda_{\alpha})_{\alpha \in \kappa}$ be a partition of unity over a topological space $X$.
    Let $\Lambda : X \to \bbR^\kappa$ be its vectorization. 
    Then $\Lambda$ is continuous 
if $\bbR^{\kappa}$ carries the product topology,
or if $\bbR^{\kappa}$ carries the uniform topology and $\Lambda$ is locally vanishing,
or if $\bbR^{\kappa}$ carries the box topology and $\Lambda$ is locally finite.
\end{proposition}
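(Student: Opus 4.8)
The plan is to verify continuity directly against the bases for the open sets of $\bbR^\kappa$ displayed above, proceeding from the coarsest topology to the finest and isolating the one mechanism that drives the last two cases.

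For the product topology, nothing beyond the definition of a partition of unity is needed: a map into a product space is continuous exactly when all of its components are, and the $\alpha$-th component of $\Lambda$ is $\lambda_\alpha$, which is continuous by assumption. Equivalently, the $\Lambda$-preimage of a basic product-open set $\prod_{\alpha\in\kappa} I_\alpha$, in which $I_\alpha = \bbR$ for all but finitely many $\alpha$, is the finite intersection $\bigcap_{\alpha : I_\alpha \neq \bbR} \lambda_\alpha^{-1}(I_\alpha)$ and hence open.

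For the remaining two cases I would fix $x \in X$ and build a suitable open neighborhood $V \ni x$ by intersecting exactly two kinds of ingredient: one open set, furnished by the local hypothesis, on which a \emph{cofinite} subfamily of the $\lambda_\alpha$ is negligible near $x$; and \emph{finitely many} further open sets, one per remaining index, furnished by continuity of the individual functions $\lambda_\alpha$. Because only finitely many sets are intersected, $V$ is open. For the box topology, take a basic neighborhood $\prod_\alpha(\lambda_\alpha(x) - \epsilon_\alpha, \lambda_\alpha(x) + \epsilon_\alpha)$ of $\Lambda(x)$ with $\epsilon_\alpha \in (0,1)$ and note that these sets form a neighborhood basis at $\Lambda(x)$; local finiteness of $\Lambda$ produces an open $V_0 \ni x$ and a finite $\mu \subseteq \kappa$ with $\lambda_\alpha \equiv 0$ on $V_0$ for every $\alpha \notin \mu$, so for those indices $\lambda_\alpha(x) = 0$ and the $\alpha$-th constraint is already met on all of $V_0$; for each of the finitely many $\alpha \in \mu$, continuity of $\lambda_\alpha$ yields an open $V_\alpha \ni x$ with $|\lambda_\alpha(\cdot) - \lambda_\alpha(x)| < \epsilon_\alpha$ on $V_\alpha$, and then $V := V_0 \cap \bigcap_{\alpha\in\mu} V_\alpha$ maps into the chosen box. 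For the uniform topology, one reduces as usual to producing, for a given $\epsilon \in (0,1)$, an open $V \ni x$ on which $\sup_{\alpha\in\kappa}|\lambda_\alpha(y) - \lambda_\alpha(x)| < \epsilon$; here I would apply the locally vanishing hypothesis with tolerance $\epsilon/2$ to get $V_0 \ni x$ and a finite $\mu$ with $\lambda_\alpha < \epsilon/2$ on $V_0$ for $\alpha \notin \mu$, so that $\lambda_\alpha(x)$ and $\lambda_\alpha(y)$ both lie in $[0, \epsilon/2)$ and hence $|\lambda_\alpha(y) - \lambda_\alpha(x)| < \epsilon/2$ there, and then intersect $V_0$ with the finitely many continuity neighborhoods of the $\lambda_\alpha$, $\alpha \in \mu$, each chosen to force $|\lambda_\alpha(\cdot) - \lambda_\alpha(x)| < \epsilon/2$; on the resulting $V$ the supremum is at most $\epsilon/2 < \epsilon$.

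None of the individual steps is hard; the care required is bookkeeping. The point to watch is the difference between the two local hypotheses: under local finiteness the cofinitely-many offending functions vanish identically near $x$, so their coordinate constraints cost nothing, whereas under mere local vanishing they are only small, which forces the split $\epsilon = \epsilon/2 + \epsilon/2$ and the supremum formulation of the target. If there is an obstacle at all, it is the temptation to run a single uniform argument for all three topologies; the proof must genuinely exploit the different shapes of their basic open sets.
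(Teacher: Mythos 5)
Your proposal is correct and follows essentially the same route as the paper: product-topology continuity via componentwise continuity, and for the box and uniform cases a basic neighborhood of $\Lambda(x)$ handled by intersecting the neighborhood from the local finiteness (resp.\ locally vanishing) hypothesis with finitely many continuity neighborhoods of the $\lambda_\alpha$, $\alpha\in\mu$. Your $\epsilon/2$ split in the uniform case is a harmless refinement (and in fact covers the metric-ball description of that topology), but it does not change the argument.
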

\begin{proof}
    The function $\Lambda$ is continuous with respect to the product topology, 
    as is easily seen by the universal property of the product topology. 

    Let $U$ be an open subset of $\bbR^{\kappa}$ in the uniform topology
    and let $x \in X$ with $\Lambda(x) \in U$. Then $\Lambda(x)$ has an open neighborhood 
    $U_x = \prod_{\alpha\in\kappa} ( \lambda_{\alpha}(x)-\epsilon,\lambda_{\alpha}(x)+\epsilon)$ for some $\epsilon \in (0,1)$
    satisfying $U_x \subseteq U$.
    There exists a finite subset $\mu \subseteq \kappa$ such that $\func_\beta(x) = 0$ for all $\beta \notin \mu$. 
    If $\Lambda$ is locally vanishing, 
    then $x$ has an open neighborhood $V_x$ such that 
    \begin{gather*}
        \forall \beta \in \kappa\setminus\mu : 
        \forall y \in V_x : 
        \lambda_\beta(y) < \epsilon,
        \qquad 
        \forall \alpha \in \mu : 
        \forall y \in V_x : 
        | \lambda_{\alpha}(y) - \lambda_{\alpha}(x) | < \epsilon,
    \end{gather*}
    and therefore $\Lambda(V_x) \subseteq U_x$. 
    Since $\Lambda^{-1}(U) = \cup_{ x \in \Lambda^{-1}(U) } V_x$,
    we see that $\Lambda$ is continuous if $\Lambda$ is locally vanishing and $\bbR^{\kappa}$ carries the uniform topology.

    Let $U$ be an open subset of $\bbR^{\kappa}$ in the box topology
    and let $x \in X$ with $\Lambda(x) \in U$. Then $\Lambda(x)$ has an open neighborhood 
    $U_x = \prod_{\alpha\in\kappa} ( \lambda_{\alpha}(x)-\epsilon_\alpha,\lambda_{\alpha}(x)+\epsilon_\alpha)$ for some $\epsilon_\alpha \in (0,1)$, $\alpha \in \kappa$, satisfying $U_x \subseteq U$.
    If $\Lambda$ is locally finite, 
    then $x$ has an open neighborhood $V_x$ and there exists $\mu \subseteq \kappa$ finite such that 
    \begin{gather*}
        \forall \beta \in \kappa\setminus\mu : y \in V_x : \lambda_\beta(y) = 0,
        \qquad 
        \forall \alpha \in \mu : y \in V_x : | \lambda_{\alpha}(y) - \lambda_{\alpha}(x) | < \epsilon_{\alpha},
    \end{gather*}
    and therefore $\Lambda(V_x) \subseteq U_x$. 
By an argument analogous to the above, we see that $\Lambda$ is continuous if $\Lambda$ is locally finite and $\bbR^{\kappa}$ carries the box topology.
\end{proof}

The set $\bbR^{\kappa}$ seems to be an excessively large choice of codomain for $\Lambda$,
since the latter's values have all but a finite number of entries zero. 
We let $\bbR^{(\kappa)}$ be the subset of those members of $\bbR^\kappa$ whose entries are almost all zero. 
To round up the discussion, 
we also introduce the set $\bbR^{[\kappa]}$ of those members of $\bbR^{\kappa}$ whose entries are uniformly bounded.
Clearly, $\bbR^{(\kappa)} \subseteq \bbR^{[\kappa]} \subseteq \bbR^{\kappa}$.
These sets carry vector space structures. 
Let us review the interactions of these vector space structures with the different topologies.

\begin{proposition}
    The vector spaces $\bbR^\kappa$, $\bbR^{[\kappa]}$, and $\bbR^{(\kappa)}$ are topological vector spaces
    when carrying the product topology, the uniform topology, and the box topology, respectively. 
\end{proposition}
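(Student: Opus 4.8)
The plan is simply to check, in each of the three cases, that vector addition and scalar multiplication are continuous; the three topologies are tailored to the three ambient vector spaces precisely so that this works, and all three spaces are Hausdorff (the product and uniform topologies obviously, the box topology because it refines the product topology), so the Hausdorff axiom, if one includes it in the definition, is automatic. First, $\bbR^{\kappa}$ with the product topology is nothing but the topological product of $\kappa$ copies of the topological vector space $\bbR$, hence a topological vector space: by the universal property of the product topology a map into $\bbR^{\kappa}$ is continuous if and only if all of its coordinate maps are, and applying this to $+\colon\bbR^{\kappa}\times\bbR^{\kappa}\to\bbR^{\kappa}$ and to $\cdot\colon\bbR\times\bbR^{\kappa}\to\bbR^{\kappa}$ reduces everything to continuity of $(x,y)\mapsto x_{\alpha}+y_{\alpha}$ and $(t,x)\mapsto tx_{\alpha}$ for each fixed $\alpha$, which follows from continuity of the coordinate projections and of the field operations of $\bbR$.

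For $\bbR^{[\kappa]}$ with the uniform topology, one observes that every basic uniform neighborhood $\prod_{\alpha\in\kappa}(x_{\alpha}-\epsilon,x_{\alpha}+\epsilon)\cap\bbR^{[\kappa]}$ of a point contains, and is contained in, a ball about that point in the supremum norm $\|x\|_{\infty}=\sup_{\alpha\in\kappa}|x_{\alpha}|$ (which is finite on $\bbR^{[\kappa]}$); hence the uniform topology on $\bbR^{[\kappa]}$ is exactly the norm topology of $\ell^{\infty}(\kappa)$, which, being a Banach space, is a topological vector space. Should one prefer a self-contained verification, the triangle inequality handles addition, and the estimate $\|ty-t_{0}x_{0}\|_{\infty}\le|t-t_{0}|\,\|y\|_{\infty}+|t_{0}|\,\|y-x_{0}\|_{\infty}$ --- valid because every element of $\bbR^{[\kappa]}$ is uniformly bounded --- handles scalar multiplication.

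The substantive case is $\bbR^{(\kappa)}$ with the box topology, which is where finiteness of supports is essential: on all of $\bbR^{\kappa}$ the box topology does \emph{not} make scalar multiplication continuous, since scalar multiplication along the all-ones family $(1)_{\alpha\in\kappa}$ is discontinuous at the origin. Continuity of addition is routine: given a basic box neighborhood of $x+y$, the product of the half-size boxes around $x$ and around $y$ maps into it. For scalar multiplication, fix $(t_{0},x_{0})$, set $F=\supp x_{0}$ (finite), and let $N=\prod_{\alpha\in\kappa}(t_{0}x_{0,\alpha}-\epsilon_{\alpha},t_{0}x_{0,\alpha}+\epsilon_{\alpha})\cap\bbR^{(\kappa)}$ be a basic neighborhood of $t_{0}x_{0}$. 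I would split the coordinates along $F$: on the finite set $F$ the map $\bbR\times\bbR^{F}\to\bbR^{F}$, $(t,(y_{\alpha})_{\alpha\in F})\mapsto(ty_{\alpha})_{\alpha\in F}$, is continuous, so a single radius $\delta'>0$ controls all of those coordinates; for $\alpha\notin F$ one has $x_{0,\alpha}=0$ and $t_{0}x_{0,\alpha}=0$, and choosing the radius $\delta_{\alpha}=\epsilon_{\alpha}/(|t_{0}|+1)$ there, together with the restriction $|t-t_{0}|<1$, forces $|ty_{\alpha}|\le(|t_{0}|+1)|y_{\alpha}|<\epsilon_{\alpha}$ whenever $|y_{\alpha}|<\delta_{\alpha}$. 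With $\delta=\min(1,\delta')$ and the box neighborhood of $x_{0}$ of radius $\delta'$ on $F$ and radius $\delta_{\alpha}$ off $F$, the resulting neighborhood of $(t_{0},x_{0})$ maps into $N$; note that no separate treatment of $t_{0}=0$ is required, since $\delta_{\alpha}=\epsilon_{\alpha}/(|t_{0}|+1)$ is legitimate for every $t_{0}$. The main obstacle is exactly this last point: arranging that the infinitely many coordinates outside $\supp x_{0}$ --- where a nearby family may be small but nonzero --- are not blown up by the scalar, which is handled by bounding $|t|$ near $t_{0}$ and using that the box neighborhood already pins those coordinates below $\delta_{\alpha}$.
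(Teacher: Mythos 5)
Your proof is correct and takes essentially the same route as the paper: the product case via the universal property, addition via half-size boxes, and scalar multiplication by bounding the scalar near $t_{0}$ while exploiting uniform boundedness in $\bbR^{[\kappa]}$ and finiteness of support in $\bbR^{(\kappa)}$. Your identification of the uniform topology on $\bbR^{[\kappa]}$ with the $\ell^{\infty}(\kappa)$-norm topology is precisely the alternative the paper itself flags in a footnote, so no substantive difference remains.
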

\begin{proof}
    Let $\bbR^\kappa$ carry the product topology. 
    The addition in $\bbR^\kappa$ is the $\kappa$-fold product of the addition of $\bbR$ and is thus continuous.
    Similarly, multiplying scalars and vectors is continuous. 
    Hence, $\bbR^{\kappa}$ with the product topology is a topological vector space. 

    Consider any neighborhood $U_z$ of $z \in \bbR^{\kappa}$ that takes the form \suppress{\begin{gather*} U_z = \textstyle\prod\nolimits_\alpha (z_\alpha - \epsilon_\alpha, z_\alpha + \epsilon_\alpha) \end{gather*}}
    \antisuppress{$U_z = \textstyle\prod\nolimits_\alpha (z_\alpha - \epsilon_\alpha, z_\alpha + \epsilon_\alpha)$}
    where $\epsilon_\alpha \in (0,1)$. 
    Let $x,y \in \bbR^{\kappa}$ with $x + y = z$ with respective neighborhoods
    \begin{gather*}
        U_x = \textstyle\prod\nolimits_\alpha (x_\alpha - \epsilon_\alpha/2, x_\alpha + \epsilon_\alpha/2),
        \quad 
        U_y = \textstyle\prod\nolimits_\alpha (y_\alpha - \epsilon_\alpha/2, y_\alpha + \epsilon_\alpha/2). 
    \end{gather*}
    By construction, $U_x + U_y \subseteq U_z$. 
    If $U$ is an open set in the box topology over $\bbR^\kappa$,
    then every $z \in U$ has got an open neighborhood $U_z \subseteq U$ of the form above. 
    Since $U_x$ and $U_y$ are open in the box topology,
    we conclude that addition in $\bbR^\kappa$ is continuous with respect to the box topology. 
    Additionally,
    if $U$ is an open set in the uniform topology, 
    then $z \in U$ has got an open neighborhood $U_z \subseteq U$ of the form above where uniformly $\epsilon_\alpha = \epsilon \in (0,1)$, and we see that $U_x$ and $U_y$ are open in the uniform topology.
    It follows that addition in $\bbR^\kappa$ is continuous with respect to the uniform topology.

    Let $w,z \in \bbR^{\kappa}$ and $\theta \in \bbR$ with $\theta w = z$.
    Let $U_z$ be a neighborhood of $z$ as above. 
    We set
    \begin{gather*} 
        U_\theta = ( \theta - \epsilon', \theta + \epsilon' ),
        \quad 
        U_w = \textstyle\prod\nolimits_\alpha ( w_\alpha - \epsilon'_\alpha, w_\alpha + \epsilon'_\alpha )
    \end{gather*}
    for some choices of $\epsilon', \epsilon'_\alpha \in (0,1)$.
    Evidently, $U_\theta U_w \subseteq U_z$ if for all $\alpha \in \kappa$ we have 
    \begin{gather*}
        \epsilon' |w_\alpha| 
        +
        \theta \epsilon'_\alpha
        +
        \epsilon' \epsilon'_\alpha
        <
        \epsilon_\alpha
        .
    \end{gather*}
    We consider two cases. 
    First, suppose that $U \subseteq \bbR^{[\kappa]}$ is an open set in the uniform topology of $\bbR^{[\kappa]}$,
    so that every $z \in U$ has a neighborhood $U_z \subseteq U$ of the form above 
    where $\epsilon_\alpha = \epsilon$ is constant.
    We pick $\epsilon' = \epsilon'_\alpha > 0$ so small that 
    \antisuppress{$\epsilon' |w_\alpha| + \theta \epsilon' + \epsilon' \epsilon' < \epsilon_\alpha$.}
    \suppress{\begin{gather*}
        \epsilon' |w_\alpha| 
        +
        \theta \epsilon'
        +
        \epsilon' \epsilon'
        <
        \epsilon_\alpha
        .
    \end{gather*}}
    Then $U_w$ is open in the uniform topology and $U_\theta U_w \subseteq U_z$.
    So scalar multiplication is continuous when $\bbR^{[\kappa]}$ carries the uniform topology.\footnote{Alternatively, we can also equip with $\bbR^{[\kappa]}$ with the supremum norm and verify that this Banach space induces the uniform topology.}

    Second, suppose that $U \subseteq \bbR^{(\kappa)}$ is an open set in the box topology of $\bbR^{(\kappa)}$,
    and let $z \in U$. 
    Then $z$ has a neighborhood $U_z \subseteq U$ of the form above,
    where $\epsilon_\alpha > 0$ is arbitrary again.
    There exists a finite $\mu \subseteq \kappa$ such that $w$ and $z$ have vanishing entries outside of $\mu$.
Choosing 
    \begin{gather*}
        \epsilon' 
        < 
        \frac 1 3 
        \cdot 
        \frac{ \textstyle\min\nolimits_{ \alpha \in \mu } \epsilon_\alpha }
        { \textstyle\max\nolimits_{\alpha\in\mu}|w_{\alpha}| },
        \quad 
        \epsilon'_\alpha < \frac{\epsilon_\alpha}{3} \cdot \frac{1}{ |\theta| + \epsilon' }
    \end{gather*}
    then ensures that $U_\theta U_w \subseteq U_z$.
    Evidently, $U_w$ is an open neighborhood of $w$ in the box topology. 
    So scalar multiplication is continuous when $\bbR^{(\kappa)}$ carries the box topology.
\end{proof}

\section{Lipschitz estimates}\label{section:lipschitz}

In what follows, we let $X$ be a metric space,
and its metric is written $\dist$.
We use the notation $\Ball(x,r)$ for the open ball of radius $r > 0$ centered at $x \in X$. 
The following definition singles out an important subclass of metric spaces:
we say that $X$ satisfies the \emph{approximate midpoint property} if 
for all $x, y \in X$ and $\epsilon > 0$ there exists $z \in X$ such that 
\begin{displaymath}
    \max( \Dist(x,z), \Dist(z,y) ) \leq \Dist(x,y)/2 + \epsilon. 
\end{displaymath}
Given $x, y \in X$, $K \in \bbN$ and $\epsilon > 0$, 
we define an \emph{$\epsilon$-approximate $K$-discrete path from $x$ to $y$} as a finite sequence 
$x_{0}, x_{1}, \dots, x_{K+1} \in X$ such that 
\begin{displaymath}
    x_{0} = x,
    \quad 
    x_{K+1} = y,
    \quad 
    \forall j \in \Indices{0}{K} : \Dist(x_{j},x_{j+1}) \leq \frac{\Dist(x,y)}{K+1} + \epsilon. \end{displaymath}
The approximate midpoint property holds if there is always an $\epsilon$-approximate $1$-discrete path from $x$ to $y$.

Let us further study that notion. 
Suppose that we are given such an $\epsilon$-approximate $K$-discrete path 
$x_{0}, x_{1}, \dots, x_{K+1}$ from $x$ to $y$.
By the triangle inequality, 
\begin{displaymath}
    \Dist(x_{i},x_{j})
    \leq
    \frac{j-i}{K+1} \Dist(x,y) + (j-i) \epsilon
    .
\end{displaymath}
Analogously, we must have 
\begin{gather*}
    \Dist(x_{i},x_{j}) \geq \frac{j-i}{K+1} \Dist(x,y) - (K+1-(j-i)) \epsilon,
\end{gather*}
as follows from 
\begin{align*}
    \Dist(x,y)
    &
    \leq 
    \Dist(x,x_{i})
    +  
    \Dist(x_{i},x_{j})
    +  
    \Dist(x_{j},y)
    \\&
    \leq 
    \Dist(x_{i},x_{j})
    +  
    \frac{K+1-(j-i)}{K+1} \Dist(x,y) + (K+1-(j-i)) \epsilon
    .
\end{align*}
In summary, the distance between $x_{i}$ and $x_{j}$ is comparable to $(j-i) \dist(x,y) / (K+1)$ up to an additive term 
that can be chosen arbitrarily small provided that $\epsilon$ is small enough.

\begin{lemma}\label{lemma:APMimpliespaths}
    Let $X$ be a metric space with approximate midpoint property.
    Then for all $x, y \in X$, all $K \in \bbN$, and $\epsilon > 0$ 
    there exists an $\epsilon$-approximate $K$-discrete path from $x$ to $y$.
\end{lemma}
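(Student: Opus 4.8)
The plan is to reduce the statement to a single construction—an iterated bisection—followed by a selection of roughly equally spaced points. Write $d = \Dist(x,y)$ and $n = K+1$. First I would build, for a parameter $m \in \bbN$ to be fixed later, a long ``dyadic'' path $y_0 = x, y_1, \dots, y_{2^m} = y$ by repeatedly inserting approximate midpoints: starting from the trivial path $x, y$, at each of $m$ stages I apply the approximate midpoint property to every one of the current hops, with a common tolerance $\eta > 0$ (also to be fixed later). After the $t$-th stage the path has $2^t$ hops, and if $L_t$ denotes the largest hop length then $L_0 = d$ and $L_t \le L_{t-1}/2 + \eta$, so that $L_m \le d/2^m + \eta \sum_{j=0}^{m-1} 2^{-j} < d/2^m + 2\eta$.

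The point of working with this fine path is that the triangle inequality gives, for all $0 \le i \le j \le 2^m$, the bound $\Dist(y_i, y_j) \le (j-i) L_m$. Now I would select indices $i_k := \lfloor k \, 2^m / n \rfloor$ for $0 \le k \le n$; these satisfy $i_0 = 0$, $i_n = 2^m$, and $i_{k+1} - i_k \le 2^m/n + 1$. Setting $x_k := y_{i_k}$ therefore yields a sequence $x_0 = x, \dots, x_n = y$ with, for every $0 \le k < n$,
\begin{displaymath}
    \Dist(x_k, x_{k+1}) \le (i_{k+1}-i_k) L_m \le \left( \frac{2^m}{n} + 1 \right)\left( \frac{d}{2^m} + 2\eta \right) = \frac d n + \frac{d}{2^m} + 2\eta\left( \frac{2^m}{n} + 1 \right).
\end{displaymath}
Finally I would choose the parameters in the right order: pick $m$ so large that $d/2^m < \epsilon/2$, and only then pick $\eta > 0$ small enough (with $m$ already fixed) that $2\eta(2^m/n + 1) < \epsilon/2$. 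Then every hop has length at most $d/n + \epsilon = \Dist(x,y)/(K+1) + \epsilon$, so $x_0, \dots, x_{K+1}$ is the desired $\epsilon$-approximate $K$-discrete path.

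The only real obstacle is that $n = K+1$ need not divide any power of two, so the selected subpath is not perfectly equally spaced; the discrepancy surfaces as the ``$+1$'' in $i_{k+1} - i_k \le 2^m/n + 1$ and must be absorbed into the $\epsilon$-slack. This is what forces the order-of-choice care noted above: the accumulated midpoint error $2\eta$ enters multiplied by the number $2^m/n$ of small hops per selected hop, so $\eta$ must be chosen after $m$. Everything else is routine bookkeeping with the triangle inequality; in particular no completeness or length-space hypothesis enters, only finitely many applications of the approximate midpoint property.
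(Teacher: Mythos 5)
Your proposal is correct and follows essentially the same route as the paper's proof: iterated approximate bisection to produce a fine dyadic path, followed by subsampling at nearly equally spaced indices, with the rounding and accumulated midpoint errors absorbed into the $\epsilon$-slack by fixing the dyadic depth before the midpoint tolerance. The only differences are cosmetic bookkeeping (tracking the maximal hop length via $L_t \le L_{t-1}/2 + \eta$ instead of maintaining the approximate-path property inductively, and choosing indices via $\lfloor k\,2^m/(K+1)\rfloor$ rather than the paper's $a_i$).
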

\begin{proof}
    We first show that there exist $\epsilon$-approximate $K$-discrete paths when $K+1 = 2^k$ for some $k \in \bbN$.
By the approximate midpoint property, the statement holds when $K=k=1$. 
    Suppose it holds when $K+1 = 2^k$ for some number $k \geq 1$.
    Let $x, y \in X$ and $\epsilon > 0$.
    Let $\epsilon' > 0$ with $3\epsilon'/2 < \epsilon$. 
    There exist $x_0, x_1, \dots, x_{K+1} \in X$ with $x = x_0$ and $y = x_{K+1}$ and 
    \begin{displaymath}
        \dist(x_i,x_{i+1}) \leq \frac{ \dist(x,y) }{K+1} + \epsilon'.
    \end{displaymath}
    By the approximate midpoint property, there exist $z_0, z_1, \dots, z_{K} \in X$ satisfying
    \begin{displaymath}
        \max(\dist(x_i,z_{i}), \dist(z_i,x_{i+1}))
        \leq
        \dist(x_i,x_{i+1}) / 2 + \epsilon'
        \leq
        \frac{\dist(x,y)}{2K+2} + \epsilon'/2 + \epsilon'
        .
    \end{displaymath}
    We then define 
    \begin{align*}
        x'_0 &= x_{0}, \quad x'_{2} = x_{1}, \quad \dots \quad x'_{2K  } = x_{K}, \quad x'_{2K+2} = x_{K+1},
        \\
        x'_1 &= z_{0}, \quad x'_{3} = z_{1}, \quad \dots \quad x'_{2K+1} = z_{K}.
    \end{align*}
    By construction, $x'_0, x'_1, \dots, x'_{2K+2}$ is a $3\epsilon'/2$-approximate $(2K+1)$-discrete path from $x$ to $y$.
By induction, there exist $\epsilon$-approximate $K$-discrete paths if $K+1 = 2^k$ for some $k \in \bbN$.

    Now let $L \in \bbN$ and $\epsilon > 0$. 
    We let $\epsilon' > 0$ such that $\epsilon' \dist(x,y) < \epsilon/2$. 
    Upon choosing $k$ large enough and writing $K+1 = 2^{k}$, 
    we pick integers $a_0 = 0 < a_1 < \dots < a_{L} < a_{L+1} = K+1$ with \begin{displaymath}
        \frac{i}{L+1} \leq \frac{a_i}{K+1} \leq \frac{i}{L+1} + \epsilon', \quad i = 0,\dots,L+1.
    \end{displaymath}
    We let $\epsilon'' > 0$ such that $\epsilon'' (K+1) < \epsilon/2$. 
    There exists an $\epsilon''$-approximate $K$-discrete path $z_0,\dots,z_{K+1}$ from $x$ to $y$.
    We define $x_{i} := z_{a_i}$ and observe 
    \begin{align*}
        \dist( z_{a_i}, z_{a_{i+1}} ) 
        &
        \leq 
        \frac{a_{i+1}-a_{i}}{K+1} \dist(x,y) + (K+1)\epsilon''
        \\&
        \leq 
        \frac{1}{L+1} \dist(x,y) + \epsilon' \dist(x,y) + (K+1)\epsilon'' 
        \leq 
        \frac{1}{L+1} \dist(x,y) + \epsilon
        .
    \end{align*}
The sequence $z_0,\dots,z_{L+1}$ is an $\epsilon$-approximate $L$-discrete path from $x$ to $y$.
\end{proof}

\begin{remark}
    The incomplete metric space $\bbQ$ has got the approximate midpoint property.
    Complete metric spaces are length spaces if and only if they satisfy the approximate midpoint property. 
    Dense subsets of complete length spaces have the approximate midpoint property,
    and a metric space has the approximate midpoint property if a dense subspace does.
    We can therefore identify metric spaces with approximate midpoint property 
    as the class of dense subsets of complete length spaces. 
    If the metric space is even a geodesic length space, then we always pick ``$0$-approximate $K$-discrete paths'' between points. 
Effectively, $\epsilon$-approximate $K$-discrete paths serve as a stand-in for geodesics in non-geodesic metric spaces.
\end{remark}
\suppress{
    Suppose that $X$ has a dense subset $D \subseteq X$ with the approximate midpoint property.
    Let $x_n, y_n$ be sequences in $D$ that converge to $x,y \in X$.
    Let $z_n$ be a sequence of $\epsilon$-midpoints in $D$.
    We then compute, when $n$ is large enough, that 
    \begin{align*}
     \dist(x,z_n) 
     &\leq 
     \dist(x,x_n) + \dist(x_n,z_n)
     \\&\leq 
     \epsilon + \dist(x_n,z_n)
     \\&\leq 
     \epsilon + \frac 1 2 \dist(x_n,y_n) + \epsilon
     \\&\leq 
     \epsilon + \frac 1 2 \left( \dist(x,x_n) + \dist(x,y) + \dist(y_n,y) \right) + \epsilon
     \\&\leq 
     \epsilon + \frac 1 2 \left( \epsilon + \dist(x,y) + \epsilon \right) + \epsilon
     \leq 
     3 \epsilon + \frac 1 2 \dist(x,y)
     .
    \end{align*}
    Hence $X$ has the approximate midpoint property as well. 
}

Let now $\calU = (U_\alpha)_{\alpha\in\kappa}$ be an open cover of the space $X$.
A very important class of functions that are associated with that open cover are the \emph{distance functions} 
\begin{gather*}
    \func_\alpha(x) := \dist( x, X \setminus U_\alpha ), \quad \alpha \in \kappa.
\end{gather*}
That these functions have Lipschitz constant $1$ is well-known.\footnote{For example, $\func_\alpha$ is the pointwise infimum of the $1$-Lipschitz functions $d(x,y)$ as $y$ ranges over $U_\alpha$.}
Each $\func_\alpha$ is positive over $U_\alpha$. 
The function $\delta_\alpha$ equals $\infty$ in the special case $U_\alpha = X$ 
and it is finite everywhere in all other cases. 
For all $x \in X$ and $\alpha \in \kappa$ we have $\Ball(x,\func_\alpha(x)) \subseteq U_\alpha$.
The \emph{Lebesgue number of $\calU$ at $x$} is the supremum
\begin{gather*}
    \Leb_{\calU}(x) := \sup_{\alpha \in \kappa} \func_\alpha(x), \quad x \in X.
\end{gather*}
If $\Leb_{\calU}$ is infinite anywhere, then it is infinite everywhere. 
If $\Leb_{\calU}$ is finite, then it is a continuous function over $X$ and has Lipschitz constant $1$. 
If the open cover $\calU$ is pointwise finite, 
then the value $\Leb_{\calU}(x)$ is the maximum radius $r>0$ such that the open ball $\Ball(x,r)$ lies within one of the members of $\calU$.

We call $\Leb > 0$ a \emph{Lebesgue number} of an open cover $\calU$ if 
\begin{gather*}
    \forall x \in X : \exists U \in \calU : \Ball(x,\Leb) \subseteq U.
\end{gather*}
If such a Lebesgue number exists, then the \emph{optimal Lebesgue number} is defined as 
\begin{gather*}
    \Leb_\ast := \inf_{ x \in X } \Leb_{\calU}(x) = \inf_{ x \in X } \sup_{ \alpha \in \kappa } \func_\alpha(x).
\end{gather*}
If the open cover $\calU$ is pointwise finite, 
then each open ball of radius $\Leb_{\ast}$ around some $x \in X$ lies within one of the members of $\calU$.
We are interested in how the optimal Lebesgue number compares to the pointwise sum of the distance functions.
This depends on the properties of the metric space.

\begin{lemma}\label{lemma:estimateforlambdasum}
    Let $X$ be a metric space with a pointwise finite open cover $\calU$.
    Let $x \in X$ and let $\alpha_1, \alpha_2, \dots, \alpha_K \in \kappa$ be such that 
    $\func_{\alpha_1}(x) \geq \func_{\alpha_2}(x) \geq \dots \geq \func_{\alpha_K}(x) > 0$
    and $\func_{\alpha}(x) = 0$ for all $\alpha \in \kappa \setminus \{\alpha_1, \alpha_2, \dots, \alpha_K\}$.
    Then
    \begin{displaymath}
        \func_{\alpha_1}(x) \geq \Leb_\ast .
    \end{displaymath}
    If $X$ has the approximate midpoint property, then 
    \begin{displaymath}
        \func_{\alpha_1}(x) + \func_{\alpha_2}(x) \geq 2\Leb_\ast 
.
    \end{displaymath}
\end{lemma}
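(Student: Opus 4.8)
The first inequality is immediate. By hypothesis $\func_{\alpha_1}(x)$ is the largest of $\func_{\alpha_1}(x),\dots,\func_{\alpha_K}(x)$ and every other distance function vanishes at $x$, so $\func_{\alpha_1}(x)=\sup_{\alpha\in\kappa}\func_\alpha(x)=\Leb_\calU(x)\ge\inf_{y\in X}\Leb_\calU(y)=\Leb_\ast$. (Pointwise finiteness makes the index list finite, and it is nonempty because $x$ lies in some member of the open cover; since all listed values are finite, no member of $\calU$ equals $X$, hence $\Leb_\ast$ is finite, and also $X\setminus U_{\alpha_1}\neq\emptyset$.)

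For the second inequality the plan is to manufacture from $x$ a point at which the Lebesgue function is small. Write $r:=\func_{\alpha_1}(x)>0$ and $s:=\func_{\alpha_2}(x)$ (with $s:=0$ when $K=1$); then $0\le s\le r$, and the ordering together with the vanishing hypothesis gives $\func_\alpha(x)\le s$ for every $\alpha\neq\alpha_1$. Put $t:=\tfrac{r-s}{2r}\in[0,\tfrac12]$, the value for which $(1-t)r=s+tr=\tfrac12(r+s)$. Given $\eta>0$, I would choose $w\in X\setminus U_{\alpha_1}$ with $\Dist(x,w)<r+\eta$, and then construct, via Lemma~\ref{lemma:APMimpliespaths}, a point $z$ sitting at fraction roughly $t$ along a fine approximate discrete path from $x$ to $w$: pick $K$ so large that $\Dist(x,w)/(K+1)<\eta$, then $\epsilon>0$ so small that $(K+1)\epsilon<\eta$, take an $\epsilon$-approximate $K$-discrete path $x=x_0,\dots,x_{K+1}=w$, and set $z:=x_i$ with $i$ the integer nearest $t(K+1)$.

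The triangle-inequality estimates recorded just before Lemma~\ref{lemma:APMimpliespaths} then give $\Dist(x,z)\le t\,\Dist(x,w)+2\eta$ and $\Dist(z,w)\le(1-t)\,\Dist(x,w)+2\eta$. Since $\func_{\alpha_1}$ is $1$-Lipschitz and $w\notin U_{\alpha_1}$ forces $\func_{\alpha_1}(z)\le\Dist(z,w)$, we get $\func_{\alpha_1}(z)\le(1-t)r+3\eta$; and for every $\alpha\neq\alpha_1$ the $1$-Lipschitz bound $\func_\alpha(z)\le\func_\alpha(x)+\Dist(x,z)\le s+\Dist(x,z)$ gives $\func_\alpha(z)\le s+tr+3\eta$. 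Because $(1-t)r=s+tr=\tfrac12(r+s)$, taking the supremum over $\alpha\in\kappa$ yields $\Leb_\ast\le\Leb_\calU(z)\le\tfrac12(r+s)+3\eta$, and letting $\eta\downarrow0$ gives $\func_{\alpha_1}(x)+\func_{\alpha_2}(x)=r+s\ge2\Leb_\ast$.

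The one place that needs care is the order of choices in this construction: the discretization error of stopping at fraction $i/(K+1)$ instead of exactly $t$ is at most $1/(K+1)$, so $K$ must be fixed first (large, depending on $\Dist(x,w)$), and only afterwards can $\epsilon$ be shrunk so that the accumulated per-step slack $(K+1)\epsilon$ is also absorbed into the error term. I expect this bookkeeping, together with recalling that the approximate-midpoint hypothesis enters only through Lemma~\ref{lemma:APMimpliespaths}, to be the sole (and minor) obstacle; everything else is the triangle inequality and the $1$-Lipschitz property of the distance functions $\func_\alpha$.
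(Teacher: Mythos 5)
Your proof is correct and takes essentially the same route as the paper: travel along an $\epsilon$-approximate discrete path between $x$ and a nearly closest point of $X\setminus U_{\alpha_1}$ and use the $1$-Lipschitz property of the $\func_\alpha$ at a well-chosen intermediate point, the only difference being that you run the estimate directly at the balance fraction $t=(r-s)/(2r)$ to bound $\Leb_\ast$ from above, whereas the paper organizes the same computation as a proof by contradiction. (The one loose end---the listed values need not be finite---is harmless, since $\func_{\alpha_1}(x)=\infty$ makes both inequalities trivial, exactly the case the paper dispatches first.)
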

\begin{proof}
    If $X \in \calU$, then $\func_{\alpha_1}(x) = \infty$ and there is nothing to show. 
    So let us assume that $X \notin \calU$.
    The first inequality holds by definition. 
    Let us now assume that $X$ has the approximate midpoint property.
    Let $x \in X$. 
    Let $\alpha \in \kappa$ such that $\func_\alpha(x) \geq \func_\beta(x)$ for all $\beta \in \kappa$. 
    Then $x \in U_\alpha$.
    If there exists $\gamma \in \kappa$ with $\gamma \neq \alpha$ and $\func_\alpha(x) + \func_\gamma(x) \geq 2 \Leb_\ast$,
    then there is nothing to show. 
    So let us assume that $\func_\alpha(x) = ( 2 - \theta ) \Leb_\ast$ for some $\theta \in (0,1]$
    and that $\func_\gamma(x) \leq \eta \Leb_\ast$ for some $\eta \in [0,\theta)$ and all $\gamma \in \kappa$ with $\gamma \neq \alpha$. 
    We show how this leads to a contradiction. 
    
    For every $\epsilon > 0$ there exists $c \in X \setminus U_\alpha$ such that $\dist(x,c) < \func_\alpha(x) + \epsilon$.
    We pick an $\epsilon$-approximate $K$-discrete path from $c$ to $x$. 
    For all $k = 0, \dots, K+1$ we have 
    \begin{displaymath}
        \dist(c,x_k) \leq \frac{k}{K+1} \dist(x,c) + \epsilon,
        \quad 
        \dist(x,x_k) \leq \frac{K+1-k}{K+1} \dist(x,c) + \epsilon.
    \end{displaymath}
    In what follows, we abbreviate $\sigma_{k} := \frac{k}{K+1} \in [0,1]$. 
    Since $\func_\alpha$ is $1$-Lipschitz and $\func_\alpha(c) = 0$, we see that 
    \begin{align*}
        \func_\alpha(x_k) 
        &
        \leq 
        \frac{k}{K+1} \dist(x,c) + \epsilon
\leq 
        \sigma_{k} \func_\alpha(x) + \sigma_{k} \epsilon + \epsilon
\leq 
        \sigma_{k} ( 2 - \theta ) \Leb_\ast + 2 \epsilon
        .
    \end{align*}
    At the same time, \begin{align*}
        \func_\gamma(x_k) \leq 
        \func_\gamma(x) + \dist(x,x_k)
        &
        \leq 
        \suppress{
        \func_\gamma(x) + \frac{K+1-k}{K+1} \dist(x,c) + \epsilon
        \\&
        =
        }
        \eta \Leb_\ast + \dist(x,c) - \sigma_{k} \dist(x,c) + \epsilon
        \\&
        \leq
        \eta \Leb_\ast + \func_\alpha(x) + \epsilon - \sigma_{k} \func_\alpha(x) + \epsilon
        \suppress{
        \\&
        =
        \eta \Leb_\ast 
        + 
        ( 2 - \theta ) \Leb_\ast 
        - 
        \sigma_{k} ( 2 - \theta ) \Leb_\ast
        + 
        2\epsilon 
        \\&
        }
        \leq 
        ( 2 - \theta + \eta ) \Leb_\ast 
        - 
        \sigma_{k} ( 2 - \theta ) \Leb_\ast
        + 
        2 \epsilon
        .
    \end{align*}
    First, we have got $\func_\alpha(x_k) < \Leb_\ast$ under the condition that 
    \begin{align*}
        \sigma_{k} ( 2 - \theta ) \Leb_\ast + 2 \epsilon < \Leb_\ast
        &
        \suppress{
        \quad\equivalent\quad 
        ( 2 - \theta ) \Leb_\ast \sigma_{k} < \Leb_\ast - 2\epsilon
        \\&
        }
        \quad\equivalent\quad 
        ( 2 - \theta ) \sigma_{k} < 1 - 2\epsilon/\Leb_\ast
        .
    \end{align*}
    Second, we have got $\func_\gamma(x_k) < \Leb_\ast$ under the condition that 
    \begin{align*}
        ( 2 - \theta + \eta ) \Leb_\ast - \sigma_{k} ( 2 - \theta ) \Leb_\ast + 2 \epsilon < \Leb_\ast
        &
        \suppress{
        \quad\equivalent\quad 
        ( 1 - \theta + \eta ) \Leb_\ast + 2 \epsilon <  \sigma_{k} ( 2 - \theta ) \Leb_\ast
        \\&
        \quad\equivalent\quad 
        ( 1 - \theta + \eta ) \Leb_\ast + 2 \epsilon < \sigma_{k} ( 2 - \theta ) \Leb_\ast
        \\&
        }
        \quad\equivalent\quad 
        1 - ( \theta - \eta ) + 2 \epsilon / \Leb_\ast < \sigma_{k} ( 2 - \theta ) 
        .
    \end{align*}
    Under the condition that $\epsilon < (\theta - \eta) \Leb_\ast / 4$, 
    there exists $s \in [0,1]$ such that 
    \begin{gather*}
        1 - (\theta - \eta) + 2 \epsilon/\Leb_\ast 
        <  
        s
        <
        1 - 2\epsilon/\Leb_\ast 
        .
    \end{gather*}
    Choose $K \in \bbN$ and $0 \leq k \leq K+1$ such that $\sigma_{k}$ is sufficiently close to $s$. 
    Then $\func_\beta(x_k) < \Leb_\ast$ for all $\beta \in \kappa$. 
    But that contradicts the definition of the optimal Lebesgue number. 
    Therefore, $\func_{\alpha_1}(x) + \func_{\alpha_2}(x) \geq 2\Leb_\ast$
    if $X$ has the approximate midpoint property.
\end{proof}

We are now in a position to address our main result:
we construct a Lipschitz partition of unity 
whose Lipschitz properties are controlled by the multiplicity and Lebesgue number of the original cover. 
We will also measure the Lipschitz continuity of the vectorization of any partition of unity.
The specific partitions of unity that we analyze include what is known as the canonical partition of unity in the literature.

\begin{theorem}\label{theorem:mainresult}
    Let $\calU = ( U_\alpha )_{\alpha \in \kappa}$ be a pointwise finite open cover of $X$ with Lebesgue number $\Leb > 0$ and multiplicity $\Mult \in \bbN$.
Let $p \in [1,\infty)$.
    Then the functions
    \begin{gather*}
        \lambda_{\alpha}(x) = \frac{ \func_\alpha(x)^{p} }{ \sum_{\beta\in\kappa} \func_\beta(x)^{p} },
        \quad 
        \alpha \in \kappa,
    \end{gather*}
    constitute a continuous partition of unity $\Lambda = ( \lambda_{\alpha} )_{\alpha \in \kappa}$ subordinate to $\calU$.
    We have 
    \begin{gather*}
        \forall \mu \subseteq \kappa :
        \forall x,y \in X : 
        \left| \lambda_{\mu}(x) - \lambda_{\mu}(y) \right|
        \leq
        p \frac{\max(1,\Mult-1)^{\frac 1 p}}{\Leb} \dist(x,y)
        .
    \end{gather*}
    If $X$ has the approximate midpoint property, then 
    \begin{gather*}
        \forall \mu \subseteq \kappa :
        \forall x,y \in X : 
        | \lambda_{\mu}(x) - \lambda_{\mu}(y) | 
        \leq 
        p 2^{1 - \frac 1 p} \frac{(\Mult-1)^{\frac 1 p}}{2\Leb} \dist(x,y)
        ,
        \\
        \forall \mu \subseteq \kappa :
        \forall x,y \in X : 
        | \lambda_{\mu}(x) - \lambda_{\mu}(y) | 
        \leq 
        p C_p \frac{(\Mult-1)}{2\Leb} \dist(x,y)
        ,
    \end{gather*}
    where $C_{p} \in [1,2)$ is a constant that only depends on $p$ and is described in the proof. 
\end{theorem}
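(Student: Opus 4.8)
The plan is to settle the structural claims quickly and then reduce the Lipschitz estimate, in all three forms, to one pointwise inequality obtained from a convexity identity for $\lambda_\mu$.

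If $X\in\calU$ every $\lambda_\alpha$ is constant and there is nothing to prove, so I would assume $X\notin\calU$; then each $\func_\alpha$ is real-valued and $1$-Lipschitz. The denominator $S:=\sum_{\beta\in\kappa}\func_\beta^{\,p}$ is finite and continuous: at any point $x$ only finitely many ($\le\Mult$) summands are positive while the rest are bounded near $x$ by $\dist(x,\cdot)^{p}$. By the Lebesgue-number hypothesis, equivalently the first inequality of Lemma~\ref{lemma:estimateforlambdasum}, $S\ge\func_{\alpha_1}^{\,p}\ge\Leb_\ast^{\,p}\ge\Leb^{\,p}>0$ everywhere, so $\lambda_\alpha=\func_\alpha^{\,p}/S$ is continuous and non-negative; pointwise finiteness, $\sum_\alpha\lambda_\alpha=1$, and $\{\lambda_\alpha>0\}=U_\alpha$ are immediate, whence $\Lambda$ is a continuous partition of unity subordinate to $\calU$ and every partial sum $\lambda_\mu$ is well-defined.

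For the quantitative part I fix $\mu$, $x$, $y$ and write $T:=\sum_{\alpha\in\mu}\func_\alpha^{\,p}$, $R:=S-T$, so $\lambda_\mu=T/S$. If $\dist(x,y)\ge\Leb$ there is nothing to prove, since $|\lambda_\mu(x)-\lambda_\mu(y)|\le 1\le p\le p\,\max(1,\Mult-1)^{1/p}\Leb^{-1}\dist(x,y)$; so assume $\dist(x,y)<\Leb$ and, without loss of generality, $\lambda_\mu(x)\ge\lambda_\mu(y)$. The key identity is
\begin{displaymath}
    \lambda_\mu(x)-\lambda_\mu(y)=\frac{T(x)R(y)-T(y)R(x)}{S(x)S(y)}.
\end{displaymath}
Expanding the numerator in the two ways $R(x)\bigl(T(x)-T(y)\bigr)+T(x)\bigl(R(y)-R(x)\bigr)$ and $R(y)\bigl(T(x)-T(y)\bigr)+T(y)\bigl(R(y)-R(x)\bigr)$ and discarding negative contributions gives, for $(t,S)$ equal to either $(\lambda_\mu(y),S(x))$ or $(\lambda_\mu(x),S(y))$, the bound $\lambda_\mu(x)-\lambda_\mu(y)\le\bigl((1-t)(T(x)-T(y))^{+}+t\,(R(y)-R(x))^{+}\bigr)/S$; as a convex combination is at most the maximum, this yields
\begin{displaymath}
    \lambda_\mu(x)-\lambda_\mu(y)\le\frac{\max\bigl((T(x)-T(y))^{+},\,(R(y)-R(x))^{+}\bigr)}{\max\bigl(S(x),S(y)\bigr)}.
\end{displaymath}
I would then bound the two positive parts using the elementary inequality $a^{p}-b^{p}\le p\,a^{p-1}(a-b)$ for $a,b\ge 0$, the $1$-Lipschitzness of the $\func_\alpha$, and Hölder's inequality, obtaining $(T(x)-T(y))^{+}\le p\,\dist(x,y)\,T(x)^{(p-1)/p}n^{1/p}$ and $(R(y)-R(x))^{+}\le p\,\dist(x,y)\,R(y)^{(p-1)/p}m^{1/p}$, where $n$ (resp.\ $m$) counts the indices in $\mu$ (resp.\ outside $\mu$) that are active at $x$ (resp.\ at $y$). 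Since $T(x),R(y)\le\max(S(x),S(y))$, the $S$-powers cancel down to $\max(S(x),S(y))^{-1/p}\le\Leb^{-1}$, leaving $p\,\dist(x,y)\max(n,m)^{1/p}\Leb^{-1}$, and in the generic case $0<\lambda_\mu(y)$, $\lambda_\mu(x)<1$ one has $\max(n,m)\le\Mult-1$ — because when $\mu$ and its complement are both active at a point each occupies at most $\Mult-1$ of the multiplicity — giving exactly $p\,\max(1,\Mult-1)^{1/p}\Leb^{-1}\dist(x,y)$.

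The degenerate cases are handled by hand: if $\lambda_\mu(x)=1$ then $R(x)=0$ and $\lambda_\mu(x)-\lambda_\mu(y)=R(y)/S(y)$ with $R(y)\le(\Mult-1)\dist(x,y)^{p}$ (each active $\func_\beta$, $\beta\notin\mu$, vanishes at $x$, and $\mu$ is active at $y$ unless $\lambda_\mu(y)=0$, which with $\lambda_\mu(x)=1$ forces $\dist(x,y)\ge\Leb$ geometrically and is thus excluded); since $\dist(x,y)<\Leb$ one checks $(\Mult-1)\dist(x,y)^{p}\Leb^{-p}\le p\,\max(1,\Mult-1)^{1/p}\Leb^{-1}\dist(x,y)$, and symmetrically for $\lambda_\mu(y)=0$. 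Under the approximate midpoint property, the second part of Lemma~\ref{lemma:estimateforlambdasum} gives $\func_{\alpha_1}+\func_{\alpha_2}\ge 2\Leb_\ast$ for the two largest distance functions, so convexity of $t\mapsto t^{p}$ yields $S\ge\func_{\alpha_1}^{\,p}+\func_{\alpha_2}^{\,p}\ge 2^{1-p}(2\Leb_\ast)^{p}=2\Leb_\ast^{\,p}$; feeding this into the final step replaces $\Leb^{-1}$ by $(2\Leb_\ast^{\,p})^{-1/p}=2^{-1/p}\Leb_\ast^{-1}\le 2^{-1/p}\Leb^{-1}$, which is the factor turning $p\,\max(1,\Mult-1)^{1/p}/\Leb$ into $p\,2^{1-1/p}(\Mult-1)^{1/p}/(2\Leb)$. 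For the $C_p$-form I would not invoke Hölder but estimate each $\func_\alpha(x)^{p-1}\le\func_{\alpha_1}(x)^{p-1}$, so the relevant ratio becomes $\func_{\alpha_1}^{\,p-1}/(\func_{\alpha_1}^{\,p}+\func_{\alpha_2}^{\,p})$ under $\func_{\alpha_1}+\func_{\alpha_2}\ge 2\Leb_\ast$, $\func_{\alpha_1}\ge\func_{\alpha_2}\ge 0$; writing $\func_{\alpha_1}=s\,\Leb_\ast$, $s\in[1,2]$, it equals $\Leb_\ast^{-1}s^{p-1}/(s^{p}+(2-s)^{p})$, and setting $C_p:=2\sup_{s\in[1,2]}s^{p-1}/(s^{p}+(2-s)^{p})$ — which equals $1$ at $p=1$ and lies in $[1,2)$ in general, since the supremand is $1/2$ at both endpoints and is $<1$ throughout because $s^{p-1}(s-1)+(2-s)^{p}>0$ on $[1,2]$ — yields the bound $p\,C_p(\Mult-1)/(2\Leb)$.

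The main obstacle is extracting the \emph{sharp} constant rather than merely the correct shape. Two points demand care: (i) sharpening the multiplicity count from $\Mult$ to $\Mult-1$ together with the degenerate cases $\lambda_\mu\in\{0,1\}$, where the analytic estimate collapses and must be replaced by a geometric argument from the Lebesgue number; and (ii) the asymmetry between $x$ and $y$, which is neutralized by the observation that the convexity identity delivers the bound with \emph{either} $S(x)$ or $S(y)$ in the denominator, so one may divide by $\max(S(x),S(y))$, after which the Hölder exponents balance exactly.
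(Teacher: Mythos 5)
Your strategy is essentially the paper's: the same identity $\lambda_\mu(x)-\lambda_\mu(y)=(T(x)R(y)-T(y)R(x))/(S(x)S(y))$, the same convex-combination trick yielding a $\max(S(x),S(y))$ denominator, the same $a^p-b^p\le p\,a^{p-1}(a-b)$ plus H\"older/means step with the lower bounds on $S$ from Lemma~\ref{lemma:estimateforlambdasum}, and your $C_p=2\sup_{s\in[1,2]}s^{p-1}/(s^p+(2-s)^p)$ is exactly the paper's constant after $s=2\eta$ (your compactness argument for $C_p<2$ is even cleaner than the paper's critical-point analysis). But two steps, as written, do not hold up. First, in the degenerate case $\lambda_\mu(x)=1$ you bound $R(y)\le(\Mult-1)\dist(x,y)^p$ and then claim $(\Mult-1)\dist(x,y)^p\Leb^{-p}\le p\max(1,\Mult-1)^{1/p}\Leb^{-1}\dist(x,y)$ whenever $\dist(x,y)<\Leb$. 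This is false for $p>1$ once $\Mult-1>p^{p/(p-1)}$: with $p=2$, $\Mult=10$, $\Leb=1$, $\dist(x,y)=0.99$ the left side is about $8.8$ while the right side is about $5.9$. The repair stays inside your framework: since $R(y)\le S(y)$ one has $R(y)/S(y)\le\big(R(y)/S(y)\big)^{1/p}\le(\Mult-1)^{1/p}\dist(x,y)/\Leb$, or, as the paper does, note $\lambda_\mu(x)-\lambda_\mu(y)=(R(y)-R(x))^{+}/S(y)$ and run the generic H\"older estimate, which needs only the count $m\le\Mult-1$; but the inequality you actually wrote fails.

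Second, the approximate-midpoint bounds are not obtained merely by ``feeding in'' $S\ge 2\Leb_\ast^{p}$. Your trivial case split at $\dist(x,y)\ge\Leb$ no longer closes the argument there: for $p=1$, $\Mult=2$ the target $(\Mult-1)\dist(x,y)/(2\Leb)$ is strictly less than $1$ for $\dist(x,y)\in[\Leb,2\Leb)$, so the analytic estimate must be valid up to $\dist(x,y)<2\Leb$. In that range, however, your geometric exclusion of the configuration $\lambda_\mu(x)=1$, $\lambda_\mu(y)=0$ (which is what guarantees the counts $n,m\le\Mult-1$) only works for $\dist(x,y)<\Leb$. The paper closes exactly this gap by using the approximate midpoint property to produce a common member $U_\beta$ containing both $x$ and $y$ whenever $\dist(x,y)<2\Leb$; some argument of this kind is missing from your proposal, for both the $2^{1-1/p}(\Mult-1)^{1/p}$ bound and the $C_p$ bound. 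A minor related gloss: in the $C_p$ estimate you set $\func_{\alpha_1}=s\Leb_\ast$ with $s\in[1,2]$, silently excluding $\func_{\alpha_1}>2\Leb_\ast$; that case needs the separate one-line bound $\func_{\alpha_1}^{p-1}/S\le 1/\func_{\alpha_1}\le 1/(2\Leb_\ast)$, which suffices since $C_p\ge1$.
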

\begin{proof} 
If $X \in \calU$, then $\Lambda$ includes constant functions for each copy of $X$ in $\calU$, and all other functions are zero.
    Let us assume from here on that $X \notin \calU$. 
    
    If $\Mult=1$, then $\calU$ consists of open pairwise disjoint sets with pairwise distance at least $\Leb > 0$.
    The first inequality is evident then,
    and if $X \notin \calU$ and $\Mult = 1$, then $X$ does not satisfy the approximate midpoint property.

Let $p \in [1,\infty)$. For each $\alpha \in \kappa$, we set 
    \begin{gather*}
        \func_\alpha(x) 
        := 
        \dist( x, X \setminus U_\alpha ),
        \quad 
        \func_{\alpha,p}(x) = \func_\alpha(x)^p
        .
    \end{gather*}
    Each $\func_\alpha$ is a non-negative function positive over $U_\alpha$ and has Lipschitz constant $1$.
    Next, we introduce 
    \begin{gather*}
        \lambda_{\alpha,p}(x) 
        := 
        \func_\alpha(x)^{p} 
        \cdot 
        \left( \inlinesum_{ \beta \in \kappa } \func_\beta(x)^{p} \right)^{-1}
        .
    \end{gather*}
    The function $\lambda_{\alpha,p}$ is well-defined as $\calU$ is a pointwise finite cover of $X$.
    Given any $\mu \subseteq \kappa$, we let  
    \begin{gather*}
        \func_{\mu,p} := \inlinesum_{ \beta \in \mu } \func_{\beta,p},
        \quad 
        \lambda_{\mu,p} := \inlinesum_{ \beta \in \mu } \lambda_{\beta,p}.
    \end{gather*}
    Again, these are well-defined functions because $\calU$ is pointwise finite. 
    By construction, $\lambda_{\mu,p}$ takes values within $[0,1]$ and $\lambda_{\kappa,p} = 1$ everywhere.
    In particular, $\Lambda := ( \lambda_{\alpha,p} )_{\alpha\in\kappa}$ is the partition of unity subordinate to $\calU$ 
    that we study.

We proceed with some general observations on the partial sums of the distance functions and some first estimates.
    In what follows, the sets of indices 
    \begin{gather*}
        S(\mu,x,y) 
        := 
        \{\alpha\in\mu\suchthat\func_\alpha(x)>\func_\alpha(y)\}
        ,
        \quad 
        T(\mu,x,y) 
        := 
        \{\alpha\in\kappa\setminus\mu\suchthat\func_\alpha(x)>\func_\alpha(y)\}
    \end{gather*}
    and their possible sizes will be important.
    Consider any $\mu \subseteq \kappa$.
    For all $\alpha \in \mu$ and all $x, y \in X$, 
    we either have $\func_\alpha(x) \leq \func_\alpha(y)$, so that
    $\func_{\alpha}(x)^p \leq \func_{\alpha}(y)^p$, 
    or we have $\func_\alpha(x) > \func_\alpha(y)$ and so $\func_\alpha(x) > 0$,
    in which case
    \begin{align*}
        \func_{\alpha}(x)^p
        =
        \func_{\alpha}(y)^p + p \int_{\func_{\alpha}(y)}^{\func_{\alpha}(x)} g^{p-1} dg 
        &
        \leq
        \func_{\alpha}(y)^p + p \func_{\alpha}(x)^{p-1} \left( \func_{\alpha}(x)- \func_{\alpha}(y)\right)
        \\&
        \leq
        \func_{\alpha}(y)^p + p \func_{\alpha}(x)^{p-1} \dist(x,y)
        .
    \end{align*}
    Summing over $\alpha \in \mu$, we thus conclude that  
    \begin{gather*}
        \func_{\mu,p}(x)
        -
        \func_{\mu,p}(y)
        \leq
        \sum_{ \alpha \in S(\mu,x,y) }
        \func_{\alpha}(x)^{p-1} 
        \cdot p\dist(x,y)
        = 
        \func_{S(\mu,x,y),p-1}(x)
        \cdot p\dist(x,y)
        .
    \end{gather*}
    \suppress{\color{blue}Analogously, 
    \begin{align*}
        \func_{\mu,p}(y)
        &
        \leq
        \func_{\mu,p}(x)
        +
        \sum_{ \alpha \in S(\mu,y,x) }
        \func_{\alpha}(y)^{p-1} 
        \cdot p\dist(x,y)
        .
    \end{align*}\color{black}}
    Arguing analogously to bound $\func_{\mu,p}(y) - \func_{\mu,p}(x)$ from above, 
    we obtain an upper bound 
    \begin{gather*}
        |\func_{\mu,p}(x) - \func_{\mu,p}(y)|
        \leq 
        \max
        \left( 
            \func_{S(\mu,y,x),p-1}(y), \func_{S(\mu,x,y),p-1}(x)
        \right)
        \cdot p\dist(x,y)
        .
    \end{gather*}
    Note that the sums above are only ever taken over up to $\Mult$ indices.

We now study the Lipschitz constants of the functions that constitute the partition of unity and their partial sums.
    Consider any $\mu \subseteq \kappa$ and $x, y \in X$.
    We have   
    \begin{align*}
        \lambda_{\mu,p}(x) - \lambda_{\mu,p}(y)
        &=
        \frac{ \func_{\mu,p}(x) }{ \func_{\kappa,p}(x) }
        -
        \frac{ \func_{\mu,p}(y) }{ \func_{\kappa,p}(y) }
=
        \frac{ \func_{\mu,p}(x) \func_{\kappa,p}(y) - \func_{\mu,p}(y) \func_{\kappa,p}(x) }{ \func_{\kappa,p}(x) \func_{\kappa,p}(y) }
        .
    \end{align*}
    Easy algebraic manipulations show 
    \begin{align*}
\func_{\mu,p}(x) \func_{\kappa,p}(y) - \func_{\mu,p}(y) \func_{\kappa,p}(x)
        &
        =
        \func_{\mu,p}(x) \func_{\kappa\setminus\mu,p}(y) 
        + 
        \func_{\mu,p}(x) \func_{\mu,p}(y) 
        - 
        \func_{\mu,p}(x) \func_{\mu,p}(y) 
        - 
        \func_{\mu,p}(y) \func_{\kappa\setminus\mu,p}(x) 
        \\& 
        =
        \func_{\mu,p}(x) \func_{\kappa\setminus\mu,p}(y) 
        - 
        \func_{\mu,p}(y) \func_{\kappa\setminus\mu,p}(x) 
        \\& 
        = 
        \left( \func_{\mu,p}(x) - \func_{\mu,p}(y) \right) \func_{\kappa\setminus\mu,p}(y)
        -
        \func_{\mu,p}(y) \left( \func_{\kappa\setminus\mu,p}(x) - \func_{\kappa\setminus\mu,p}(y) \right) 
        .
    \end{align*}
In other words,
    \begin{align*}
        \lambda_{\mu}(x) - \lambda_{\mu}(y)
        &=
        \frac{ 
            \left( \func_{\mu,p}(x) - \func_{\mu,p}(y) \right) \func_{\kappa\setminus\mu,p}(y)
            -
            \func_{\mu,p}(y) \left( \func_{\kappa\setminus\mu,p}(x) - \func_{\kappa\setminus\mu,p}(y) \right) 
        }{ 
            \func_{\kappa,p}(x) \func_{\kappa,p}(y) 
        }
        .
    \end{align*}
    Based on that argument, 
    and a similar argument with the roles of $x$ and $y$ interchanged, 
    we derive the following upper bounds: 
    \begin{align*}
        \left| \lambda_{\mu}(x) - \lambda_{\mu}(y) \right| 
        &
        \leq
        \frac{ 
            \big| \func_{\mu,p}(x) - \func_{\mu,p}(y) \big|
            \func_{\kappa\setminus\mu,p}(y) 
            +
            \big| \func_{\kappa\setminus\mu,p}(x) - \func_{\kappa\setminus\mu,p}(y) \big|
            \func_{\mu,p}(y) 
        }{
            \func_{\kappa,p}(x) \func_{\kappa,p}(y) 
        }
        ,
        \\
        \left| \lambda_{\mu}(x) - \lambda_{\mu}(y) \right| 
        &
        \leq
        \frac{ 
            \big| \func_{\mu,p}(x) - \func_{\mu,p}(y) \big|
            \func_{\kappa\setminus\mu,p}(x) 
            +
            \big| \func_{\kappa\setminus\mu,p}(x) - \func_{\kappa\setminus\mu,p}(y) \big|
            \func_{\mu,p}(x) 
        }{
            \func_{\kappa,p}(x) \func_{\kappa,p}(y) 
        }
        ,
        \\
        \left| \lambda_{\mu}(x) - \lambda_{\mu}(y) \right| 
        & 
        \leq
        \max\left( \func_{\kappa,p}(x), \func_{\kappa,p}(y) \right)^{-1}
        \max\left( 
            \big| \func_{\mu,p}(x) - \func_{\mu,p}(y) \big|
            ,
            \big| \func_{\kappa\setminus\mu,p}(x) - \func_{\kappa\setminus\mu,p}(y) \big|
        \right)
        .
    \end{align*}
    In what follows, we write $c_\ast := 2$ if we assume that $X$ has the approximate midpoint property and $c_\ast := 1$ otherwise.
    If $x, y \in X$ with $\dist(x,y) \geq c_\ast\Leb$, then 
    \begin{align*}
        \forall \mu \subseteq \kappa 
        : 
        | \lambda_{\mu,p}(x) - \lambda_{\mu,p}(y) |
        \leq 
        1 
        \leq 
        \left( c_\ast\Leb \right)^{-1}
        \dist(x,y)
        .
    \end{align*}
    It remains to consider the case $\dist(x,y) < c_\ast\Leb$. 
    Then there exists $\beta \in \kappa$ such that $x, y \in U_\beta$.

    If there are $\Mult$ different $\alpha \in \mu$ with $\func_\alpha(x) > 0$ 
    and $\Mult$ different $\alpha \in \mu$ with $\func_\alpha(y) > 0$, 
    then we already must have $\lambda_{\mu,p}(x) = \lambda_{\mu,p}(y) = 1$. 
    Likewise, if there are $\Mult$ different $\alpha \in \kappa\setminus\mu$ with $\func_\alpha(x) > 0$ 
    and $\Mult$ different $\alpha \in \kappa\setminus\mu$ with $\func_\alpha(y) > 0$, 
    then we already must have $\lambda_{\mu,p}(x) = \lambda_{\mu,p}(y) = 0$. 
    Consequently, it only remains to analyze the situation 
    where at least one of the sets 
    $S(\mu,x,y)$ and $S(\mu,y,x)$ has size at most $\Mult-1$
    and 
    where at least one of the sets 
    $T(\mu,x,y)$ and $T(\mu,y,x)$ has size at most $\Mult-1$.
    
    If $\card{S(\mu,x,y)} = \Mult$, then $S(\mu,y,x) = T(\mu,x,y) = \emptyset$ and $\card{T(\mu,y,x)} \leq \Mult-1$,
    and with the observation $\func_{\kappa,p}(x) = \func_{\mu,p}(x)$ it then follows that 
    \begin{align*}
        \left| \lambda_{\mu}(x) - \lambda_{\mu}(y) \right| 
        &
        \leq
        \func_{\kappa,p}(y)^{-1}
        \big| \func_{\kappa\setminus\mu,p}(x) - \func_{\kappa\setminus\mu,p}(y) \big|
        \leq 
        \frac{\func_{T(\mu,y,x),{p-1}}(y)}{\func_{\kappa,p}(y)}
        \cdot p\dist(x,y)
        .
    \end{align*}
    Similarly, 
    if $\card{S(\mu,y,x)} = \Mult$, then $S(\mu,x,y) = T(\mu,y,x)=\emptyset$ and $\card{T(\mu,x,y)} \leq \Mult-1$, as well as
    \begin{align*}
        \left| \lambda_{\mu}(x) - \lambda_{\mu}(y) \right| 
        &
        \leq
        \func_{\kappa,p}(x)^{-1}
        \big| \func_{\kappa\setminus\mu,p}(y) - \func_{\kappa\setminus\mu,p}(x) \big|
        \leq 
        \frac{\func_{T(\mu,x,y),{p-1}}(x)}{\func_{\kappa,p}(x)}
        \cdot p\dist(x,y)
        .
    \end{align*}
    We handle the cases $\card{T(\mu,x,y)} = \Mult$ and $\card{T(\mu,y,x)} = \Mult$ completely analogously,
    finding either of
    \begin{align*}
        \left| \lambda_{\mu}(x) - \lambda_{\mu}(y) \right| 
        &
        \leq
        \frac{\func_{S(\mu,y,x),{p-1}}(y)}{\func_{\kappa,p}(y)}
        \cdot p\dist(x,y)
        ,
        \qquad 
        \left| \lambda_{\mu}(x) - \lambda_{\mu}(y) \right| 
        \leq 
        \frac{\func_{S(\mu,x,y),{p-1}}(x)}{\func_{\kappa,p}(x)}
        \cdot p\dist(x,y)
        ,
    \end{align*}
    where the index sets $S(\mu,y,x)$ or $S(\mu,x,y)$, respectively, have size at most $\Mult-1$.
    
Having settled these special cases, it remains to proceed from the inequality 
\begin{align*}
        \left| \lambda_{\mu}(x) - \lambda_{\mu}(y) \right| 
        \leq
        \max\left( 
            \frac{ \func_{S(\mu,x,y),p-1}(x) }{ \func_{\kappa,p}(x) }
            ,
            \frac{ \func_{S(\mu,y,x),p-1}(y) }{ \func_{\kappa,p}(y) } 
            ,
            \frac{ \func_{T(\mu,x,y),p-1}(x) }{ \func_{\kappa,p}(x) }
            ,
            \frac{ \func_{T(\mu,y,x),p-1}(y) }{ \func_{\kappa,p}(y) } 
        \right)
        \cdot 
        p \dist(x,y)
        ,
    \end{align*}
    where none of the index sets has size more than $\Mult-1$.
    We continue the proof by estimating the various fractions
    as they appear on the right-hand side of the preceding inequality.

One pathway forward is via comparisons of generalized means:
    \begin{align*}
        \frac{
            \func_{S(\mu,x,y),p-1}(x) 
        }{
            \func_{\kappa,p}(x)
        }
        \leq 
        \card{S(\mu,x,y)}^{\frac 1 p}
        \frac{ 
            \func_{S(\mu,x,y),p}(x)^{\frac{p-1}{p}}
        }{
            \func_{\kappa,p}(x)
        }
        \leq 
        \card{S(\mu,x,y)}^{\frac 1 p}
        \func_{\kappa,p}(x)^{-\frac 1 p}
        .
    \end{align*}
    Handling the other terms in the maximum similarly, 
    we arrive at a first estimate, where we can now assume all the set sizes bounded by $\Mult-1$: 
    \begin{align*}
        \left| \lambda_{\mu,p}(x) - \lambda_{\mu,p}(y) \right| 
        \leq 
        p 
        \max\left(
            \frac{ \card{S(\mu,x,y)} }{ \func_{\kappa,p}(x) }
            , 
            \frac{ \card{S(\mu,y,x)} }{ \func_{\kappa,p}(y) }
            , 
            \frac{ \card{T(\mu,x,y)} }{ \func_{\kappa,p}(x) }
            , 
            \frac{ \card{T(\mu,y,x)} }{ \func_{\kappa,p}(y) }
        \right)^{\frac 1 p}
        \dist(x,y)
        .
    \end{align*}    
    Concerning a lower bound for $\func_{\kappa,p}(z)^{\frac 1 p}$ as $z$ ranges over $X$,
    we already know that the largest value in the family $(\func_{\alpha}(z))_{\alpha\in\kappa}$ is at least $\Leb_{\ast}$, so $\func_{\kappa,p}(z)^{\frac 1 p} \geq \Leb_{\ast}$.
    If $X$ has the approximate midpoint property, then we even know that the largest two values in that family add up to at least $2\Leb_{\ast}$,
    and then another means inequality shows  
    $\func_{\kappa,p}(z)^{\frac 1 p} \geq 2^{\frac 1 p - 1} \cdot 2\Leb_{\ast}$.
    This completes the discussion of the first estimate.

    Another pathway is based on some alternative estimates:
    \begin{gather*}
        \frac{
            \func_{S(\mu,x,y),p-1}(x) 
        }{
            \func_{\kappa,p}(x)
        }
        \leq 
        \card{S(\mu,x,y)} 
        \max_{\alpha\in\kappa} 
        \frac{\func_{\alpha}(x)^{p-1}}{\func_{\kappa,p}(x)}
        .
\end{gather*}
Once again handling the other terms in the maximum similarly, 
    this leads to a second estimate: 
    \begin{align*}
        &
        \left| \func_{\mu,p}(x) - \func_{\mu,p}(y) \right| 
\leq 
        p
        \max\left(
            \card{S(\mu,x,y)} 
            , 
            \card{S(\mu,y,x)} 
            , 
            \card{T(\mu,x,y)} 
            , 
            \card{T(\mu,y,x)} 
        \right)
        \max_{ \substack{ \alpha\in\kappa \\ z \in \{x,y\} } }
        \frac{\func_{\alpha}(z)^{p-1}}{\func_{\kappa,p}(z)}
        \dist(x,y)
        .
    \end{align*}
    As in the first estimate, the set sizes in the second estimate are now assumed bounded by $\Mult-1$. 
    It remains to find upper bounds for a few remaining quantities. 
With regard to the second estimate, 
    we generally know that $\func_{\kappa,p} \geq \Leb_{\ast}^p$ over $X$, 
    implying 
    \begin{gather*}
        \max_{\alpha\in\kappa} 
        \max_{ z \in \{x,y\} } 
        \func_{\alpha}(z)^{p-1}\func_{\kappa,p}(z)^{-1} 
        \leq 
        \Leb_\ast^{-1}
        .
    \end{gather*}
    Let us from here on assume that $X$ satisfies the approximate midpoint property. 
    We then even know $\func_{\alpha} + \func_{\beta} \geq 2\Leb_\ast$, 
    where $\alpha, \beta \in \kappa$ such that $\func_\alpha(z) \geq \func_\beta(z) \geq \func_\gamma(z)$ for all other $\gamma \in \kappa \setminus \{\alpha,\beta\}$.
    We observe 
    \begin{gather*}
        \frac{
            \func_{\alpha}(z)^{p-1}
        }{
            \func_{\kappa,p}(z)
        }
        \leq 
        \frac{
            \func_{\alpha}(z)^{p-1}
        }{
            \func_{\alpha}(z)^{p} + \func_{\beta}(z)^{p}
        }
        \leq 
        \frac{
            2^{p-1}
            \func_{\alpha}(z)^{p-1}
        }{
            ( \func_{\alpha}(z) + \func_{\beta}(z) )^{p}
        }
        \leq 
        \frac{2^{p-1}}{2\Leb_\ast}
        .
    \end{gather*}
    If $\func_\alpha(z) \geq 2 \Leb_{\ast}$ or $p=1$, then 
    \begin{gather*}
        \frac{
            \func_{\alpha}(z)^{p-1}
        }{
            \func_{\kappa,p}(z)
        }
        \leq 
        \frac{
            \func_{\alpha}(z)^{p-1}
        }{
            \func_{\alpha,p}(z)
        }
        \leq 
        \frac{
            1
        }{
            \func_{\alpha}(z)
        }
        \leq 
        \frac{ 1 }{ 2\Leb_\ast }
        .
    \end{gather*}
    Otherwise, $\func_\alpha(z) = 2 \Leb_{\ast} \eta$ and $\func_\alpha(z) = 2 \Leb_{\ast} (1-\eta)$ for some $\eta \in [0.5,1]$, 
    and hence 
    \begin{gather*}
        \frac{
            \func_{\alpha}(z)^{p-1}
        }{
            \func_{\kappa,p}(z)
        }
        \leq 
        \frac{
            \func_{\alpha}(z)^{p-1}
        }{
            \func_{\alpha}(z)^{p} + \func_{\beta}(z)^{p}
        }
        \leq 
        \frac{1}{2\Leb_\ast}
        \cdot 
        \max_{\eta\in[0.5,1]}
        \frac{
            \eta^{p-1}
        }{
            \eta^{p} + ( 1 - \eta )^{p}
        }
        .
    \end{gather*}
    Assume that $p > 1$.
    The derivative of the quotient is a positive multiple of 
    \begin{gather*}
        (p-1) \eta^{p-2} ( \eta^{p} + ( 1 - \eta )^{p} )
        -
        p \eta^{p-1} ( \eta^{p-1} - ( 1 - \eta )^{p-1} )
        ,
    \end{gather*}
    \suppress{{\color{blue}
    The derivative of the quotient has the form 
    \begin{gather*}
        \frac{
            (p-1) \eta^{p-2} ( \eta^{p} + ( 1 - \eta )^{p} )
            -
            p \eta^{p-1} ( \eta^{p-1} - ( 1 - \eta )^{p-1} )
        }{
            ( \eta^{p} + ( 1 - \eta )^{p} )^{2}
        }
        ,
    \end{gather*}
    \begin{gather*}
        (p-1) \eta^{p-2} ( \eta^{p} + ( 1 - \eta )^{p} )
        =
        p \eta^{p-1} ( \eta^{p-1} - ( 1 - \eta )^{p-1} )
        \\
        (p-1) ( \eta^{p} + ( 1 - \eta )^{p} )
        =
        p \eta ( \eta^{p-1} - ( 1 - \eta )^{p-1} )
        \\
        p \eta^{p} + p ( 1 - \eta )^{p} - \eta^{p} - ( 1 - \eta )^{p} 
        =
        p \eta^{p} - p \eta ( 1 - \eta )^{p-1} 
        \\
        p ( 1 - \eta )^{p} - \eta^{p} - ( 1 - \eta )^{p} 
        =
        - p \eta ( 1 - \eta )^{p-1} 
        \\
        p ( 1 - \eta )^{p} - \eta^{p} - ( 1 - \eta )^{p-1} + \eta ( 1 - \eta )^{p-1}  
        =
        - p \eta ( 1 - \eta )^{p-1} 
        \\
        p ( 1 - \eta )^{p-1} - p \eta ( 1 - \eta )^{p-1} - \eta^{p} - ( 1 - \eta )^{p-1} + \eta ( 1 - \eta )^{p-1}  
        =
        - p \eta ( 1 - \eta )^{p-1} 
        \\
        p ( 1 - \eta )^{p-1} - p \eta ( 1 - \eta )^{p-1} - ( 1 - \eta )^{p-1} + \eta ( 1 - \eta )^{p-1} + p \eta ( 1 - \eta )^{p-1}  
        =
        \eta^{p}
        \\
        ( 1 - \eta )^{p-1}
        \left( \eta + p - 1 \right)
            =
        \eta^{p}
        .
    \end{gather*}}}which is positive at $0.5$ and negative at $1$. 
    So there exists a critical point $\eta_{\ast} \in (0.5,1)$ of the quotient,
    and one easily sees that any critical point is characterized by 
    \begin{align*}
        \eta_\ast^{p} = ( 1 - \eta_\ast )^{p-1} \left( \eta_\ast + p - 1 \right)
        &\quad\equivalent\quad 
        \left( \frac{ \eta_\ast }{ 1 - \eta_\ast } \right)^{p-1} = 1 + \frac{ p - 1 }{ \eta_\ast }
        \quad\equivalent\quad 
        \frac{ \eta_\ast }{ 1 - \eta_\ast } = \sqrt[p-1]{1 + \frac{ p - 1 }{ \eta_\ast } }
        .
\end{align*}
    The last equation characterizes $\eta_\ast$ as the intersection of two functions over $(0,1)$,
    one of which strictly increasing and the other one strictly decreasing,
    that must only occur within $(0.5,1)$. 
    So there exists a unique maximizer $\eta_\ast \in (0.5,1)$. 
    We will not compute $\eta_\ast$ but use lower bounds instead. First,    
    \begin{gather*}
        \frac{ \eta_\ast }{ 1 - \eta_\ast } \geq \sqrt[p-1]{p}
\quad\equivalent\quad 
        \eta_\ast \geq \frac{ \sqrt[p-1]{p} }{ 1 + \sqrt[p-1]{p} }
        .
    \end{gather*}
    Upon substituting $\tau_\ast = \eta_\ast (1-\eta_\ast)^{-1}$, 
    which means $\eta_\ast = \tau_\ast (1+\tau_\ast)^{-1}$, 
    the characterization reads 
    \begin{gather*}
        \tau_\ast^{p-1} = 1 + \frac{ p - 1 }{ \tau_\ast } ( 1 + \tau_\ast )
\quad\equivalent\quad 
        \tau_\ast^{p} = p \tau_\ast + (p-1)
        .
    \end{gather*}
    The latter leads us to an alternative lower bound 
    \begin{gather*}
        \tau_{\ast} \geq \sqrt[p]{2p-1}
        \quad\equivalent\quad 
        \eta_\ast \geq \frac{ \sqrt[p]{2p-1} }{ 1+\sqrt[p]{2p-1} }
        .
    \end{gather*}
    Irrespective of the value of $\eta_\ast$, we must have 
    \begin{align*}
        \max_{\eta\in[0.5,1]}
        \frac{
            \eta^{p-1}
        }{
            \eta^{p} + ( 1 - \eta )^{p}
        }
        &
        \leq  
        \frac{
            \eta_\ast^{p-1}
        }{
            \eta_\ast^{p} + \left( \eta_\ast + p - 1 \right)^{-1} ( 1 - \eta_\ast ) \eta_\ast^{p}
        }
= 
        \frac{p-1}{p} \eta_\ast^{-1} + \frac{1}{p}
        .
    \end{align*}
    \suppress{{\color{blue}
    \begin{align*}
        &
        \color{blue}
        \leq  
        \frac{
            \eta_\ast^{-1}
        }{
            1 + \left( \eta_\ast + p - 1 \right)^{-1} ( 1 - \eta_\ast ) 
        }
        \\&
        \color{blue}
        \leq  
        \frac{
            \eta_\ast^{-1} \left( \eta_\ast + p - 1 \right)
        }{
             \eta_\ast + p - 1 + 1 - \eta_\ast 
        }
        = \eta_\ast^{-1} \frac{p-1+\eta_\ast}{p}
        = \frac{p-1+\eta_\ast}{\eta_\ast p}
        = \frac{1}{\eta_\ast} - \frac{1-\eta_\ast}{\eta_\ast p}
        \\&
        \color{blue}
        = \frac{p-1+\eta_\ast}{\eta_\ast p}
        .
    \end{align*}
    }}The last expression is decreasing in $\eta_\ast$.
With the two lower bounds on $\eta_\ast$, 
the last remaining inequality is now evident, with
    \begin{gather*}
        C_p 
        := 
        \max_{\eta\in[0.5,1]}
        \frac{
            \eta^{p-1}
        }{
            \eta^{p} + ( 1 - \eta )^{p}
        }
        \leq 
        \left( 1 - \frac 1 p \right) 
        \min\left( 
\frac{ 1+\sqrt[p]{2p-1} }{ \sqrt[p]{2p-1} }
            ,
            \frac{ 1 + \sqrt[p-1]{p} }{ \sqrt[p-1]{p} }
        \right)
        + 
        \frac{1}{p}
        <
        2
        .
    \end{gather*}
    This completes our Lipschitz estimates for the partition of unity functions and their partial sums.
\end{proof}

\begin{remark}    
    The case $p=1$ arguably suffices for the purposes of Lipschitz analysis. 
    In practice, the role of the parameter $p$ is to control the ``smoothness'' of the partition of unity.
    For example, when the construction is carried out with an interval cover of the real line, 
    then $p > 1$ will force the partition of unity to have continuous derivatives.
    As a rule of thumb, increasing $p$ smoothes the partition of unity functions along the support boundaries but increases the internal gradients.
\end{remark}

\begin{remark}
    The explicit upper bound for the constant $C_{p}$ in the proof of Theorem~\ref{theorem:mainresult} can be improved when $p > 1$. 
    Its limit at $p=1$ equals one, which is optimal. 
    Solving the equation $\eta^{p} = ( 1 - \eta )^{p-1} \left( \eta + p - 1 \right)$ would provide a better estimate. 
    We do not pursue that topic any further. 
\end{remark}

Let us attend to the vectorization and its Lipschitz properties. 
Algebraically, the vectorization is a mapping $\Lambda : X \to \bbR^{(\kappa)}$, 
where $\kappa$ always denotes the index set of the partition of unity.
To study its Lipschitz properties, we first need to agree on a metric over its target space. 
For example, irrespective of $\kappa$'s cardinality,
we can equip $\bbR^{(\kappa)}$ with various well-known norms 
such as the Lebesgue norms 
\begin{align*}
    \| z \|_{\ell^\infty} 
    &:= 
    \| z \|_{\max} 
    := 
    \max_{ \alpha \in \kappa } |z_\alpha|,
    \quad 
    z \in \bbR^{(\kappa)},
    \\
    \| z \|_{\ell^q} 
    &:= 
    \left( \inlinesum_{\alpha\in\kappa} |z_\alpha|^q \right)^{\frac 1 q},
    \quad 
    z \in \bbR^{(\kappa)},
    \quad 
    q \in [1,\infty).
\end{align*}
Given $q \in [1,\infty]$, we always let $\bbR^{(\kappa)}_{\ell^{q}}$ denote the space $\bbR^{(\kappa)}$ equipped with the $\ell^{q}$ norm.\footnote{In what follows, we use the convention that $1/q = 0$ and $\sqrt[q]{a} = 1$ for any $a > 0$ in the case $q = \infty$.}

\begin{proposition}
    Let $\Lambda = ( \lambda_{\alpha} )_{\alpha \in \kappa}$ be a partition of unity over the metric space $X$ and subordinate to an open cover $\calU = ( U_{\alpha} )_{\alpha \in \kappa}$. 
    Suppose that for some constants $L, L_{\infty} \geq 0$ such that 
    \begin{gather*}
        \forall \mu \subseteq \kappa : 
        \forall x,y \in X : 
        \left| \lambda_{\mu}(x) - \lambda_{\mu}(y) \right|
        \leq
        L \dist(x,y)
        ,
        \\
        \forall \alpha \in \kappa : 
        \forall x,y \in X : 
        \left| \lambda_{\alpha}(x) - \lambda_{\alpha}(y) \right|
        \leq
        L_{\infty} \dist(x,y)
        .
    \end{gather*}
    If $q \in [1,\infty]$, then 
    \begin{gather*}
        \forall x,y \in X : 
        \left\| \Lambda(x) - \Lambda(y) \right\|_{\ell^q}
        \leq
        \sqrt[q]{2} 
        L^{1/q} L_{\infty}^{1-1/q} 
        \dist(x,y)
        .
    \end{gather*}
\end{proposition}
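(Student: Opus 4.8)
The plan is to obtain the bound by a Hölder-type interpolation between the endpoints $q=1$ and $q=\infty$, feeding in the two hypothesized Lipschitz estimates. Fix $x,y\in X$ and abbreviate $a_\alpha:=\left|\lambda_\alpha(x)-\lambda_\alpha(y)\right|$ for $\alpha\in\kappa$; since $\Lambda$ is a partition of unity, only finitely many $a_\alpha$ are nonzero, so all sums below are finite and no convergence issue arises.

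The endpoint $q=\infty$ is immediate: $\left\|\Lambda(x)-\Lambda(y)\right\|_{\ell^\infty}=\max_{\alpha\in\kappa}a_\alpha\le L_\infty\dist(x,y)$ by hypothesis, and this is exactly the asserted inequality once one recalls the conventions $1/q=0$ and $\sqrt[q]{a}=1$ in the case $q=\infty$.

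The one step requiring an idea is $q=1$. Partition the index set according to the sign of $\lambda_\alpha(x)-\lambda_\alpha(y)$: set $\mu^{+}:=\{\alpha\in\kappa\suchthat\lambda_\alpha(x)\ge\lambda_\alpha(y)\}$ and $\mu^{-}:=\kappa\setminus\mu^{+}$. Then
\begin{align*}
    \left\|\Lambda(x)-\Lambda(y)\right\|_{\ell^1}
    &=
    \sum_{\alpha\in\mu^{+}}\left(\lambda_\alpha(x)-\lambda_\alpha(y)\right)
    +
    \sum_{\alpha\in\mu^{-}}\left(\lambda_\alpha(y)-\lambda_\alpha(x)\right)
    \\&=
    \left(\lambda_{\mu^{+}}(x)-\lambda_{\mu^{+}}(y)\right)
    +
    \left(\lambda_{\mu^{-}}(y)-\lambda_{\mu^{-}}(x)\right)
    \le
    2L\dist(x,y),
\end{align*}
the final inequality being the partial-sum hypothesis applied to $\mu^{+}$ and to $\mu^{-}$, both bracketed differences being nonnegative by construction.

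For $q\in(1,\infty)$ I would interpolate factor by factor. Bounding the exponent-$(q-1)$ factor pointwise by $a_\alpha\le L_\infty\dist(x,y)$ and then summing gives
\begin{align*}
    \left\|\Lambda(x)-\Lambda(y)\right\|_{\ell^q}^q
    =
    \sum_{\alpha\in\kappa}a_\alpha\, a_\alpha^{q-1}
    &\le
    \left(L_\infty\dist(x,y)\right)^{q-1}\sum_{\alpha\in\kappa}a_\alpha
    \\&\le
    2L\,L_\infty^{\,q-1}\dist(x,y)^{q},
\end{align*}
where the last step uses the $q=1$ estimate just established. Taking $q$-th roots and using $(q-1)/q=1-1/q$ gives $\left\|\Lambda(x)-\Lambda(y)\right\|_{\ell^q}\le\sqrt[q]{2}\,L^{1/q}L_\infty^{1-1/q}\dist(x,y)$, which moreover recovers the $q=1$ and $q=\infty$ cases as limits. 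I anticipate no real obstacle here: the argument is a routine splitting, and the only point to keep in mind is the finiteness of the sums, which follows at once from $\Lambda$ being a partition of unity.
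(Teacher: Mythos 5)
Your proof is correct and follows essentially the same route as the paper: bound the $\ell^1$ distance by splitting the index set according to the sign of $\lambda_\alpha(x)-\lambda_\alpha(y)$ and using the partial-sum hypothesis, bound the $\ell^\infty$ distance by the single-function hypothesis, and interpolate. The only cosmetic differences are that the paper uses $\lambda_{\kappa\setminus\mu}=1-\lambda_{\mu}$ to write the $\ell^1$ norm as $2(\lambda_{\mu}(x)-\lambda_{\mu}(y))$ and invokes it once, and cites Littlewood's inequality where you prove the $\ell^1$--$\ell^\infty$ interpolation inline.
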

\begin{proof}
    Let $x, y \in X$.
Without loss of generality, $\Lambda(x) \neq \Lambda(y)$. 
    For further discussion, we define 
    \begin{align*}
        \mu := \left\{ \alpha \in \kappa \suchthat \lambda_{\alpha}(x) > \lambda_{\alpha}(y) \right\}.
    \end{align*}
    It then follows that  
    \begin{align*}
        \| \Lambda(x) - \Lambda(y) \|_{\ell^1}
        &= 
        \sum_{ \alpha \in \mu} ( \lambda_{\alpha}(x) - \lambda_{\alpha}(y) )
        -
        \sum_{ \alpha \in \kappa\setminus\mu} ( \lambda_{\alpha}(x) - \lambda_{\alpha}(y) )
        \\&= 
        \lambda_{\mu}(x) - \lambda_{\mu}(y) - \lambda_{\kappa\setminus\mu}(x) + \lambda_{\kappa\setminus\mu}(y)
        \\&= 
        \lambda_{\mu}(x) - \lambda_{\mu}(y) - ( 1 - \lambda_{\mu}(x) ) + (  1 - \lambda_{\mu}(y) )
        =
        2\lambda_{\mu}(x) - 2\lambda_{\mu}(y)
        .
    \end{align*}
    Consequently,
    \begin{align*}
        \| \Lambda(x) - \Lambda(y) \|_{\ell^1} \leq 2L \dist(x,y)
        ,
        \quad 
        \| \Lambda(x) - \Lambda(y) \|_{\ell^\infty} \leq L_{\infty} \dist(x,y)
        .
    \end{align*}
    Due to the generalized H\"older inequality, also known as Littlewood's inequality~\cite{garling2007inequalities},  \begin{align*}
        \| \Lambda(x) - \Lambda(y) \|_{\ell^q}
        &\leq 
        \| \Lambda(x) - \Lambda(y) \|_{\ell^1}^{1/q}
        \cdot
        \| \Lambda(x) - \Lambda(y) \|_{\ell^\infty}^{1-1/q}
        \suppress{
        \\&\leq 
        \sqrt[q]{2} L^{1/q} \dist(x,y)^{1/q}
        \cdot
        L_{\infty}^{1-1/q} \dist(x,y)^{1-1/q}
        }
        .
    \end{align*}
    This completes the estimates for the vectorization.
\end{proof}

\begin{corollary}[{compare~\cite[Proposition~1]{bell2003property}}]\label{corollary:vectorizationestimate}
    For every $\Mult \geq 1$ and $q \in [1,\infty]$,
    there exists $C(\Mult,q) \leq \sqrt[q]{2} \max(1,\Mult-1)$
    such that every open cover $\calU = (U_\alpha)_{\alpha\in\kappa}$ of a metric space $X$
    with Lebesgue number $\Leb$ and multiplicity $\Mult$
    admits a partition of unity whose vectorization 
    $\Lambda : X \rightarrow \bbR^{(\kappa)}_{\ell^{q}}$
    has Lipschitz constant $C(\Mult,q)/\Leb$.
    If $X$ has the approximate midpoint property,
    then $C(\Mult,q) \leq \sqrt[q]{2} (\Mult-1) / 2$. 
\end{corollary}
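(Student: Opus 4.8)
The plan is to read this off from Theorem~\ref{theorem:mainresult} with $p=1$ together with the Proposition immediately preceding this corollary. A cover of multiplicity $\Mult\in\bbN$ is in particular pointwise finite, so Theorem~\ref{theorem:mainresult} applies; with $p=1$ the functions $\lambda_\alpha(x)=\func_\alpha(x)\big/\sum_{\beta\in\kappa}\func_\beta(x)$ form a continuous partition of unity $\Lambda$ subordinate to $\calU$, and the partial-sum estimate there states
\begin{gather*}
    \forall \mu\subseteq\kappa:\ \forall x,y\in X:\ \left|\lambda_\mu(x)-\lambda_\mu(y)\right|\leq \frac{\max(1,\Mult-1)}{\Leb}\,\dist(x,y).
\end{gather*}
Taking $\mu=\{\alpha\}$ shows that the same quantity bounds $|\lambda_\alpha(x)-\lambda_\alpha(y)|$ for each single index. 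Hence the two hypotheses of the Proposition hold with $L=L_\infty=\max(1,\Mult-1)/\Leb$.

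Feeding these into the Proposition, and using the stated conventions $1/q=0$ and $\sqrt[q]{2}=1$ when $q=\infty$, we get for all $q\in[1,\infty]$ that
\begin{gather*}
    \forall x,y\in X:\ \left\|\Lambda(x)-\Lambda(y)\right\|_{\ell^q}\leq \sqrt[q]{2}\,L^{1/q}L_\infty^{1-1/q}\,\dist(x,y)=\frac{\sqrt[q]{2}\,\max(1,\Mult-1)}{\Leb}\,\dist(x,y),
\end{gather*}
so $C(\Mult,q):=\sqrt[q]{2}\,\max(1,\Mult-1)$ works, i.e.\ $\Lambda:X\to\bbR^{(\kappa)}_{\ell^q}$ has Lipschitz constant $C(\Mult,q)/\Leb$.

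For the sharpened bound under the approximate midpoint property, I would invoke instead the stronger conclusions of Theorem~\ref{theorem:mainresult}: at $p=1$ both of its improved estimates reduce to $|\lambda_\mu(x)-\lambda_\mu(y)|\leq (\Mult-1)/(2\Leb)\cdot\dist(x,y)$ --- for the second one because $C_1=1$, which is immediate from the description of $C_p$ in that proof. So in this case one may take $L=L_\infty=(\Mult-1)/(2\Leb)$, and the Proposition yields $\|\Lambda(x)-\Lambda(y)\|_{\ell^q}\leq \sqrt[q]{2}\,(\Mult-1)/(2\Leb)\cdot\dist(x,y)$, that is, $C(\Mult,q)\leq\sqrt[q]{2}\,(\Mult-1)/2$.

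I do not expect any real obstacle: the corollary merely repackages two results already in hand. The few points worth a sentence of care are (i) noting that finite multiplicity forces pointwise finiteness, so that Theorem~\ref{theorem:mainresult} may be applied; (ii) the degenerate case $\Mult=1$ under the approximate midpoint property, where the asserted bound $C(\Mult,q)=0$ must be checked --- but the proof of Theorem~\ref{theorem:mainresult} shows $X\notin\calU$ is then impossible, and when $X\in\calU$ the partition of unity is locally constant, so $\Lambda$ is constant and the bound is trivial; and (iii) keeping the $q=\infty$ conventions consistent. The comparison with \cite[Proposition~1]{bell2003property} is only a remark recording the improvement over the previously published constant and requires no argument.
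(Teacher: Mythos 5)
Your proposal is correct and follows exactly the route the paper intends: the corollary is an immediate combination of Theorem~\ref{theorem:mainresult} at $p=1$ (giving $L=L_\infty=\max(1,\Mult-1)/\Leb$, respectively $(\Mult-1)/(2\Leb)$ under the approximate midpoint property, noting $C_1=1$) with the preceding proposition's bound $\sqrt[q]{2}\,L^{1/q}L_\infty^{1-1/q}$ for the vectorization. Your side remarks (finite multiplicity implies pointwise finiteness, the $q=\infty$ conventions, and the degenerate case $\Mult=1$) are accurate and only add care to what the paper leaves implicit.
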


Let us now slightly change the perspective. 
    If we define a partition of unity of the form as Theorem~\ref{theorem:mainresult} with some parameter $p \in [1,\infty)$, 
    then the collection of their $p$-th roots at any point defines vector on the Lebesgue $p$-norm unit sphere. 
    In the limit as $p$ goes to infinity, these roots become
    \begin{align*}
\lim_{ p \rightarrow \infty }
        \left( 
            \frac{ \func_\alpha(x)^{p} }{ \inlinesum_{\beta\in\kappa} \func_\beta(x)^{p} }
        \right)^{\frac 1 p}
        =
        \frac{ \func_\alpha(x) }{ \max_{\beta\in\kappa} \func_\beta(x) },
        \quad 
        \alpha \in \kappa.
    \end{align*}
    The latter defines a family of functions, indexed over $\alpha \in \kappa$, that have maximum $1$ everywhere. 
    They have Lipschitz constant at most $2/\Leb_{\ast}$. 
    Inspired by that, we show that the $p$-th roots of the partition of unity functions are Lipschitz too.
    \suppress{
    \[
        f(x)/g(x) - f(y)/g(y)
        =
        \frac 1 {g(x)} \left( f(x) - f(y) \right)
        +
        f(y) \left( \frac 1 {g(x)} - \frac 1 {g(y)} \right)
        =
        \frac 1 {g(x)} \left( f(x) - f(y) \right)
        +
        f(y) \frac { g(x) - g(y) } {g(x)g(y)} 
        \leq 
        \frac 1 {L} \left( f(x) - f(y) \right)
        +
        \frac { g(x) - g(y) } {L} 
        \leq 
        2 / L
        .
    \]
    }

\begin{proposition}\label{proposition:roots} Let $\calU = ( U_\alpha )_{\alpha \in \kappa}$ be a pointwise finite open cover of $X$ with Lebesgue number $\Leb > 0$ and multiplicity $\Mult \in \bbN$.
Let $p \in [1,\infty)$. Then the functions
    \begin{align*}
        \lambda_{\alpha,p}(x) = \frac{ \func_\alpha(x)^{p} }{ \sum_{\beta\in\kappa} \func_\beta(x)^{p} },
        \quad 
        \alpha \in \kappa,
    \end{align*}
    constitute a continuous partition of unity $\Lambda = ( \lambda_{\alpha,p} )_{\alpha \in \kappa}$ subordinate to $\calU$.
    We have 
    \begin{gather*}
        \forall \mu \subseteq \kappa :
        \forall x,y \in X : 
        \left| \lambda_{\mu,p}(x)^{\frac 1 p} - \lambda_{\mu,p}(y)^{\frac 1 p} \right| 
        \leq 
        2^{\frac{p-1}{p}}
        \frac{(2\Mult-1)^{\frac 1 p}}{\Leb} \dist(x,y)
        .
    \end{gather*}
    If the approximate midpoint property holds, then 
    \begin{gather*}
        \forall \mu \subseteq \kappa :
        \forall x,y \in X : 
        \left| \lambda_{\mu,p}(x)^{\frac 1 p} - \lambda_{\mu,p}(y)^{\frac 1 p} \right| 
        \leq 
        2^{\frac{p-1}{p}}
        \frac{(2\Mult-1)^{\frac 1 p}}{2^{\frac 1 p}\Leb} \dist(x,y)
        .
    \end{gather*}
\end{proposition}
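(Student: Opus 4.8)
The plan is to reduce the two Lipschitz estimates to one algebraic identity. Write $N_\nu(z):=\func_{\nu,p}(z)^{1/p}=\|(\func_\alpha(z))_{\alpha\in\nu}\|_{\ell^p}$ for $\nu\subseteq\kappa$, so that $\lambda_{\mu,p}(z)^{1/p}=N_\mu(z)/N_\kappa(z)$ and $N_\kappa(z)^p=N_\mu(z)^p+N_{\kappa\setminus\mu}(z)^p$. That $\Lambda=(\lambda_{\alpha,p})_{\alpha\in\kappa}$ is a continuous partition of unity subordinate to $\calU$ is already contained in Theorem~\ref{theorem:mainresult}, so only the Lipschitz bounds are new. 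As in that proof, I would first dispose of the case $X\in\calU$, in which each $\lambda_{\mu,p}$ is constant, and then of the case $\dist(x,y)\ge c_\ast\Leb$, where $c_\ast:=2$ if $X$ has the approximate midpoint property and $c_\ast:=1$ otherwise: since $\lambda_{\mu,p}^{1/p}\in[0,1]$ and, for $\Mult\ge1$ and $p\in[1,\infty)$, both $2^{1-1/p}(2\Mult-1)^{1/p}\ge1$ and $2^{2-2/p}(2\Mult-1)^{1/p}\ge1$, the inequalities are then immediate. It remains to treat $\dist(x,y)<c_\ast\Leb$, and exactly as in the proof of Theorem~\ref{theorem:mainresult} — in the approximate-midpoint case after passing to a point $z$ with $\max(\dist(x,z),\dist(z,y))<\Leb$ — this yields a member $U_\beta$ of $\calU$ with $x,y\in U_\beta$.

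The heart of the matter is the following manipulation, whose point is that one never divides by $N_\mu$, which may vanish. Abbreviating $A:=N_\mu(x)$, $A':=N_{\kappa\setminus\mu}(x)$, $C:=N_\mu(y)$, $C':=N_{\kappa\setminus\mu}(y)$, so that $N_\kappa(x)=\|(A,A')\|_{\ell^p}$ and $N_\kappa(y)=\|(C,C')\|_{\ell^p}$, I would write
\begin{align*}
    \lambda_{\mu,p}(x)^{\frac 1 p}-\lambda_{\mu,p}(y)^{\frac 1 p}
    &=
    \frac{A\,\|(C,C')\|_{\ell^p}-C\,\|(A,A')\|_{\ell^p}}{N_\kappa(x)\,N_\kappa(y)}
    \\&=
    \frac{\|(AC,AC')\|_{\ell^p}-\|(AC,CA')\|_{\ell^p}}{N_\kappa(x)\,N_\kappa(y)}
    .
\end{align*}
The two vectors in the numerator share their first coordinate, so the reverse triangle inequality bounds the numerator by $|AC'-CA'|$; expanding $AC'-CA'=A(C'-A')+A'(A-C)$ and using $A,A'\le N_\kappa(x)$ then gives
\begin{align*}
    \left|\lambda_{\mu,p}(x)^{\frac 1 p}-\lambda_{\mu,p}(y)^{\frac 1 p}\right|
    \le
    \frac{|A-C|+|A'-C'|}{N_\kappa(y)}
    .
\end{align*}
By symmetry one could equally divide by $N_\kappa(x)$, so only a lower bound for one of $N_\kappa(x),N_\kappa(y)$ will be needed.

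The three remaining quantities are controlled by standard estimates. Each $\func_\alpha$ is $1$-Lipschitz, so the reverse triangle inequality for $\|\cdot\|_{\ell^p}$ gives $|A-C|^p+|A'-C'|^p\le\sum_{\alpha\in\kappa}|\func_\alpha(x)-\func_\alpha(y)|^p$; since $x$ and $y$ both lie in $U_\beta$ and $\calU$ has multiplicity $\Mult$, the index set $\{\alpha\in\kappa\suchthat\func_\alpha(x)\ne0\text{ or }\func_\alpha(y)\ne0\}$ — outside of which every summand vanishes — has at most $\Mult+\Mult-1=2\Mult-1$ elements, each surviving summand being at most $\dist(x,y)^p$, so $|A-C|^p+|A'-C'|^p\le(2\Mult-1)\dist(x,y)^p$. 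The two-term power-mean inequality $|A-C|+|A'-C'|\le2^{1-1/p}(|A-C|^p+|A'-C'|^p)^{1/p}$ then gives $|A-C|+|A'-C'|\le2^{1-1/p}(2\Mult-1)^{1/p}\dist(x,y)$. For the denominator, $N_\kappa(y)^p=\func_{\kappa,p}(y)\ge\sup_\alpha\func_\alpha(y)^p\ge\Leb_\ast^p\ge\Leb^p$ by the first assertion of Lemma~\ref{lemma:estimateforlambdasum}, which yields the first bound; under the approximate midpoint property the second assertion of that lemma gives $\func_{\alpha_1}(y)+\func_{\alpha_2}(y)\ge2\Leb_\ast$ for the two largest values, so by convexity of $t\mapsto t^p$ one gets $N_\kappa(y)^p\ge\func_{\alpha_1}(y)^p+\func_{\alpha_2}(y)^p\ge2\Leb_\ast^p\ge2\Leb^p$, i.e.\ $N_\kappa(y)\ge2^{1/p}\Leb$, which yields the second bound.

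I expect the only delicate point to be getting the constant sharp, namely making the multiplicity count come out as $2\Mult-1$ rather than the naive $2\Mult$; this is exactly what forces the preliminary reduction to $\dist(x,y)<c_\ast\Leb$ and the extraction of the common member $U_\beta$, together with the short approximate-midpoint argument that makes this reduction reach distance $2\Leb$ in the second case. The remaining ingredients — the algebra of the second paragraph and the reverse-triangle, power-mean, and convexity inequalities — are routine.
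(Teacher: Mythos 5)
Your proof is correct, and while it follows the same overall skeleton as the paper's (rewrite the difference of $p$-th roots as a single fraction, bound the numerator by the two differences $|\func_{\mu,p}(x)^{1/p}-\func_{\mu,p}(y)^{1/p}|$ and $|\func_{\kappa\setminus\mu,p}(x)^{1/p}-\func_{\kappa\setminus\mu,p}(y)^{1/p}|$ via the reverse triangle inequality and the $1$-Lipschitz property, and bound the denominator through Lemma~\ref{lemma:estimateforlambdasum}), the two key steps are done by genuinely different means. For the numerator, the paper uses the non-increasing derivative of $t\mapsto t^{1/p}$ to replace $\func_{\kappa,p}$ by $\func_{\kappa\setminus\mu,p}$ inside the roots and then the means inequality $\func_{\mu,p}(y)^{1/p}+\func_{\kappa\setminus\mu,p}(y)^{1/p}\le 2^{(p-1)/p}\func_{\kappa,p}(y)^{1/p}$; you instead use homogeneity of the two-term $\ell^p$ norm plus the reverse triangle inequality on two vectors sharing a coordinate, cancel a factor via $N_\mu(x),N_{\kappa\setminus\mu}(x)\le N_\kappa(x)$, and recover the same $2^{(p-1)/p}$ from the power-mean inequality applied to the two differences — a slightly slicker identity that avoids the monotone-derivative trick. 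For the count $2\Mult-1$, the paper needs no restriction on $\dist(x,y)$: it observes the relevant cardinalities are at most $2\Mult$, and equal $2\Mult$ only when $\lambda_{\mu,p}(x)=\lambda_{\mu,p}(y)$, so those cases are vacuous. You instead import the reduction from Theorem~\ref{theorem:mainresult}: dispose of $\dist(x,y)\ge c_\ast\Leb$ using that the claimed Lipschitz bounds exceed $1/(c_\ast\Leb)$, then extract a common member $U_\beta$ (via the approximate-midpoint argument when $c_\ast=2$), which puts $\beta$ in both positivity sets and caps the union at $2\Mult-1$. Your route costs an extra case analysis that the paper's proof of this proposition does not need, but it is executed correctly and yields exactly the stated constants; the denominator bounds ($\func_{\kappa,p}\ge\Leb^p$ in general, $\ge 2\Leb^p$ by convexity under the approximate midpoint property) coincide with the paper's.
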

\begin{proof}
    We begin with some algebraic reformulation:
\begin{align*}
        \lambda_{\mu,p}(x)^{\frac 1 p} - \lambda_{\mu,p}(y)^{\frac 1 p}
        &
        =
        \frac{\sqrt[p]{\func_{\mu,p}(x)}}{\sqrt[p]{\func_{\kappa,p}(x)}}
        -
        \frac{\sqrt[p]{\func_{\mu,p}(y)}}{\sqrt[p]{\func_{\kappa,p}(y)}}
=
        \frac{
            \sqrt[p]{ \func_{\mu,p}(x) \func_{\kappa,p}(y) } 
            -
            \sqrt[p]{ \func_{\mu,p}(y) \func_{\kappa,p}(x) }
        }{
            \sqrt[p]{\func_{\kappa,p}(x)}
            \sqrt[p]{\func_{\kappa,p}(y)}
        }
        .
    \end{align*}
    We assume without loss of generality that $\lambda_{\mu,p}(x) \geq \lambda_{\mu,p}(y)$,
    since otherwise we can simply switch the role of $x$ and $y$. 
    The derivative of the $p$-th root is non-increasing, and thus 
    \begin{align*}
        &
        \sqrt[p]{ \func_{\mu,p}(x) \func_{\kappa,p}(y) } 
        -
        \sqrt[p]{ \func_{\mu,p}(y) \func_{\kappa,p}(x) } 
\leq 
        \sqrt[p]{ \func_{\mu,p}(x) \func_{\kappa\setminus\mu}(y) } 
        -
        \sqrt[p]{ \func_{\mu,p}(y) \func_{\kappa\setminus\mu}(x) } 
        \\&
        \qquad
		= 
        \sqrt[p]{ \func_{\mu,p}(x) \func_{\kappa\setminus\mu}(y) } 
        -
        \sqrt[p]{ \func_{\mu,p}(y) \func_{\kappa\setminus\mu}(y) } 
        +
        \sqrt[p]{ \func_{\mu,p}(y) \func_{\kappa\setminus\mu}(y) } 
        -
        \sqrt[p]{ \func_{\mu,p}(y) \func_{\kappa\setminus\mu}(x) } 
        .
    \end{align*}
    Therefore, 
    \begin{align*}
        &
        \lambda_{\mu,p}(x)^{\frac 1 p} - \lambda_{\mu,p}(y)^{\frac 1 p}
\leq
        \frac{
            \Big| \sqrt[p]{ \func_{\mu,p}(x) } - \sqrt[p]{ \func_{\mu,p}(y) } \Big| 
            \sqrt[p]{ \func_{\kappa\setminus\mu}(y) } 
            + 
            \Big| \sqrt[p]{ \func_{\kappa\setminus\mu}(x) } - \sqrt[p]{ \func_{\kappa\setminus\mu}(y) } \Big|
            \sqrt[p]{ \func_{\mu,p}(y) } 
        }{
            \sqrt[p]{\func_{\kappa,p}(x)}
            \sqrt[p]{\func_{\kappa,p}(y)}
        }
        .
    \end{align*}
	Next, 
	$\sqrt[p]{ \func_{\mu,p}(y) } + \sqrt[p]{ \func_{\kappa\setminus\mu,p}(y) } \leq 2^{\frac{p-1}{p}} \sqrt[p]{\func_{\kappa,p}(y)}$ by a generalized means inequality.
Given any $\mu \subseteq \kappa$, the reverse triangle inequality and another generalized means inequality lead to 
    \begin{align*}
        \left| 
            \func_{\mu,p}(x)^{\frac 1 p} - \func_{\mu,p}(y)^{\frac 1 p} 
        \right|
        &
        \leq 
        \left( \sum_{\alpha\in\mu} \left| \func_{\alpha}(x) - \func_{\alpha}(y) \right|^{p} \right)^{\frac 1 p}
\leq 
        \card{\{ \alpha \in \mu \suchthat \func_{\alpha}(x) + \func_{\alpha}(y) > 0 \}}^{\frac 1 p}
        \cdot 
        \dist(x,y)
        .
    \end{align*}
    Similarly,  
    \begin{align*}
\left| 
            \func_{\kappa\setminus\mu,p}(x)^{\frac 1 p} - \func_{\kappa\setminus\mu,p}(y)^{\frac 1 p} 
        \right|
        &
        \leq 
        \card{\{ \alpha \in \kappa\setminus\mu \suchthat \func_{\alpha}(x) + \func_{\alpha}(y) > 0 \}}^{\frac 1 p}
        \cdot 
        \dist(x,y)
        .
    \end{align*}
    The two cardinalities used above are at most $2\Mult$. 
    The first set has size $2\Mult$ only when $\lambda_{\mu,p}(x) = \lambda_{\mu,p}(y) = 1$.
    The second set has size $2\Mult$ only when $\lambda_{\mu,p}(x) = \lambda_{\mu,p}(y) = 0$.
Lastly, $\func_{\kappa,p}(x) \geq \Leb^{p}$, and if the approximate midpoint property holds,
    then $\func_{\kappa,p}(x)^{\frac 1 p} \geq 2^{-\frac{p-1}{p}} \cdot 2\Leb$. 
    The desired result thus follows.
\end{proof}

\begin{example}
    Constructing the partition of unity via the distance functions of the cover is very common in the literature and known as the \emph{canonical partition of unity}~\cite{brodskiy2005coarse}.
    The following example shows that our Lipschitz estimate for that partition of unity is optimal for general metric spaces. 
    We let $X = [-\Leb,\Leb] \cup \{\extrapoint\}$ be the disjoint union of a closed interval with the canonical metric and a one-point metric space $\{\extrapoint\}$,
    and we define $\dist(x,x_{\ast}) = \Leb$ for all $x \in [-\Leb,\Leb]$.
The open sets 
    \begin{align*}
U_1 = [-\Leb,\Leb],
        \quad 
U_2 = \dots = U_\Mult = (0,\Leb] \cup \{\extrapoint\},
    \end{align*}
    form an open cover $\calU$ of $X$. 
    By construction, $\calU$ has optimal Lebesgue number $\Leb$ and multiplicity $\Mult$.
We construct the standard partition of unity subordinate to this cover.
    To estimate the Lipschitz constant, we only need to study its behavior over $[0,\Leb]$,
	where 
    \antisuppress{$\lambda_1(0) = 1$ and $\lambda_2(0) = \dots = \lambda_M(0) = 0$ as well as }
    \begin{gather*}
        \suppress{\lambda_1(0) = 1,
        \quad
        \lambda_2(0) = \dots = \lambda_M(0) = 0, 
        \\}
        \lambda_1(\epsilon) = \frac{\Leb}{\Leb + (\Mult-1)\epsilon},
        \quad 
        \lambda_2(\epsilon) = \dots = \lambda_M(\epsilon) 
        = 
        \frac{\epsilon}{\Leb + (\Mult-1)\epsilon}
    \end{gather*}
    for any $\epsilon \in (0,\Leb]$.
    Asymptotically, as $\epsilon$ goes to zero, the quotient on the right-hand side of 
    \begin{align*}
        \frac{ \lambda_1(0) - \lambda_1(\epsilon) }{ \epsilon }
= 
        \frac{(\Mult-1)}{\Leb + (\Mult-1)\epsilon} 
\end{align*}
grows towards the proposed upper bound $(\Mult-1)/\Leb$.
    Note that $X$ does not satisfy the approximate midpoint property. 
Concerning the vectorization, given $q \in [1,\infty)$, we calculate 
    \begin{align*}
        \| \Lambda(0) - \Lambda(\epsilon) \|_q
        &
        =
( \Leb + (\Mult-1)\epsilon)^{-1}
        ( \Mult-1 ) 
        \sqrt[q]{ 1 + (\Mult-1)^{1-q} }
        \cdot
        \epsilon
        .
    \end{align*}
In the case $q=1$ and in the limit $q \to \infty$, we realize the proposed upper bound as $\epsilon$ vanishes. 
    In between, for $1 < q < \infty$ and $\Mult \geq 3$, we overestimate by a factor of at most $\sqrt[q]{2}$.
\end{example}

\begin{example}
    Let us now discuss a simple example where the metric space $X$ enjoys the approximate midpoint property. 
    Let $X = \bbR$ be the real line and consider the open cover given by
    \begin{align*}
        U_1 = (-\infty,2\Leb), 
        \quad 
        U_2 = \dots = U_\Mult = (0,\infty).
    \end{align*}
    By construction, the optimal Lebesgue number is $\Leb$ and the multiplicity is $\Mult$.
    Now, 
    \antisuppress{$\lambda_{1}(0) = 1$ and $\lambda_2(0) = \dots = \lambda_M(0) = 0$ as well as }
    \begin{gather*}
        \suppress{\lambda_1(0) = 1
        , 
        \quad 
        \lambda_2(0) = \dots = \lambda_M(0) = 0, 
        ,
        \\}
        \lambda_1(\epsilon) = \frac{2\Leb-\epsilon}{2\Leb-\epsilon + (\Mult-1)\epsilon}
        ,
        \quad 
        \lambda_2(\epsilon) = \dots = \lambda_M(\epsilon) 
        = 
        \frac{\epsilon}{2\Leb-\epsilon + (\Mult-1)\epsilon}
\end{gather*}
    for $\epsilon > 0$ small.
    Asymptotically, as $\epsilon$ approaches zero, the quotient on the right-hand side of 
    \begin{align*}
        \lambda_1(0) - \lambda_1(\epsilon) 
= 
        \frac{(\Mult-1)}{2\Leb-\epsilon + (\Mult-1)\epsilon}\epsilon
\end{align*}
    grows towards the proposed upper bound $(\Mult-1)/2\Leb$. 
    Moreover, given $q \in [1,\infty)$, one calculates
    \begin{align*}
        \| \Lambda(0) - \Lambda(\epsilon) \|_q
        &
        =
( 2\Leb-\epsilon + (\Mult-1)\epsilon)^{-1}
        ( \Mult-1 ) 
        \sqrt[q]{ 1 + (\Mult-1)^{1-q} }
        \epsilon
        ,
    \end{align*}
    and the value when $q = \infty$ is again obtained from a limit. 
    These observations are analogous to the previous example, 
    except that the estimate is lower due to the approximate midpoint property.
\end{example}

\begin{remark} 
    Let us now address some prospective improvements of these results. 
    It seems likely that Theorem~\ref{theorem:mainresult} and Proposition~\ref{proposition:roots} can be improved when $p > 1$.
More conceptually, 
    higher multiplicity of the initial cover should actually \emph{decrease} upper bounds for Lipschitz constants of subordinate partitions of unity
    because redundancy allows for greater flexibility in choosing the partition functions. 
    Obviously, any function with Lipschitz constant $L \geq 0$ is the sum of $K$ identical functions with Lipschitz constant $L/K$. 
    Duplicating members of the open cover thus allows reducing the magnitude and Lipschitz constants of any partition of unity functions.
    If the original open cover already has higher multiplicity, then there should exist a subordinate partition of unity, not necessarily the standard one, with a lower Lipschitz constant.
\end{remark}

\section{Applications within spaces of finite Assouad--Nagata dimension}\label{section:nagata}

Our estimates benefit from low multiplicities of the covers. 
For that reason, it is of interest how to find a cover with reduced multiplicity without decreasing the Lebesgue number too much. We connect that question with elements of metric dimension theory.

We introduce a few more notions related to covers of metric spaces. 
Suppose that $X$ is a metric space and that $\calU$ is an open cover. 
The \emph{Lebesgue number} and the \emph{multiplicity} of $\calU$ have already been defined earlier. 
Given $\rho > 0$, we say that $\calU$ has \emph{$\rho$-multiplicity} $\Mult$ 
if every set of diameter at most $\rho$ intersects at most $\Mult$ members of $\calU$. 
Moreover, given $\rho > 0$,
we say that a family $\calB$ of subsets of $X$ is \emph{$\rho$-disjoint}
if pairwise distinct members of $\calB$ have distance strictly more than $\rho$.
Note that a family ${\mathcal B}$ has $\rho $-multiplicity $1$ if ${\mathcal B}$ is $\rho $-disjoint.

\suppress{\color{red}
Indeed, if $B, B' \in \calB$ with $\dist(B,B') > \rho$, meaning that $\forall (x,x') \in B \times B'$ we have $\dist(x,x') > \rho$,
and if $A \subseteq X$ has diameter at most $\rho$, meaning that $\dist(x,x') \leq \rho$ for all $(x,x') \in A \times A$,
then any $x \in A \cap B$ and $x' \in A \cap B'$ must satisfy $\dist(x,x') \leq \rho$ and $\dist(x,x') > \rho$,
which is a contradiction.
\color{black}}

The following auxiliary result may be interpreted as a quantitative version of a result in Engelking's book~\cite[Lemma~5.1.8]{engelking1989general}. 

\begin{lemma}\label{lemma:ballmultiplicitygeneration}
    Let $\calU = (U_\alpha)_{\alpha\in\kappa}$ be a pointwise finite open cover of the metric space $X$ with Lebesgue number $\Leb$.
    Then $\calU$ has a locally finite open refinement $\calV = (V_\alpha)_{\alpha\in\kappa}$ with Lebesgue number $\Leb/3$
    that satisfies
    \begin{gather*}
        \forall \alpha \in \kappa : \forall x \in \overline{V_\alpha} : \overline{\Ball(x,\Leb/3)} \subseteq U_\alpha,
        \qquad
        \forall \beta \in \kappa : \forall x \in X \setminus U_\beta : \Ball(x, \Leb/3 ) \cap V_\beta = \emptyset.
    \end{gather*}
    In particular, if $\calU$ has multiplicity $\Mult$, then $\calV$ has $\Leb/3$-multiplicity $\Mult$.
\end{lemma}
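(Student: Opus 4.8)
The plan is to construct $\calV$ by shrinking each $U_\alpha$ slightly and by invoking a standard shrinking argument, then verify the two displayed inclusions and deduce the multiplicity bound. First I would set
\begin{gather*}
    V_\alpha := \left\{ x \in X \suchthat \func_\alpha(x) > \Leb \cdot 2/3 \right\}, \quad \alpha \in \kappa,
\end{gather*}
where $\func_\alpha(x) = \dist(x, X \setminus U_\alpha)$ as before. Each $V_\alpha$ is open because $\func_\alpha$ is continuous, and clearly $V_\alpha \subseteq U_\alpha$ since $\func_\alpha > 0$ on $V_\alpha$, so $\calV$ refines $\calU$. To see $\calV$ is a cover with Lebesgue number $\Leb/3$: given $x \in X$, pick $\beta$ with $\Ball(x,\Leb) \subseteq U_\beta$, so $\func_\beta(x) \geq \Leb$; then for any $y \in \Ball(x,\Leb/3)$ we get $\func_\beta(y) \geq \func_\beta(x) - \Leb/3 \geq 2\Leb/3$, hence $\Ball(x,\Leb/3) \subseteq V_\beta$. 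This simultaneously proves $\calV$ covers $X$ and that $\Leb/3$ is a Lebesgue number for $\calV$.

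Next I would verify the two pointwise inclusions. For the first, let $x \in \overline{V_\alpha}$; by continuity of $\func_\alpha$ we still have $\func_\alpha(x) \geq 2\Leb/3$, so for any $y \in \overline{\Ball(x,\Leb/3)}$ we have $\dist(x,y) \leq \Leb/3$ and thus $\func_\alpha(y) \geq 2\Leb/3 - \Leb/3 = \Leb/3 > 0$, giving $y \in U_\alpha$; hence $\overline{\Ball(x,\Leb/3)} \subseteq U_\alpha$. For the second, let $x \in X \setminus U_\beta$, so $\func_\beta(x) = 0$; if $y \in \Ball(x,\Leb/3)$ then $\func_\beta(y) \leq \func_\beta(x) + \dist(x,y) < \Leb/3 < 2\Leb/3$, so $y \notin V_\beta$, i.e.\ $\Ball(x,\Leb/3) \cap V_\beta = \emptyset$. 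Local finiteness of $\calV$: around any point $x$, choose $\gamma$ with $\Ball(x,\Leb/3) \subseteq V_\gamma \subseteq U_\gamma$; since $\calU$ is pointwise finite, $U_\gamma$ is an open neighborhood of $x$ meeting only finitely many $U_\alpha$, hence only finitely many $V_\alpha$ — wait, pointwise finiteness does not immediately give this, so here I would instead argue local finiteness differently, perhaps by noting that the $V_\alpha$ together with the Lebesgue-number property can be thinned further or that one may first replace $\calU$ by a locally finite refinement using the metric (every open cover of a metric space is refined by a locally finite one) and carry that refinement's index structure along; I would state this carefully.

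Finally, for the $\Leb/3$-multiplicity claim: suppose $A \subseteq X$ has $\operatorname{diam}(A) \leq \Leb/3$ and meets $V_{\alpha_1}, \dots, V_{\alpha_m}$ in points $x_1, \dots, x_m$ respectively. Fix any $x \in A$. For each $i$, since $x_i \in V_{\alpha_i}$ we have $\func_{\alpha_i}(x_i) > 2\Leb/3$, and $\dist(x, x_i) \leq \Leb/3$, so $\func_{\alpha_i}(x) \geq \func_{\alpha_i}(x_i) - \Leb/3 > \Leb/3 > 0$, whence $x \in U_{\alpha_i}$. Thus $x$ lies in $m$ distinct members of $\calU$, forcing $m \leq \Mult$. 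This gives $\Leb/3$-multiplicity $\Mult$ for $\calV$. \textbf{Main obstacle.} The only genuinely delicate point is establishing local finiteness of $\calV$ with the \emph{same index set}: pointwise finiteness of $\calU$ alone is not enough, so I expect the real work is to either start from a locally finite open refinement of $\calU$ (available in any metric space) while keeping the index set, or to build the $V_\alpha$ in a way that is automatically locally finite — everything else is elementary triangle-inequality bookkeeping with the $1$-Lipschitz functions $\func_\alpha$.
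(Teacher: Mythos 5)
Your construction is exactly the paper's ($V_\alpha := \{x \suchthat \func_\alpha(x) > 2\Leb/3\}$), and your verifications of the covering property, the Lebesgue number $\Leb/3$, the two displayed inclusions, and the $\Leb/3$-multiplicity are all correct (one small touch-up: in the Lebesgue-number step you should note $\dist(x,y) < \Leb/3$ is strict for $y$ in the open ball, so $\func_\beta(y) > 2\Leb/3$ strictly, which is what membership in $V_\beta$ requires). The one genuine gap is local finiteness, which you flag as the ``main obstacle'' and leave unresolved, proposing instead to pre-refine $\calU$ to a locally finite cover. That detour is both unnecessary and risky: a generic locally finite refinement of $\calU$ carries no control on its Lebesgue number, so you would lose precisely the quantitative content the lemma is about.

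In fact you already proved everything needed. The contrapositive of your second displayed inclusion says: if $\Ball(x,\Leb/3) \cap V_\beta \neq \emptyset$, then $x \in U_\beta$ (explicitly: pick $y$ in the intersection, so $\func_\beta(y) > 2\Leb/3$ and $\dist(x,y) < \Leb/3$, whence $\func_\beta(x) > \Leb/3 > 0$). Since $\calU$ is pointwise finite, $x$ lies in only finitely many $U_\beta$, so the open neighborhood $\Ball(x,\Leb/3)$ of $x$ meets only finitely many members of $\calV$; that is local finiteness, with the same index set. This is exactly how the paper argues, via the finite set $\mu(x) = \{\alpha \suchthat \func_\alpha(x) > 0\}$ and the $1$-Lipschitz property of the $\func_\alpha$. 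So your proof is the paper's proof in all respects except that you did not connect pointwise finiteness of $\calU$ with the estimate you had already established; with that one sentence added, the argument is complete.
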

\begin{proof}
    We define a cover $\calV := (V_\alpha)_{\alpha\in\kappa}$ 
    by setting $V_\alpha := \left\{ x \in X \suchthat \func_\alpha(x) > 2\Leb/3 \right\}$
    for each $\alpha \in \kappa$. 
    We know that $\Leb \leq \max_{\alpha\in\kappa} \func_\alpha(x)$ for all $x \in X$,
    and conclude that $\calV$ is an open cover of $X$ with $V_{\alpha} \subseteq U_{\alpha}$. 
    Since each $\func_\alpha$ has Lipschitz constant $1$, we see 
    \begin{align*}
        \forall x \in X : \exists \alpha \in \kappa : \Ball(x,\Leb/3) \subseteq V_\alpha
        ,
        \qquad 
        \forall \alpha \in \kappa : \forall x \in \overline{V_\alpha} : \overline{\Ball(x,\Leb/3)} \subseteq U_\alpha
        .
    \end{align*}
    So $\calV$ has Lebesgue number $\Leb/3$. 
    For each $x \in X$ there exists a minimal non-empty finite set $\mu(x) \subseteq \kappa$ with $\func_\beta(x) = 0$ for all $\beta \in \kappa\setminus\mu(x)$. 
    Again using that each $\func_\alpha$ has Lipschitz constant $1$, 
    it follows that 
    \begin{align*}
        \forall \beta \in \kappa\setminus\mu(x) : 
        \forall y \in \Ball(x,\Leb/3) : 
        \func_\beta(y) < \Leb/3
        .
    \end{align*}
    In particular, $\Ball(x,\Leb/3) \cap V_\beta = \emptyset$ for all $\beta \in \kappa\setminus\mu(x)$. 
    If $A \subseteq X$ with $x \in A$ has diameter at most $\Leb/3$,
    then $A \subseteq \overline{\Ball(x,\Leb/3)}$ can only intersect those $V_\alpha \in \calV$ 
    for which $\alpha \in \mu(x)$. 
    The proof is complete. 
\end{proof}

Our main topic in this section is how to find subcovers with reduced multiplicity while not deteriorating the Lebesgue number too much.
Specifically, we would like to see the multiplicity reduced while the Lebesgue number deteriorates only by a constant factor that depends only on the metric space itself. 
As it turns out, this naturally connects with the notion of Assouad--Nagata dimension~\cite{nagata1960metric,assouad1982sur,lang2005nagata,le2015assouad}. 

We say that a metric space $X$ has \emph{Assouad--Nagata dimension} at most $n$ if 
there is a $c > 0$ such that for all $\ell > 0$ there exists an open cover $\calU$ of $X$ whose members have diameter at most $c \ell$ and with Lebesgue number $\ell$ and multiplicity $n + 1$.
The following proposition lists numerous equivalent conditions that characterize finite Assouad--Nagata dimension, 
some of which seem not widely circulated; see also the subsequent remark.

\begin{proposition}\label{proposition:nagata}
    Let $X$ be a metric space.
    The following are equivalent.
    \begin{enumerate}[label=(\Alph*),itemsep=0pt]
        \item \label{proposition:nagata:a}
        There exists $c_a \in (0,1)$ such that every open cover $\calU = (U_{\alpha})_{\alpha\in\kappa}$ of $X$ with Lebesgue number $L$ has an open refinement $\calW = (W_{\alpha})_{\alpha\in\kappa}$ with Lebesgue number $c_a L$ and multiplicity $\Mult$.
        \item \label{proposition:nagata:b}
        There exists ${c}_b \in (0,1)$ such that every open cover $\calU = (U_{\alpha})_{\alpha\in\kappa}$ of $X$ with Lebesgue number $L$ has an open refinement $\calW = (W_{\alpha})_{\alpha\in\kappa}$ with ${c}_b L$-multiplicity $\Mult$.
        \item \label{proposition:nagata:c}
        There exists $c_c, c_c' \in (0,1)$ such that every open cover $\calU = (U_{\alpha})_{\alpha\in\kappa}$ of $X$ with Lebesgue number $L$ has an open refinement $\calW = (W_{\alpha})_{\alpha\in\kappa}$ with Lebesgue number $c_c L$ and $c_c' L$-multiplicity $\Mult$.

        \item \label{proposition:nagata:d}
        There exists ${c}_d \in (0,1)$ such that for every $\ell > 0$, there exists an $\ell$-bounded open cover of $X$ with Lebesgue number ${c}_d \ell$ and multiplicity $\Mult$.
        \item \label{proposition:nagata:e} There exists ${c}_e \in (0,1)$ such that for every $\ell > 0$, there exists an $\ell$-bounded open cover of $X$ with ${c}_e \ell$-multiplicity $\Mult$.
        \item \label{proposition:nagata:f}
        There exists ${c}_f, {c}_f' \in (0,1)$ such that for every $\ell > 0$, there exists an $\ell$-bounded open cover of $X$ with Lebesgue number ${c}_f \ell$ and ${c}_f' \ell$-multiplicity $\Mult$.

        \item \label{proposition:nagata:g}
        There exists $c_g \in (0,1)$ such that for every $\ell > 0$, 
        there exist pairwise disjoint $\ell$-bounded $c_g \ell$-disjoint open families $\calB_1, \dots, \calB_\Mult$ 
        such that $\calB = \calB_1 \cup \dots \cup \calB_\Mult$ is a cover of $X$. 
        \item \label{proposition:nagata:h}
        There exists $c_h, c_h' \in (0,1)$ such that for every $\ell > 0$, 
        there exist pairwise disjoint $\ell$-bounded $c_h' \ell$-disjoint open families $\calB_1, \dots, \calB_\Mult$ 
        such that $\calB = \calB_1 \cup \dots \cup \calB_\Mult$ is a cover of $X$ with Lebesgue number $c_h \ell$ and $c_h' \ell$-multiplicity $\Mult$. 

        \item \label{proposition:nagata:i}
        There exist constants $c_i, c_i' \in (0,1)$
        such that for every open cover $\calU$ of $X$ with Lebesgue number $\Leb$,  
        there exist pairwise disjoint $\Leb$-bounded $c_i' \Leb$-disjoint open families $\calB_{1}, \dots, \calB_{\Mult}$ 
        such that $\calB = \calB_{1} \cup \dots \cup \calB_{\Mult}$ is a cover of $X$ refining $\calU$ and with Lebesgue number $c_i \Leb$.

    \end{enumerate}
    If any of the conditions is true, then we may assume 
    \begin{gather*}
        c_{a} \geq c_{c} = c_{c}' \geq { c_{a} }/{3}, 
        \quad 
        {c}_{b} \geq c_{c} = c_{c}' \geq {c}_{b/{6}}, 
        \quad 
        {c}_{d} \geq {c}_{f} = {c}_{f}' \geq {c}_{d}/{3}, 
        \quad 
        {c}_{e} \geq {c}_{f} = {c}_{f}' \geq {{c}_{e}}/{3},
        \\
        c_{g} \geq c_{h} = c_{h}' \geq {c_{g}}/{3}, 
        \quad 
        {c}_{f} \geq c_{h}, 
        \quad 
        {c}_{f}' \geq c_{h}', 
        \quad 
        c_{g} \geq C_{0}^{-1} ( M+1 )^{-1} (M+2)^{-2} {c}_{f},
        \\
        c_{a} \geq {{c}_{d}}/{2}, 
        \quad 
        {c}_{d} \geq {c_{a}}/{2}, 
        \quad 
        c_{i} \geq {c_{h}}/{2}, 
        \quad 
        c_{h} \geq {c_{i}}/{2}, 
        \quad 
        c_{i}' \geq {c_{h}'}/{ 2}, 
        \quad 
        {c_{h}'} \geq { c_{i}'}/{ 2}.
    \end{gather*}
    Here, $C_{0} \leq \max(1,\Mult-1)$, and if $X$ has the approximate midpoint property, $C_{0} \leq (\Mult-1)/2$.
\end{proposition}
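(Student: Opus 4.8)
The plan is to prove that all nine conditions are equivalent by running a single cycle of implications and keeping a ledger of the constants; only the step (F)$\Rightarrow$(G) is substantive, and it is where Theorem~\ref{theorem:mainresult} enters. Several links are immediate: since a one-point set has diameter $0$, a cover of $\rho$-multiplicity $\Mult$ has multiplicity at most $\Mult$, giving (C)$\Rightarrow$(A) with $c_a=c_c$ and (F)$\Rightarrow$(D) with $c_d=c_f$; discarding a clause gives (C)$\Rightarrow$(B), (F)$\Rightarrow$(E) and (H)$\Rightarrow$(G) with $c_b:=c_c'$, $c_e:=c_f'$, $c_g:=c_h'$; and, since each member of a $c_h'\ell$-disjoint family is $\ell$-bounded, the cover $\calB_1\cup\dots\cup\calB_\Mult$ furnished by (H) is itself an $\ell$-bounded open cover of Lebesgue number $c_h\ell$ and $c_h'\ell$-multiplicity $\Mult$, i.e.\ (H)$\Rightarrow$(F) with $c_f:=c_h$, $c_f':=c_h'$. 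Whenever an implication outputs a refinement on a new index set, I reindex it over the index set of the refined cover by grouping members according to a member of that cover containing them; the resulting unions preserve openness, Lebesgue numbers, and upper bounds on both multiplicity and $\rho$-multiplicity.

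The remaining easy implications use two elementary devices. First, for an open family $\calW$ and $s>0$, the $s$-enlargement $\calW^{+s}=(\{x:\dist(x,W)<s\})_{W\in\calW}$ has Lebesgue number $\ge s$, turns a $\rho$-disjoint family into a $(\rho-2s)$-disjoint one and a family of $\rho$-multiplicity $\Mult$ into one of $(\rho-2s)$-multiplicity $\Mult$, and increases diameters by at most $2s$; it refines $\calU$ once each member of $\calW$ lies in the $s$-interior $\{x:\func_\alpha(x)>s\}$ of the corresponding $U_\alpha$, and these $s$-interiors form a cover of Lebesgue number $\Leb-s$ whenever $\calU$ has Lebesgue number $\Leb\ge s$. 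Feeding the $(L/2)$-interior cover of $\calU$ into (B) and enlarging back, with $s$ balancing the residual disjointness against the gained Lebesgue number, yields (B)$\Rightarrow$(C) with $c_c=c_c'\ge c_b/6$; the same operation at a suitably rescaled base scale yields (E)$\Rightarrow$(F) and (G)$\Rightarrow$(H), each at the cost of a factor at most $3$. Second, for any $\ell>0$ the family $\{\Ball(x,\ell/2):x\in X\}$ is an open cover of Lebesgue number $\ell/2$ with members of diameter $\le\ell$: applying (A) to it gives an $\ell$-bounded multiplicity-$\Mult$ refinement of Lebesgue number $\tfrac12 c_aL$, i.e.\ (D) with $c_d=c_a/2$, while the $(L/2)$-scale cover from (D) has diameter $<L$ and hence refines any Lebesgue-$L$ cover, i.e.\ (D)$\Rightarrow$(A) with $c_a=c_d/2$; the same device interchanges (H) and (I) with a factor $2$. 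Finally, (A)$\Rightarrow$(C) and (D)$\Rightarrow$(F) each follow, with a factor $3$, by applying Lemma~\ref{lemma:ballmultiplicitygeneration} to the multiplicity-$\Mult$ cover from (A), resp.\ (D). This closes (A)$\Leftrightarrow$(B)$\Leftrightarrow$(C)$\Leftrightarrow$(D)$\Leftrightarrow$(E)$\Leftrightarrow$(F) and (G)$\Leftrightarrow$(H)$\Leftrightarrow$(I), so only (F)$\Rightarrow$(G) remains.

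For (F)$\Rightarrow$(G), let $\calV$ be the $\ell$-bounded cover of (F), of Lebesgue number $c_f\ell$ and multiplicity $\le\Mult$. By Lemma~\ref{lemma:ballmultiplicitygeneration} replace it by a locally finite refinement, still $\ell$-bounded and now of Lebesgue number $c_f\ell/3$, so that the standard partition of unity $\Lambda=(\lambda_\alpha)$ subordinate to it is continuous; by Theorem~\ref{theorem:mainresult} (case $p=1$) every $\lambda_\alpha$ --- indeed every partial sum --- is $\Lip$-Lipschitz with $\Lip\le 3C_0/(c_f\ell)$, where $C_0\le\max(1,\Mult-1)$ in general and $C_0\le(\Mult-1)/2$ under the approximate midpoint property. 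Fix a small $t>0$; for $i\in\{1,\dots,\Mult\}$ and each $i$-element $S\subseteq\kappa$ put
\[
A_S^{t}\ :=\ \{\, x\in X :\ \min_{\alpha\in S}\lambda_\alpha(x) > \max_{\beta\in\kappa\setminus S}\lambda_\beta(x)+t \,\},
\]
which is open by local finiteness and contained in $V_\alpha$ for every $\alpha\in S$, hence of diameter $\le\ell$. If $x\in A_S^{t}$ and $y\in A_{S'}^{t}$ with $S\ne S'$ and $\card{S}=\card{S'}=i$, then choosing $\alpha_0\in S\setminus S'$ and $\gamma_0\in S'\setminus S$ the function $\lambda_{\alpha_0}-\lambda_{\gamma_0}$ exceeds $t$ at $x$ and is below $-t$ at $y$, and being $2\Lip$-Lipschitz this forces $\dist(x,y)>t/\Lip$; hence $\calB_i:=\{A_S^{t}:\card{S}=i\}$ is an $\ell$-bounded $(t/\Lip)$-disjoint open family (discard any set common to two families, which leaves the union unchanged, so the $\calB_i$ are pairwise disjoint as families). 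To see $\calB_1\cup\dots\cup\calB_\Mult$ covers $X$, fix $x$ and list the positive values $\lambda_{\alpha_1}(x)\ge\dots\ge\lambda_{\alpha_k}(x)>0$ with $k\le\Mult$: the $k$ consecutive differences $\lambda_{\alpha_1}-\lambda_{\alpha_2},\dots,\lambda_{\alpha_{k-1}}-\lambda_{\alpha_k},\lambda_{\alpha_k}$ are non-negative with sum $\lambda_{\alpha_1}(x)\ge1/k$, so one of them exceeds $1/k^2\ge1/\Mult^2$, and for $t<1/\Mult^2$ this places $x$ in the corresponding $A_S^{t}$. Letting $t\uparrow1/\Mult^2$ we obtain (G) with $c_g\ell=t/\Lip$, so every $c_g<c_f/(3C_0\Mult^2)$ is admissible; since $3\Mult^2\le(\Mult+1)(\Mult+2)^2$ this gives $c_g\ge C_0^{-1}(\Mult+1)^{-1}(\Mult+2)^{-2}c_f$, and the bounds on $C_0$ are those of Theorem~\ref{theorem:mainresult}. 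The remaining displayed inequalities between $c_a,\dots,c_i'$ then follow by composing the constants gathered along the cycle.

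The main obstacle is exactly (F)$\Rightarrow$(G): converting a cover of small multiplicity into $\Mult$ pairwise disjoint families with a quantitatively controlled disjointness radius. It is here that the optimal Lipschitz bound of Theorem~\ref{theorem:mainresult} is essential and where the factors $C_0$ and $(\Mult+1)(\Mult+2)^2$ originate; everything else amounts to enlarging, shrinking and rescaling covers together with one appeal to Lemma~\ref{lemma:ballmultiplicitygeneration}, and is routine if lengthy bookkeeping.
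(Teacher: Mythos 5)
Your proposal is correct, and its overall architecture coincides with the paper's: the same collection of trivial implications, the same enlargement/interior devices for (B)$\Rightarrow$(C), (E)$\Rightarrow$(F), (G)$\Rightarrow$(H), the same ball-cover trick for (A)$\Leftrightarrow$(D) and (H)$\Leftrightarrow$(I), Lemma~\ref{lemma:ballmultiplicitygeneration} for (A)$\Rightarrow$(C) and (D)$\Rightarrow$(F), and Theorem~\ref{theorem:mainresult} as the engine of the one substantive step (F)$\Rightarrow$(G). Where you genuinely diverge is in how that step is organized. The paper works with two thresholds: it defines $B_{\mu,\epsilon,\zeta}$ by $\lambda_\alpha(x) > (\Mult-k+1)\epsilon$ for $\alpha\in\mu$ and $\lambda_\beta(x) < (\Mult-k+1)\zeta$ for $\beta\notin\mu$, and proves covering by a recursive descent under the compatibility condition $\Mult(\Mult+1)^{-1}\epsilon < \zeta < \epsilon$, which is what produces the factor $(\Mult+1)(\Mult+2)^{2}$. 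You instead use single-parameter ``gap sets'' $A_S^t$ (minimum over $S$ exceeding the maximum over the complement by $t$), get disjointness from the $2\Lip$-Lipschitz difference $\lambda_{\alpha_0}-\lambda_{\gamma_0}$, and get covering by a pigeonhole on the $k$ consecutive differences of the ordered values, whose telescoping sum is $\lambda_{\alpha_1}(x)\geq 1/k$; this is simpler and yields the better threshold $t<1/\Mult^2$, hence admissible $c_g$ up to $c_f/(3C_0\Mult^2)$, which indeed dominates the paper's $C_0^{-1}(\Mult+1)^{-1}(\Mult+2)^{-2}c_f$ since $3\Mult^2\le(\Mult+1)(\Mult+2)^2$. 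Two small remarks: your appeal to Lemma~\ref{lemma:ballmultiplicitygeneration} inside (F)$\Rightarrow$(G) is not needed — the $c_f'\ell$-multiplicity of the cover from (F) already makes it (uniformly) locally finite, so the sets $A_S^t$ are open and the partition of unity has Lipschitz constant $C_0/(c_f\ell)$ rather than $3C_0/(c_f\ell)$, improving your constant by the factor $3$ you gave away; and the strict ``distance $>c_g\ell$'' required by $\rho$-disjointness is fine because you only claim the open range $c_g<c_f/(3C_0\Mult^2)$ (the paper achieves the same slack with its explicit factor-$2$ buffer). Neither point is a gap; the proof goes through as written.
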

\begin{proof}
    We prove a sequence of implications.
    \begin{itemize}
        \item\ref{proposition:nagata:c}$\implies$\ref{proposition:nagata:a}:
        Clear, with $c_a \geq c_c$.
        
        \item\ref{proposition:nagata:a}$\implies$\ref{proposition:nagata:c}:
        This follows by Lemma~\ref{lemma:ballmultiplicitygeneration} with $c_c = c_c' \geq c_a/3$.

        \item\ref{proposition:nagata:c}$\implies$\ref{proposition:nagata:b}:
        Clear, with ${c}_b \geq c_c'$.

        \item\ref{proposition:nagata:b}$\implies$\ref{proposition:nagata:c}:
        Let $\calU = (U_{\alpha})_{\alpha\in\kappa}$ be an open cover of $X$ with Lebesgue number $L$. 
Whenever, $\alpha \in \kappa$, 
        we write $X_{\alpha} \subseteq X$ for the set of those $x \in X$ for which $\Ball(x,L) \subseteq U_{\alpha}$,
        and we define $V_\alpha := \left\{ z \in X \suchthat* z \in \Ball(X_{\alpha},L/2) \right\}$. 
        Then $\calV := ( V_\alpha )_{\alpha \in \kappa}$ is an open refinement of $\calU$ with Lebesgue number $L/2$.
        By assumption, $\calV$ has an open refinement $\calW = (W_{\alpha})_{\alpha\in\kappa}$ with ${c}_b L / 2$-multiplicity $\Mult$.
        Then 
        \begin{displaymath}
            \calW' := ( \Ball(W_{\alpha}, {c}_b L / 6 ) )_{\alpha\in\kappa}
        \end{displaymath}
        is an open cover with Lebesgue number ${c}_b L / 6$.
        If $\calW'$ does not have $({c}_b L / 6)$-multiplicity $\Mult$, 
        then there exists $D \subseteq X$ with a diameter of at most $c_b L /6$ 
        intersecting pairwise distinct $B_1,\dots,B_M,B_{M+1} \in \calW'$. 
        But then $\Ball( D, {c}_b L / 6 )$ has diameter at most $c_b L / 2$ and intersects $\Mult+1$ distinct members of $\calW$,
        which is a contradiction. 
        Hence $\calW'$ has $({c}_b L/6)$-multiplicity $\Mult$. 
        By construction, $\calW'$ refines $\calU$ 
        because $\Ball(W_{\alpha}, {c}_b L / 6 ) \subseteq \Ball(V_{\alpha}, {c}_b L / 6 ) \subseteq \Ball(X_{\alpha},2L/3 ) \subseteq U_{\alpha}$ for each $\alpha \in \kappa$.

        \item\ref{proposition:nagata:f}$\implies$\ref{proposition:nagata:d}:
        Clear, with ${c}_d \geq {c}_f$.

        \item\ref{proposition:nagata:d}$\implies$\ref{proposition:nagata:f}:
        This follows by Lemma~\ref{lemma:ballmultiplicitygeneration} with ${c}_f = {c}_f' \geq {c}_d/3$.
        
        \item\ref{proposition:nagata:f}$\implies$\ref{proposition:nagata:e}:
        Clear, with ${c}_e \geq {c}_f'$.

        \item\ref{proposition:nagata:e}$\implies$\ref{proposition:nagata:f}:
        Let $\calU$ be an $\ell$-bounded open cover of $X$ with ${c}_e \ell$-multiplicity $\Mult$. 
We define 
        \begin{displaymath}
            \calU' := \left\{ \Ball(U, {c}_e \ell / 3 ) \suchthat U \in \calU \right\}
            .
        \end{displaymath}
        Obviously, $\calU'$ is an open cover of $X$ with Lebesgue number ${c}_e \ell / 3$. 
        If $\calU'$ does not have $({c}_e \ell / 3)$-multiplicity $\Mult$, 
        then there exists $D \subseteq X$ with a diameter of at most $c_e \ell /3$ 
        intersecting pairwise distinct $B_1,\dots,B_M,B_{M+1} \in \calU'$. 
        But then $\Ball( D, {c}_e \ell / 3 )$ has diameter at most $c_e \ell$ and intersects $\Mult+1$ distinct members of $\calU'$,
        which is a contradiction. 
        Hence $\calU'$ has $({c}_e \ell/3)$-multiplicity $\Mult$.

        \item\ref{proposition:nagata:h}$\implies$\ref{proposition:nagata:g}:
        Clear, with $c_g \geq c_h'$.
        
        \item\ref{proposition:nagata:g}$\implies$\ref{proposition:nagata:h}:
        Assume that we have an open cover of $\calB = \calB_1 \cup \dots \cup \calB_\Mult$ that is a disjoint union 
        of $c_g \ell$-disjoint open families. We define 
        \begin{align*}
            \calB_k' := \left\{ \Ball(B,c_g \ell/3) \suchthat B \in \calB_k \right\}.
        \end{align*}
        Each $\calB_{k}'$ is $(c_g \ell/3)$-disjoint, 
        and $\calB' = \calB'_1 \cup \dots \cup \calB'_\Mult$ is an open cover with Lebesgue number $c_g \ell/3$. 
        Moreover, suppose $D \subseteq X$ has a diameter of at most $c_g \ell/3$.
        If $B, B' \in \calB'$ with $D \cap B \neq \emptyset$ and $D \cap B' \neq \emptyset$, then $B$ and $B'$ have distance at most $c_g \ell / 3$,
        and so $B \in \calB_k$ and $B' \in \calB_l$ with $k \neq l$.
		Hence $\calB'$ has $(c_g \ell/3)$-multiplicity $\Mult$.

        \item\ref{proposition:nagata:h}$\implies$\ref{proposition:nagata:f}: This is clear, with ${c}_f = c_h$ and ${c}_f' = c_h'$.
        
        \item\ref{proposition:nagata:f}$\implies$\ref{proposition:nagata:g}: 
Assume that~\ref{proposition:nagata:f} holds and let $\ell > 0$.
        There exists an $\ell$-bounded open cover $\calU = (U_{\alpha})_{\alpha\in\kappa}$ of $X$ 
        with Lebesgue number ${c}_f \ell$ and ${c}_f' \ell$-multiplicity $\Mult$.
        Without loss of generality, $\calU$ has at least $\Mult$ distinct members. 
        We let $\Lambda = (\lambda_{\alpha})_{\alpha\in\kappa}$ be a partition of unity subordinate to $\calU$. 
        By Theorem~\ref{theorem:mainresult}, we may assume that its members have Lipschitz constant $L = ( {c}_f \ell )^{-1} \max(1,\Mult-1)$.
        For every subset $\mu \subseteq \kappa$ with $\card{\mu} = k$ and $\epsilon, \zeta \in (0,1)$, we define \begin{align*}
            B_{\mu,\epsilon,\zeta}
            := 
            \left\{ 
                x \in X 
                \suchthat*
                \begin{array}{l}
                    \forall \alpha \in \mu                : \lambda_{\alpha}(x) > (\Mult-k+1) \epsilon, 
                    \\
                    \forall \beta  \in \kappa\setminus\mu : \lambda_{\beta }(x) < (\Mult-k+1) \zeta 
                \end{array}                
            \right\}.
        \end{align*}
        Consider any $\mu, \mu' \subseteq \kappa$ with $\card{\mu} = \card{\mu'} = k$ and $\mu \neq \mu'$.
        For all $\alpha \in \mu \setminus \mu'$, 
        \begin{gather*}
            \forall y  \in B_{\mu ,\epsilon,\zeta} : \lambda_{\alpha}(y ) > (\Mult-k+1) \epsilon,
            \qquad
            \forall y' \in B_{\mu',\epsilon,\zeta} : \lambda_{\alpha}(y') < (\Mult-k+1) \zeta.
        \end{gather*}
        Let us assume that $\epsilon > \zeta$. 
        For all $y \in B_{\mu ,\epsilon,\zeta}$ and $y' \in B_{\mu',\epsilon,\zeta}$,
        the Lipschitz property of $\lambda_{\alpha}$ now implies 
        \[
            \dist(y,y') 
            > 
            L^{-1} (\Mult-k+1) ( \epsilon - \zeta )
            = 
            \underbrace{\max(1,\Mult-1)^{-1} {c}_f (\Mult-k+1) ( \epsilon - \zeta )}_{:= 2 \cdot c_{g,k}}
            \cdot 
            \ell
        \]
        for $k = 1,\dots,\Mult-1$ and
        \[
            \dist(y,y') 
            > 
            L^{-1} \epsilon 
            = 
            \underbrace{\max(1,\Mult-1)^{-1} \epsilon {c}_f}_{:= 2 \cdot c_{g,\Mult}}
            \cdot 
            \ell
            .
        \]
        Consequently, still assuming $\epsilon > \zeta$, the families 
        \begin{align*}
            \calB_{k} := \left\{ B_{\mu,\epsilon,\zeta} \suchthat \mu \subseteq \kappa, \card{\mu}=k \right\}
        \end{align*}
        are $c_{g,k} \ell$-disjoint, with $c_{g,k}$ as above. 
        By construction, each $\calB_{k}$ is $\ell$-bounded, its members being subsets of members of $\calU$.
It remains to show that their union constitutes an open cover of $X$ for some choice of $\epsilon$ and $\zeta$.
        Let us assume that $\zeta < \epsilon$ and that 
        \begin{align*}
            (\Mult-k+1) \epsilon < (\Mult-k+2) \zeta, \quad k=1,\dots,\Mult.
        \end{align*}
        This is the case if and only if $\Mult(\Mult+1)^{-1} \epsilon < \zeta < \epsilon$.
Let $x \in X$. There exist pairwise distinct $\alpha_1,\dots,\alpha_\Mult \in \kappa$ such that $\lambda_{\alpha_1}(x) \geq \lambda_{\alpha_2}(x) \geq \dots \geq \lambda_{\alpha_\Mult}(x) \geq 0$.
We define the open sets 
        \begin{align*}
            B_{k,\epsilon,\zeta}
            := 
            \left\{ 
                x \in X 
                \suchthat* 
                \begin{array}{c}
                    \forall i \in \Indices{  1}{k} : \lambda_{\alpha_{i}}(x) > (\Mult-k+1) \epsilon, 
                    \\
                    \forall i \in \Indices{k+1}{\Mult} : \lambda_{\alpha_{i}}(x) < (\Mult-k+1) \zeta 
                \end{array}                
            \right\}.
        \end{align*}
        It suffices to show that $x \in B_{k,\epsilon,\zeta}$ for some $1 \leq k \leq \Mult$.
        We use a recursive argument for that.
        To begin with, if $\lambda_{\alpha_{\Mult}}(x) > \epsilon$, then $x \in B_{\Mult,\epsilon,\zeta}$.
        Otherwise, $\lambda_{\alpha_{\Mult}}(x) \leq \epsilon < \Mult^{-1}(\Mult+1) \zeta \leq 2 \zeta$. 
Going further, suppose that $k \in \Indices{2}{\Mult}$, that $x \notin B_{i,\epsilon,\zeta}$ for $i \in \Indices{k}{\Mult}$,
        and that 
        \begin{align*}
            \lambda_{\alpha_{\Mult  }}(x) < 2 \zeta,
            \quad 
            \lambda_{\alpha_{\Mult-1}}(x) < 3 \zeta,
            \quad 
            \dots,
            \quad 
            \lambda_{\alpha_{k      }}(x) < (\Mult-k+2) \zeta.
        \end{align*}
        If $\lambda_{\alpha_{k-1}}(x) > (\Mult-(k-1)+1)\epsilon$, then $x \in B_{k-1,\epsilon,\zeta}$.
        Otherwise, $\lambda_{\alpha_{k-1}}(x) \leq (\Mult-(k-1)+1) \epsilon < (\Mult-(k-1)+2) \zeta$ by assumption.
        We repeat this argument: if $x \notin B_{i,\epsilon,\zeta}$ for $i \in \Indices{1}{\Mult}$,
        then 
        \begin{align*}
            \lambda_{\alpha_{\Mult  }}(x) < 2 \zeta,
            \quad 
            \lambda_{\alpha_{\Mult-1}}(x) < 3 \zeta,
            \quad 
            \dots,
            \quad 
            \lambda_{\alpha_{1      }}(x) < (\Mult+1) \zeta.
        \end{align*}
        In that case,
        \begin{align*}
            \lambda_{\alpha_{1  }}(x) 
            + \dots + 
            \lambda_{\alpha_{\Mult  }}(x) 
            <
            \frac{(\Mult+1)(\Mult+2)-2}{2} \cdot \zeta.
        \end{align*}
        But the right-hand side is strictly less than $1$ provided that $\zeta$ is small enough.
        The latter is the case, for example, if $(\Mult+1)(\Mult+2) \epsilon < 2$. 
This contradicts the partition of unity property. 
        We conclude that $x \in B_{k,\epsilon,\zeta}$ for some $k \in \Indices{1}{\Mult}$.
        Lastly, possible specific choices of $\epsilon$ and $\zeta$ are 
        \begin{displaymath}
            \epsilon = 2(\Mult+2)^{-2},
            \quad
            \zeta
            =
            ( \Mult+0.5 )( \Mult+1 )^{-1}
            \epsilon
            .
        \end{displaymath}
        With that, $\epsilon - \zeta = 0.5 ( \Mult + 1 )^{-1} \epsilon = ( \Mult + 1 )^{-1} (\Mult+2)^{-2}$. This leads to  
        \suppress{
        \begin{align*}\color{blue}
            c_{g,\Mult} 
            \geq 
            2\max(1,\Mult-1)^{-1} (\Mult+2)^{-2} {c}_f
            , 
            \quad
            c_{g,k} 
&\geq\color{blue}
            \max(1,\Mult-1)^{-1} {c}_f (\Mult-k+1) ( \Mult + 1 )^{-1} (\Mult+2)^{-2}
            \\&\geq\color{blue}
            \max(1,\Mult-1)^{-1} {c}_f 2 ( \Mult + 1 )^{-1} (\Mult+2)^{-2}
            \\&\geq\color{blue}
            2\max(1,\Mult-1)^{-1} ( \Mult + 1 )^{-1} (\Mult+2)^{-2} {c}_f
        \end{align*}
        }\begin{align*}
            \suppress
            {\color{blue}c_{g,k} \geq c_{g} := \frac{{c}_f}{\Mult(\Mult+2)^2\max(1,\Mult-1)}, \quad k = 1,\dots,\Mult\\}
            {c_{g,k} \geq c_{g} := \max(1,\Mult-1)^{-1} ( \Mult + 1 )^{-1} (\Mult+2)^{-2} {c}_f, \quad k = 1,\dots,\Mult}
.
        \end{align*}
        This shows the desired implication.

        \item\ref{proposition:nagata:a}$\implies$\ref{proposition:nagata:d}:
        Let $\calV$ be an open cover by all balls of radius $\ell/2$.
        Then $\calV$ has Lebesgue number $\ell/2$. 
        Assuming that~\ref{proposition:nagata:a} is true,
        we extract an open refinement with Lebesgue number $c_a \ell/2$ and multiplicity $\Mult$. 
        Its members have diameter at most $\ell$, and so~\ref{proposition:nagata:d} holds with ${c}_d \geq c_a/2$. 

        \item\ref{proposition:nagata:d}$\implies$\ref{proposition:nagata:a}:
        Let $\calU = (U_{\alpha})_{\alpha\in\kappa}$ is any open cover of $X$ with Lebesgue number $2\Leb > 0$.
        Assuming that~\ref{proposition:nagata:d} holds,
        there exists an $\Leb$-bounded open cover $\calV$ of $X$ which has multiplicity $\Mult$ and Lebesgue number ${c}_d \Leb$.
        For each $V \in \calV$, we fix $\alpha(V) \in \kappa$ such that $V \subseteq U_{\alpha}$.
        We set
        \begin{align*}
            W_{\alpha} := \bigcup\left\{ V \in \calV \suchthat \alpha(V) = \alpha \right\}, \quad \alpha \in \kappa.
        \end{align*}
        Then $\calW := (W_{\alpha})_{\alpha\in\kappa}$ is an open refinement of $\calU$.
        One easily sees that $\calW$ has multiplicity $\Mult$.
        The ball around any $x \in X$ of radius ${c}_d \Leb$ is contained in some member of $\calV$,
        which must be contained in some member of $\calW$.
        Hence~\ref{proposition:nagata:a} holds with $c_a \geq {c}_d$.

        \item\ref{proposition:nagata:h}$\implies$\ref{proposition:nagata:i}:
        Let $\calU$ be an open cover of $X$ with Lebesgue number $2\Leb$.
        If~\ref{proposition:nagata:h} is true,
        then we find pairwise disjoint $\Leb$-bounded $c_h' \Leb$-disjoint open families $\calB_1, \dots, \calB_\Mult$ 
        such that $\calB = \calB_1 \cup \dots \cup \calB_\Mult$ is a cover of $X$ with Lebesgue number $c_h \Leb$ and $c_h' \Leb$-multiplicity $\Mult$. 
        Then every member of $\calB$ is contained in a member of $\calU$, so $\calB$ refines $\calU$. 

        \item\ref{proposition:nagata:i}$\implies$\ref{proposition:nagata:h}:
        Let $\ell > 0$ and $\calU$ be the family of all balls in $X$ of radius $\ell$. 
        Then $\calU$ has Lebesgue number $\ell/2$. If~\ref{proposition:nagata:i} holds, 
        then we find pairwise disjoint $\ell/2$-bounded $c_i' \ell/2$-disjoint open families $\calB_1, \dots, \calB_\Mult$ 
        such that $\calB = \calB_1 \cup \dots \cup \calB_\Mult$ is a cover of $X$ 
        with Lebesgue number $c_i \ell/2$.
        It follows that $\calB$ has $c_i'/2$-multiplicity $\Mult$. 
        
    \end{itemize}
    These implications combined complete the proof. 
\end{proof}

\begin{corollary}
    A metric space $X$ has Assouad--Nagata dimension at most $n$
    if and only if 
    there exists $c \in (0,1)$ such that every open cover $\calU = (U_{\alpha})_{\alpha\in\kappa}$ of $X$ with Lebesgue number $L$ 
    has an open refinement $\calW = (W_{\alpha})_{\alpha\in\kappa}$ with Lebesgue number $c L$ and multiplicity $n+1$. \qed
\end{corollary}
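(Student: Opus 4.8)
The plan is to read this corollary off from Proposition~\ref{proposition:nagata} with the choice $\Mult = n+1$. The property asserted in the corollary is literally condition~\ref{proposition:nagata:a} of that proposition, identifying the constant $c$ with $c_a$, so the only thing that remains is to recognise the definition of Assouad--Nagata dimension at most $n$ among the equivalent conditions listed there.

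The key step is to compare the definition with condition~\ref{proposition:nagata:d}. By definition, $X$ has Assouad--Nagata dimension at most $n$ precisely when there is a constant $c > 0$ so that for every scale $\ell > 0$ there is an open cover of $X$ of multiplicity $n+1$ whose members have diameter at most $c\ell$ and whose Lebesgue number is $\ell$; condition~\ref{proposition:nagata:d} instead requires, for some $c_d \in (0,1)$, an $\ell$-bounded open cover of multiplicity $n+1$ with Lebesgue number $c_d\ell$ at every scale $\ell$. These two statements differ only by a rescaling of the scale parameter: applying the definition at scale $\ell/c$ produces an $\ell$-bounded cover with Lebesgue number $\ell/c$, hence an $\ell$-bounded cover with Lebesgue number $c_d\ell$ for $c_d := \min(1/c, 1/2) \in (0,1)$, so the definition implies~\ref{proposition:nagata:d}; conversely, applying condition~\ref{proposition:nagata:d} at scale $\ell/c_d$ yields a cover with Lebesgue number $\ell$ whose members have diameter at most $\ell/c_d$, so~\ref{proposition:nagata:d} implies the definition with $c = 1/c_d$. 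The only point to watch is that recording a smaller Lebesgue number, or a larger diameter bound, preserves the relevant properties, which is why the minimum above is harmless.

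Finally I would chain this equivalence with the equivalence \ref{proposition:nagata:a}$\Longleftrightarrow$\ref{proposition:nagata:d} already established within Proposition~\ref{proposition:nagata}, where the recorded relations $c_a \geq c_d/2$ and $c_d \geq c_a/2$ in particular keep the constant inside $(0,1)$; this yields the corollary. There is no real obstacle here: the entire content is the bookkeeping of the constant under rescaling of $\ell$, the substantive work having already been carried out in Proposition~\ref{proposition:nagata}.
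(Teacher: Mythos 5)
Your proposal is correct and matches the paper's (implicit) argument: the corollary is stated with no separate proof precisely because its condition is item~\ref{proposition:nagata:a} of Proposition~\ref{proposition:nagata} with $\Mult = n+1$, while the definition of Assouad--Nagata dimension at most $n$ is item~\ref{proposition:nagata:d} up to the rescaling of $\ell$ you carry out. Your bookkeeping of the constants, including capping $c_d$ below $1$ and using that a smaller Lebesgue number is still a Lebesgue number, is exactly the intended reduction.
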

\suppress{
\begin{proof}\color{red}
    If item~\ref{proposition:nagata:e} holds, then for every $\ell > 0$ we have an $\ell$-bounded open cover 
    such that every ball of radius $c_e \ell / 2$ intersects with at most $n+1$ members of the cover. 
    In turn, if there exists $c > 0$ such that for every $\ell > 0$ there exists an $\ell$-bounded open cover 
    such that every ball of radius $c \ell$ intersects with at most $n+1$ members of the cover, 
    then every set of diameter at most $c \ell$ intersects with at most $n+1$ members of the cover. 
\end{proof}
}

\begin{remark}
    The numerous conditions in Proposition~\ref{proposition:nagata},
    which are all equivalent to having Assouad--Nagata dimension at most $n = \Mult - 1$, 
    are inspired by analogous conditions for dimensions of metric spaces,
    such as the hyperbolic, uniform, asymptotic, or asymptotic Assouad--Nagata dimensions~\cite{assouad1982sur,brodskiy2005coarse,lang2005nagata,dadarlat2007uniform,dranishnikov2007asymptotic,buyalo2008hyperbolic,brodskiy2009assouad,dranishnikov2009cohomological,repovvs2010asymptotic,cencelj2013asymptotic,le2015assouad,dydak2016large,xia2019strong}.
    We have included some characterizations of the Assouad--Nagata dimension in~\cite[Propositon~2.5]{lang2005nagata} and~\cite[Proposition~2.2]{brodskiy2009assouad}. 
    We also point out that the~\ref{proposition:nagata:f}$\implies$\ref{proposition:nagata:g} part of the preceding proof resembles the proof of Lemma~3 in~\cite{brodskiy2005coarse}, where asymptotic and coarse dimensions are addressed, 
    and circumvents the explicit use of the nerve complex mapping as in~\cite[Proposition~2.5]{lang2005nagata}.
    Moreover,
    formulation~\ref{proposition:nagata:i} resembles a result that is originally due to Stone, which states that every open cover of a paracompact space has a $\sigma$-discrete locally finite refinement;~see~\cite[Lemma~5.1.16]{engelking1989general} and   also~\cite{burke1984covering}.

    Assouad~\cite{assouad1982sur} introduced the Assouad--Nagata dimension of a metric space 
    under the name Nagata dimension, in his reception of Nagata's work~\cite{nagata1960metric}. 
    Important results related to large-scale geometry were proven by Lang and Schlichenmaier~\cite{lang2005nagata} more than 20 years later.
The Assouad--Nagata dimension is at least the covering dimension~\cite[Section~5]{le2015assouad}. 
    Metrizable topological spaces of covering dimension at most $n$ 
    can be metrized to have Assouad--Nagata dimension at most $n$~\cite[(1.7)]{assouad1982sur}.

    Proposition~\ref{proposition:nagata} implies that any metric space $X$ with Assouad--Nagata dimension at most $n$ must also have asymptotic and uniform dimension at most $n$.
Here, the \emph{uniform dimension} and \emph{asymptotic dimension} are defined as follows~\cite[Section~3]{dydak2016large}.
    A metric space $X$ has \emph{uniform dimension} at most $n$
    if for all $L > 0$ there exists $l > 0$ such that 
    every open cover of $X$ with Lebesgue number $L$
    has an open refinement with Lebesgue number $l$ and multiplicity $n+1$.
    Similarly, $X$ has \emph{asymptotic dimension} at most $n$
    if for all $l > 0$ there exists $L > 0$ such that 
    every open cover of $X$ with Lebesgue number $L$
    has an open refinement with Lebesgue number $l$ and multiplicity $n+1$. 
\end{remark}

We characterize the Assouad--Nagata dimension via Lipschitz maps into a polyhedron. 
While this is analogous to one of the known definitions of the asymptotic dimension, 
the Assouad--Nagata again comes with a linear quantitative relationship.
A few preparations are in order. 
We have already established in Section~\ref{section:topology} that the vectorization of a pointwise finite partition of unity subordinate to a cover $\calU = (U_\alpha)_{\alpha\in\kappa}$
takes values in the vector space $\bbR^{(\kappa)}$. 
More specifically, the vectorization maps into a simplicial complex $\calK \subseteq \bbR^{(\kappa)}$ whose simplices have dimension at most $n$ and whose vertices are canonical unit vectors of $\bbR^{(\kappa)}$.
Consequently, the vectorization can be seen as mapping $X$ into $\calK$.

\begin{proposition}[{compare~\cite[Assertion~1]{bell2004asymptotic}}] 
    Let $X$ be a metric space. 
    Then $X$ has Assouad--Nagata dimension at most $n$ 
    if and only if 
    there exist a constant $c_\star > 0$ such that 
    for every $\ell > 0$ there exist a set $\kappa$ and $\Lambda : X \to \calK \subseteq \bbR^{(\kappa)}_{\ell^{2}}$ with Lipschitz constant $c_\star/\ell$
    into a simplicial complex $\calK$ of dimension at most $n$
    whose vertices are canonical unit vectors of $\bbR^{(\kappa)}$
    such that the preimage of every simplex of $\calK$ under $\Lambda$ has diameter at most $\ell$. 
    
    Letting $c_{d} > 0$ denote the constant in Proposition~\ref{proposition:nagata}, we may assume
    \begin{align*}
        \frac{1}{2}
		\left( n(n+1)\right)^{-\frac{1}{2}} 
\cdot c_{d}^{-1} 
        \leq 
        c_{\star} 
        \leq 
        \sqrt{2} n \cdot c_{d}^{-1}.
\end{align*}
\end{proposition}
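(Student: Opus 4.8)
The plan is to establish the two directions of the biconditional separately, in each case passing through the equivalent characterization of the Assouad--Nagata dimension in Proposition~\ref{proposition:nagata} --- concretely, through condition~\ref{proposition:nagata:d} --- and to track the multiplicative constants along the way. Throughout I will use that the vectorization of a pointwise finite partition of unity subordinate to a cover $\calU = (U_\alpha)_{\alpha\in\kappa}$ maps $X$ into the nerve complex $\calK \subseteq \bbR^{(\kappa)}$, whose vertices are canonical unit vectors and whose dimension is at most one less than the multiplicity of $\calU$.

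For the ``only if'' direction I would assume $X$ has Assouad--Nagata dimension at most $n$ and apply condition~\ref{proposition:nagata:d}: for each $\ell > 0$ there is an $\ell$-bounded open cover $\calU$ of $X$ with Lebesgue number $c_d\ell$ and multiplicity $n+1$. Take $\Lambda : X \to \calK$ to be the vectorization of the canonical partition of unity subordinate to $\calU$, which is well defined by Theorem~\ref{theorem:mainresult} with $p = 1$; since $\calU$ has multiplicity $n+1$, the complex $\calK$ has dimension at most $n$. Corollary~\ref{corollary:vectorizationestimate} with $q = 2$ and $\Mult = n+1$ bounds the Lipschitz constant of $\Lambda : X \to \bbR^{(\kappa)}_{\ell^2}$ by $\sqrt{2}\,\max(1,n)/(c_d\ell)$, which for $n \geq 1$ yields the asserted upper bound $c_\star \leq \sqrt{2}\,n\,c_d^{-1}$. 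For the preimage condition, if $\sigma$ is a simplex of $\calK$ with vertex set $J$, then every $x$ with $\Lambda(x)$ in the relative interior of $\sigma$ satisfies $\lambda_\alpha(x) > 0$ exactly for $\alpha \in J$, hence $x \in \bigcap_{\alpha\in J} U_\alpha$, so $\Lambda^{-1}$ of that open simplex lies in a single member of $\calU$ and has diameter at most $\ell$; for closed simplices one uses $\bigcap_{\alpha\in J} U_\alpha \neq \emptyset$ at the cost of at most a factor $2$.

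For the ``if'' direction I would fix such a family of maps and, given $\ell > 0$, pull back the open star cover of $\calK$ along $\Lambda$, obtaining $\calW = (\Lambda^{-1}(\operatorname{st}(e_\alpha)))_{\alpha\in\kappa}$. Three properties must be checked. First, $\calW$ has multiplicity at most $n+1$: a point of $\calK$ lies in the open stars of exactly the vertices of its carrier simplex, and $\dim\calK \leq n$. Second, $\calW$ has a Lebesgue number of order $\ell/((n+1)c_\star)$: for any $x$ the vector $\Lambda(x)$ has at most $n+1$ nonzero coordinates summing to $1$, so one of them is at least $1/(n+1)$, and the $(c_\star/\ell)$-Lipschitz property then sends a ball about $x$ of radius just below $\ell/((n+1)c_\star)$ into the corresponding open star; this produces the lower bound $c_d \gtrsim \tfrac12(n(n+1))^{-1/2} c_\star^{-1}$. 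Third, the members of $\calW$ must be controlled: $\Lambda^{-1}(\operatorname{st}(e_\alpha))$ is the union of the finitely many cells $\Lambda^{-1}(\mathring\sigma)$ over simplices $\sigma$ containing $e_\alpha$, each of diameter at most $\ell$ by hypothesis. Combining these, $\calW$ (after rescaling $\ell$) realizes one of the cover conditions of Proposition~\ref{proposition:nagata}, say condition~\ref{proposition:nagata:d} or its $\rho$-multiplicity variant~\ref{proposition:nagata:e}, so $X$ has Assouad--Nagata dimension at most $n$ with the stated constant.

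The main obstacle is the third step of the reverse implication: converting ``every simplex has a preimage of diameter $\leq \ell$'' into usable control of the pulled-back star cover, since a priori the cells that assemble into one vertex star could be scattered far apart in $X$. The Lipschitz bound limits how $\Lambda$ contracts distances, not how it expands preimages, so the preimage-of-simplices hypothesis has to carry this step; making it clean will likely require passing to a suitable subdivision of $\calK$, or targeting the $\rho$-multiplicity formulation of Proposition~\ref{proposition:nagata} instead of insisting the members themselves be small, and this is also where the precise shape of the factor $(n(n+1))^{-1/2}$ gets pinned down.
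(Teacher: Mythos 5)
Your forward direction coincides with the paper's: take the cover from condition~\ref{proposition:nagata:d}, use the canonical partition of unity and Corollary~\ref{corollary:vectorizationestimate} with $q=2$ and $\Mult=n+1$ to get the Lipschitz bound $\sqrt{2}\,n/(c_d\ell)$, map into the nerve complex, and bound simplex preimages through the cover members. One caveat: by conceding a factor $2$ for closed simplices you only certify diameter $2\ell$, so as written your construction does not deliver the statement verbatim (nor the constant $\sqrt 2\,n\,c_d^{-1}$) without rerunning it at scale $\ell/2$ or absorbing the factor; the paper instead asserts the inclusion $\Lambda^{-1}(K_\mu)\subseteq\bigcap_{\alpha\in\mu}U_\alpha$ outright and takes diameter $\le\ell$ from there.

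The genuine gap is where you yourself place it, in the third step of the converse. You pull back the open star cover, get multiplicity $n+1$ from $\dim\calK\le n$, and get a Lebesgue number of order $\ell/((n+1)c_\star)$ from the coordinate-$\ge 1/(n+1)$ argument (the paper instead uses the star height $\sqrt{(n+1)/n}$ of the standard simplex, which is exactly how the sharper factor $(n(n+1))^{-1/2}$ in the stated bound arises; your estimate alone gives a slightly weaker constant). But you then stop at the decisive point: converting the hypothesis that each simplex preimage has diameter $\le\ell$ into size or $\rho$-multiplicity control of the sets $\Lambda^{-1}(V_\alpha)$, and you only list candidate remedies without executing any. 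The paper closes this step by asserting that each $U_\alpha=\Lambda^{-1}(V_\alpha)$ is $2\ell$-bounded and then invoking condition~\ref{proposition:nagata:d} of Proposition~\ref{proposition:nagata}; your worry that the simplex-preimage pieces composing one vertex star could be scattered is precisely what must be argued away, so as submitted the converse is incomplete and the lower bound on $c_\star$ is not established. A concrete repair in the spirit of your second suggestion: do not use the star cover's members directly, but for each $k$ and each $k$-simplex $\sigma$ of $\calK$ take the open set of points of $X$ whose barycentric coordinates on the vertices of $\sigma$ all exceed a threshold while all remaining coordinates fall below a smaller threshold (mimicking the proof of \ref{proposition:nagata:f}$\implies$\ref{proposition:nagata:g} in Proposition~\ref{proposition:nagata}, carried out on the target complex). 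Each such set lies in the preimage of the closed simplex $\sigma$, hence has diameter at most $\ell$, and sets attached to distinct $k$-simplices are separated by a definite multiple of $\ell/c_\star$ because some barycentric coordinate must change by a fixed amount and $\Lambda$ is $(c_\star/\ell)$-Lipschitz; this yields condition~\ref{proposition:nagata:g} with explicit constants and completes the converse, but it needs to be written out before the proof can be considered complete.
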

\begin{proof}
    First, suppose that $X$ has Assouad--Nagata dimension at most $n$.
    So there exists $c > 0$ such that for every $\ell > 0$ 
    there exists an $\ell$-bounded cover $\calU = (U_\alpha)_{\alpha\in\kappa}$ of $X$ with multiplicity $n + 1$ and with Lebesgue number $c \ell$.
    By Corollary~\ref{corollary:vectorizationestimate}, there exists a strongly subordinate partition of unity 
    whose vectorization $\Lambda : X \to \bbR^{(\kappa)}_{\ell^{2}}$ has Lipschitz constant $\sqrt{2} n / (c \ell)$. 
We define a simplicial complex $\calK \subseteq \bbR^{(\kappa)}$ by the following selection criterion:
    if $\mu \subseteq \kappa$ such that there exists $x \in X$ with $\lambda_\alpha(x) > 0$ for all $\alpha \in \mu$,
    then $\calK$ contains the simplex, called $K_{\mu}$, of dimension $\card{\mu}-1$
    whose vertices are unit vectors $(e_{\alpha})_{\alpha\in\mu}$.
    Obviously, $\calK$ is a simplicial complex containing $\Lambda(X)$.
Given $\mu \subseteq \kappa$ and $K_{\mu} \in \calK$,
    we observe $\Lambda^{-1}(K_{\mu}) \subseteq \bigcap_{\alpha\in\mu} U_{\alpha}$.
    The former intersection has a diameter of at most $\ell$. 
    This shows one direction of the statement with $c_\star \leq \sqrt{2} n / c$.

    Second, suppose that 
    there exist $c_\star > 0$ such that 
    for every $\ell > 0$ there exist a set $\kappa$ and a $c_\star \ell^{-1}$-Lipschitz map $\Lambda : X \to \calK \subseteq \bbR^{(\kappa)}_{\ell^{2}}$
    into a simplicial complex $\calK$ of dimension at most $n$
    whose vertices are canonical unit vectors of $\bbR^{(\kappa)}$
    such that the preimage of every simplex of $\calK$ under $\Lambda$ has diameter at most $\ell$. 
Fixing such $\ell > 0$ and $\Lambda$,
	we let $\calV = (V_\alpha)_{\alpha\in\kappa}$ be the cover of $\calK$ by the open stars around the vertices. 
    $\calV$ has multiplicity $n+1$ and a Lebesgue number $\Leb_{n} \geq H/(n+1)$ since the simplices have height $H := \sqrt{n+1} / \sqrt{n}$ at every vertex. 
    We define an open cover $\calU = (U_\alpha)_{\alpha\in\kappa}$ of $X$ by setting $U_\alpha := \Lambda^{-1}(V_\alpha)$. 
Obviously, $\calU$ has multiplicity $n+1$ and is $2\ell$-bounded. Moreover, $\Lambda$ maps every open ball of radius $\Leb_n c_\star^{-1} \ell$ into an open ball of radius $\Leb_n$,
    which is included in a member of $\calV$. Hence, $\calU$ has Lebesgue number $\Leb_n c_\star^{-1} \ell$. 
    In summary, $X$ has Assouad--Nagata dimension at most $n$, and the other inequality follows. 
\end{proof}

To round up the discussion with a concrete example, 
we show that subsets of Euclidean space have finite Assouad--Nagata dimensions, 
and we provide explicit estimates for the relevant constants.

\begin{lemma}
    Let $X \subseteq \bbR^{n}$. 
    Every open cover $\calU = (U_{\alpha})_{\alpha\in\kappa}$ of $X$ with Lebesgue number $\Leb$
    has an open refinement $\calW = (W_{\alpha})_{\alpha\in\kappa}$ with Lebesgue number $((n+1)\sqrt{8n})^{-1} \Leb$ and multiplicity $n+1$,
    and $\calW$ has a subordinate partition of unity whose partial sums have Lipschitz constant $\sqrt{8n}/\Leb$.
\end{lemma}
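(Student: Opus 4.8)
The plan is to realise $\calW$ as a reindexing of the open star cover of a suitably scaled thick triangulation of $\bbR^{n}$, and to take for the partition of unity the grouped barycentric coordinate functions of that triangulation.

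First I would fix a triangulation $\calT$ of $\bbR^{n}$ by simplices of uniformly bounded aspect ratio --- a thick triangulation --- rescaled so that every simplex has diameter strictly below $\Leb/2$ while every altitude (the distance from a vertex to its opposite face) is at least $\Leb/\sqrt{8n}$; that both can be arranged at once is exactly what thickness provides, since for such a triangulation the mesh and the smallest altitude are comparable up to a factor of order $\sqrt{n}$. The open star cover $\calV=(\operatorname{st}(v))_{v}$, indexed over the vertices of $\calT$, covers $X$ and has multiplicity $n+1$, because each point of $\bbR^{n}$ lies in the relative interior of a unique simplex, which has at most $n+1$ vertices. Each closed star has diameter at most twice the mesh, hence below $\Leb$, so it lies inside some member of $\calU$; choosing for each vertex $v$ with $\operatorname{st}(v)\cap X\neq\emptyset$ an index $\alpha(v)\in\kappa$ with $\overline{\operatorname{st}(v)}\cap X\subseteq U_{\alpha(v)}$ and setting $W_{\alpha}:=\bigcup_{v:\alpha(v)=\alpha}\operatorname{st}(v)\cap X$ produces an open refinement $\calW=(W_{\alpha})_{\alpha\in\kappa}$ of $\calU$, still of multiplicity $n+1$, exactly as in the step \ref{proposition:nagata:d}$\implies$\ref{proposition:nagata:a} of Proposition~\ref{proposition:nagata}.

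For each vertex $v$ let $\phi_{v}\colon\bbR^{n}\to[0,1]$ be the barycentric coordinate at $v$: affine on each simplex, $1$ at $v$ and $0$ at the remaining vertices, extended by $0$ off $\overline{\operatorname{st}(v)}$. Then $\{\phi_{v}>0\}=\operatorname{st}(v)$, and on a simplex containing $v$ the gradient of $\phi_{v}$ has Euclidean norm $1/h$ with $h$ the altitude from $v$, so by the altitude lower bound $\phi_{v}$ is globally $(\sqrt{8n}/\Leb)$-Lipschitz. Given $x\in X$, the vertex $v$ of the carrying simplex of $x$ with the largest barycentric coordinate satisfies $\phi_{v}(x)\ge 1/(n+1)$, so $\dist\!\left(x,\bbR^{n}\setminus\operatorname{st}(v)\right)\ge\phi_{v}(x)\cdot\Leb/\sqrt{8n}\ge\Leb/\bigl((n+1)\sqrt{8n}\bigr)$; hence the open ball of that radius about $x$ lies in $\operatorname{st}(v)\cap X\subseteq W_{\alpha(v)}$, giving the claimed Lebesgue number of $\calW$. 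The functions $\lambda_{\alpha}:=\sum_{v:\alpha(v)=\alpha}\phi_{v}$ form a locally finite, hence continuous, partition of unity strongly subordinate to $\calW$, and any partial sum $\lambda_{\mu}$ equals a sum $\sum_{v\in S}\phi_{v}$ over some set $S$ of vertices. On a simplex $\sigma$ with vertices $v_{0},\dots,v_{k}$ its gradient is the subset sum $\sum_{v_{i}\in S}\nabla\phi_{v_{i}}$, which by the relation $\sum_{i}\nabla\phi_{v_{i}}=0$ equals the negative of the complementary subset sum.

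The crux, and the step I expect to be the main obstacle, is bounding these subset sums $\bigl\lVert\sum_{v_{i}\in S}\nabla\phi_{v_{i}}\bigr\rVert$ by $\sqrt{8n}/\Leb$ on every simplex, rather than by the useless $\sum_{i}\lVert\nabla\phi_{v_{i}}\rVert$. One uses that a thick simplex is close to non-obtuse, so the Gram matrix of $(\nabla\phi_{v_{i}})_{i}$ has non-positive off-diagonal entries and $\bigl\lVert\sum_{v_{i}\in S}\nabla\phi_{v_{i}}\bigr\rVert^{2}\le\min\bigl(\sum_{v_{i}\in S}h_{i}^{-2},\,\sum_{v_{i}\notin S}h_{i}^{-2}\bigr)\le\tfrac12\sum_{i}h_{i}^{-2}$; taking the maximum over simplices then bounds the Lipschitz constant of every partial sum. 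The real work is to produce a triangulation of $\bbR^{n}$ --- and pin down its scale --- so that mesh $<\Leb/2$ (so $\calW$ refines $\calU$), every altitude is $\ge\Leb/\sqrt{8n}$ (so the Lebesgue number is as claimed), and $\tfrac12\sum_{i}h_{i}^{-2}\le 8n/\Leb^{2}$ on each simplex (so the partial sums are controlled) hold simultaneously; this is precisely the balance that a thick Euclidean triangulation is designed to strike, with the unavoidable aspect-ratio loss of order $\sqrt{n}$ absorbed into the constant $\sqrt{8n}$. The one routine leftover is the passage between strict and non-strict inequalities when fitting closed stars into members of $\calU$, handled by an arbitrarily small further rescaling.
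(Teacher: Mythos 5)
Your construction is essentially the paper's: the paper also takes the scaled Freudenthal--Kuhn triangulation, covers $X$ by the open stars of its vertices, groups the stars through a choice $\alpha(z)$ to obtain $\calW$ and the subordinate partition of unity, and gets the Lebesgue number from $\lambda_z(x)\ge 1/(n+1)$ for some vertex together with the minimum altitude $H=\Leb/\sqrt{8n}$. Up to that point your argument is sound, and your lower bound $\dist(x,\bbR^n\setminus\operatorname{st}(v))\ge \phi_v(x)H$ via the Lipschitz bound on $\phi_v$ is if anything cleaner than the paper's convexity argument.

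The gap is at the step you yourself flag as the crux, and your patch does not close it. Granting the sign condition on the Gram matrix, your inequality under your own altitude hypothesis $h_i\ge\Leb/\sqrt{8n}$ only yields $\tfrac12\sum_i h_i^{-2}\le 4n(n+1)/\Leb^2$, i.e.\ a partial-sum Lipschitz bound of order $n/\Leb$, not $\sqrt{8n}/\Leb$. To reach $\sqrt{8n}/\Leb$ you impose the far stronger requirement $\tfrac12\sum_i h_i^{-2}\le 8n/\Leb^2$ together with mesh $\le\Leb/2$; since every altitude is at most the diameter, this forces all altitudes to be a fixed fraction (roughly a quarter) of the diameter, i.e.\ uniformly near-regular simplices, independently of $n$. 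You give no such triangulation, and the natural candidate does not have this property: for the Freudenthal--Kuhn triangulation scaled to diameter $\Leb/2$ the altitudes are $\Leb/(2\sqrt n)$ (twice) and $\Leb/\sqrt{8n}$ ($n-1$ times), so $\tfrac12\sum_i h_i^{-2}=4n^2/\Leb^2$, which exceeds $8n/\Leb^2$ for all $n\ge3$; concretely, summing the hat functions of every other vertex along the diagonal chain of one Kuhn simplex gives a partial sum whose gradient there has norm of order $2n/\Leb$ (for $n=3$ it is proportional to $(1,-1,1)$, of norm $6/\Leb>\sqrt{24}/\Leb$). Moreover, ``thick, hence close to non-obtuse, hence non-positive Gram off-diagonals'' is not a valid implication; the sign condition does hold for Kuhn simplices because they are orthoschemes, but you neither use that nor fix the triangulation. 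So the claimed Lipschitz constant for arbitrary partial sums is not established by your argument. For what it is worth, this is exactly where the paper's own proof is thinnest: it asserts the $H^{-1}$ bound for every $\lambda_Z=\sum_{z\in Z}\lambda_z$ from the minimum altitude alone, and the computation above shows that no per-simplex estimate of that naive kind can give it for $n\ge3$. A complete proof would require either a sharper treatment of subset sums of Freudenthal--Kuhn hat-function gradients (or control over which subsets $Z$ can arise from the grouping), or a triangulation of $\bbR^n$ by uniformly near-regular simplices, which you have not produced.
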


\begin{proof}
    To begin with, let $\ell > 0$.
    We write $\bbT_{\ell}$ for the Freudenthal--Kuhn triangulation of $\bbR^{n}$ scaled to length $\ell/2$.
    This is a triangulation of $\bbR^{n}$ which consists of $n$-dimensional simplices of diameter at most $D := \ell/2$ and with minimum height at least $H := \ell / \sqrt{8n}$. The set of vertices of this triangulation is $\bbV_{\ell} := (\ell/2) \bbZ^{n}$.
    We refer to the literature~\cite{freudenthal1942simplizialzerlegungen,bey2000simplicial,kachanovich2019maillage} for more details.
For every vertex $z \in \bbV_{\ell}$ of the scaled Freudenthal--Kuhn triangulation, 
    $\lambda_z : \bbR^{n} \rightarrow \bbR$ denotes the non-negative function that is affine over each simplex of $\bbT_{\ell}$,
    that satisfies $\lambda_z(z) = 1$, and that vanishes at all other vertices of $\bbT_{\ell}$.
    Moreover, we let $C_z$ be the open star of $z \in \bbV_{\ell}$ within the triangulation,
    which coincides with the set of points over which $\lambda_z$ is positive. 
    The set $C_z$ is convex and open, contains the open ball around $z$ of radius $H$, and its diameter is at most $2D$.

    Suppose that $x \in \bbR^{n}$ with $\lambda_z(x) = \lambda \in (0,1]$.
    Then $x \in C_z$. 
    The ray starting from $z$ and going through $x$ hits the boundary of $C_{z}$ at some point $x' \in \partial C_z$.
Then $x = z + (1-\lambda)( x' - z )$. 
    Since $C_z$ is convex, 
    \begin{align*}
        \lambda( C_z - z ) + x
        &=
        \lambda( C_z - z ) + z + (1-\lambda)( x' - z )
=
        \lambda C_z  + (1-\lambda) x' 
        .
    \end{align*}
    Since $\lambda > 0$, we conclude 
    $\lambda( C_z - z ) + x \subseteq C_z$.
    But then 
$\Ball\left( x, \lambda_z(x) H \right) \subseteq C_z$.
Since the family $(\lambda_z)_{z \in \bbV_{\ell}}$ is a partition of unity,
    and every point is contained within the support of at most $n+1$ of these partition of unity functions, 
    for every $x \in \bbR^n$ we can find $z \in \bbV_{\ell}$ such that $\lambda_z(x) \geq \frac{1}{n+1}$.
    In particular, for every $x \in \bbR^n$ there exists $z \in \bbV_{\ell}$ such that 
$\Ball\left( x, (n+1)^{-1}H \right) \subseteq C_z$.
In summary, 
$\calV_{\ell} := \left\{ C_z \suchthat* z \in \bbV_{\ell} \right\}$
is an $\ell$-bounded open cover of $\bbR^n$ with Lebesgue number $H / (n+1)$.
    For each $x \in X$ there at most $n+1$ distinct $z \in \bbV_{\ell}$ such that $x \in C_z$, and so $\calV_{\ell}$ has multiplicity $n+1$. 
    
    Suppose that $\calU$ is an open cover of $X$ with Lebesgue number $\Leb$ and consider the case $\ell = \Leb$. 
    When $z \in \bbV_{\Leb}$ with $C_z \cap X \neq \emptyset$,
    then we can fix $\alpha(z) \in \kappa$ with $C_{z} \cap X \subseteq U_{\alpha}$. 
    For each $\alpha \in \kappa$, we define 
\begin{align*}
        W_{\alpha}       := \bigcup\left\{ C_z \cap X \suchthat* z \in \bbV_{\Leb}, C_z \cap X \neq \emptyset, \alpha(z) = \alpha \right\},
        \qquad 
        \lambda_{\alpha} := \inlinesum_{ z \in \bbV_{\Leb}, C_z \cap X \neq \emptyset, \alpha(z) = \alpha } \lambda_{z}.
    \end{align*}
    For any $x \in X$, we have $\Ball(x,H/(n+1)) \subseteq C_z$ for some $z \in \bbV_{\Leb}$ that satisfies $C_z \cap X \neq \emptyset$,
    and hence $\Ball(x,H/(n+1)) \subseteq W_{\alpha(z)}$. 
    So $\calW = (W_{\alpha})_{\alpha\in\kappa}$ is an open refinement of $\calU$ with multiplicity $n+1$ with Lebesgue number $H/(n+1)$. 
    The family $\Lambda := (\lambda_{\alpha})_{\alpha\in\kappa}$ is a locally finite partition of unity subordinate to $\calW$.
    Any partial sum $\lambda_{Z} := \inlinesum_{z \in Z} \lambda_{z}$ with $Z \subseteq \bbV_{\Leb}$ has Lipschitz constant $H^{-1} = \sqrt{8n} / \Leb$.
\end{proof}

\begin{remark}
The partition of unity found in the proof above consists of functions with Lipschitz constant $\sqrt{8n} / \Leb$,
    exploiting the special situation of Euclidean space.
    Notably, this is better than the general upper bound $(n+1)n\sqrt{8n} / \Leb$ guaranteed by our main result.  
\end{remark}

\end{document}